\newcommand\beq[1]{ \begin{equation}\label{#1} }
\newcommand{\eeq}{ \end{equation} }
\newcommand\beqa[1]{ \begin{eqnarray} \label{#1}}
\newtheorem{theorem}{Theorem}
\newtheorem{meta-thm}[theorem]{Meta-Theorem}
\newtheorem{lemma}[theorem]{Lemma}
\newtheorem{algorithm}[theorem]{Algorithm}
\newtheorem{remark}[theorem]{Remark}
\newtheorem{definition}[theorem]{Definition}
\newcommand{\eeqa}{ \end{eqnarray} }
\newcommand{\beqano}{ \begin{eqnarray*} }
\newcommand{\eeqano}{ \end{eqnarray*} }
\newcommand\equ[1]{{\rm (\ref{#1})}}
\def\u{\underline}
\def\ep{\varepsilon}
\def\P{{\mathcal P}}
\def\L{{\mathcal L}}
\def\M{{\mathcal M}}
\def\P{{\mathcal P}}
\def\integer{{\mathbb Z}}
\def\nat{{\mathbb N}}
\def\real{{\mathbb R}}
\def\torus{{\mathbb T}}
\title[Breakdown of tori in standard maps]
{ {Breakdown of rotational tori in 2D and 4D conservative and
dissipative standard maps}}
\author[A.P. Bustamante]{Adrian P.  Bustamante}
\address{
Department of Mathematics, University of Roma Tor Vergata, Via
della Ricerca Scientifica 1, 00133 Roma (Italy)}
\email{bustamante@mat.uniroma2.it}
\author[A. Celletti]{Alessandra Celletti}
\address{
Department of Mathematics, University of Roma Tor Vergata, Via
della Ricerca Scientifica 1, 00133 Roma (Italy)}
\email{celletti@mat.uniroma2.it}
\author[C. Lhotka]{Christoph Lhotka}
\address{
Department of Mathematics, University of Roma Tor Vergata, Via
della Ricerca Scientifica 1, 00133 Roma (Italy)}
\email{lhotka@mat.uniroma2.it}
\thanks{Corresponding author: \sl E-mail address: \rm celletti@mat.uniroma2.it}
\thanks{A.C. and C.L. acknowledge EU H2020 MSCA ETN Stardust-R Grant Agreement 813644.
A.B. and C.L. acknowledge the MIUR Excellence Department Project
awarded to the Department of Mathematics, University of Rome Tor
Vergata, CUP E83C23000330006, and the project MIUR-PRIN 20178CJA2B
``New Frontiers of Celestial Mechanics: theory and Applications''.
C.L. acknowledges GNFM/INdAM}
\begin{document}
\maketitle

\baselineskip=18pt              

\begin{abstract}
We study the breakdown of rotational invariant tori  {in 2D and 4D
standard maps} by implementing three different methods. First, we
analyze the domains of analyticity of a torus with given frequency
through the computation of the Lindstedt series expansions of the
embedding of the torus and the drift term. The Pad\'e and
log-Pad\'e approximants provide the shape of the analyticity
domains by plotting the poles of the polynomial at the denominator
of the Pad\'e approximants.  {Secondly}, we implement a Newton
method to construct the embedding of the torus; the breakdown
threshold is then  {estimated} by looking at the blow-up of the
Sobolev norms of the embedding. Finally,  {we implement an
extension of Greene method to get information on} the breakdown
threshold of an invariant torus with irrational frequency by
looking at the stability of the periodic orbits with periods
approximating the frequency of the torus.

We apply these methods to 2D and 4D standard maps. The 2D maps can
either be conservative (symplectic) or dissipative ( {more}
precisely, conformally symplectic, namely a dissipative map with
the geometric property to transform the symplectic form into a
multiple of itself). The conformally symplectic maps depend on a
dissipative parameter and a drift term, which is needed to get the
existence of invariant attractors. The 4D maps are obtained
coupling $(i)$ two symplectic standard maps, or $(ii)$ two
conformally symplectic standard maps, or $(iii)$ a symplectic and
a conformally symplectic standard map.

 {Concerning the results, Pad\'e and Newton methods perform
well and provide reliable and consistent results (although we
implemented Newton method only for symplectic and conformally
symplectic maps). Our implementation of the extension of Greene
method is inconclusive, since it is computationally expensive and
delicate, especially in 4D non-symplectic maps, also due to the
existence of Arnold tongues.}
\end{abstract}

\noindent \bf Keywords. \rm Symplectic systems, Conformally
symplectic systems, Dissipative systems, Standard map, Invariant
tori, Periodic Orbits, Pad\'e approximants, Greene method,
Lindstedt series, Newton method,  {Arnold tongues}.


\section{Introduction}\label{sec:model}

\subsection{The goal of the work}
As a motivation for studying  {2D and 4D conservative and
dissipative maps, we mention that in low-dimensional systems (e.g.
Hamiltonian systems with less or equal than 2 d.o.f.) invariant
rotational tori provide a strong stability property, since they
divide the phase space into invariant regions. In higher
dimensional systems, the confinement is no more valid and the
orbits can go around the invariant tori, a well-studied phenomenon
known as \sl Arnold diffusion \rm (\cite{Arnold64}). In Celestial
Mechanics, an example of low-dimensional system described by a
Hamiltonian with 2 d.o.f. is given by the planar, circular,
restricted three-body problem. Taking a surface of section on a
given energy level, one obtains a 2D map. Releasing the assumption
of circular orbits and allowing the orbits of the primaries to be
elliptic, one obtains that the motion takes place in a 6D phase
space}. Taking a surface of section on a given energy level, one
obtains a 4D map.

It is worth mentioning that dissipative effects in Celestial
Mechanics are found at different  {size scales}; we just quote
Poynting-Robertson drag on small particles, the effect of the
atmosphere on Earth's satellites, tidal torques on planetary
satellites. Such effects greatly contribute to shaping the
 {evolution} of objects in the solar system.

Given this physical motivation, the breakdown of invariant tori
 {(namely, tori whose dynamics is smoothly conjugated to a
rotation)} in conservative and dissipative 2D and 4D maps is the
core subject of the current work. Before proceeding, let us
specify what we mean by breakdown of invariant tori. The maps we
are going to study are nearly-integrable and depend on a
perturbing parameter, say $\varepsilon$, such that the map is
integrable when $\varepsilon=0$ and it is non-integrable when
$\varepsilon\not=0$. An invariant torus with a fixed irrational
frequency (precisely, Diophantine), say $\omega$, exists in the
integrable case. The torus gets deformed when $\varepsilon$
increases, until a critical value, say $\varepsilon_c$, is reached
at which  {point} the invariant torus breaks down. We will
implement different methods to compute $\varepsilon_c$.  {We
notice that in the dissipative case, one needs to introduce a
drift parameter (see \cite{Moser67}, \cite{BroerHS96},
\cite{CallejaC10}).}

\subsection{Maps and frequencies}
 {We will consider conservative and dissipative maps. In particular,}
we start from the 2D standard map introduced by Chirikov in
\cite{Chirikov79}, which is an area-preserving map,  {and hence}
symplectic; we study also its dissipative version, obtained adding
a dissipative parameter and a drift term. We will consider also 4D
maps by coupling two 2D symplectic maps, thus obtaining the 4D
symplectic map,  {known as the \sl Froeschl\'e map\rm}. Next, we
consider the coupling of a symplectic and a dissipative 2D map,
providing what we call a \sl mixed \rm 4D map, depending on a
dissipative parameter and a 1D drift term. Finally, we consider
the coupling of two 2D dissipative maps, which generate a
dissipative 4D map, depending on two dissipative parameters and
two drift terms.  {When the dissipative parameters are equal, the
map is said conformally symplectic (\cite{CallejaCdlL2013}), which
means that it is a dissipative map} with the property that it
transforms the symplectic form into a multiple of itself.

We will mainly consider the classical standard map with only one
harmonic, but we will also provide a few results for maps with two
harmonics.

In the 2D maps, the 1D invariant tori are labeled by a scalar
irrational frequency. KAM theory (\cite{Kolmogorov54},
\cite{Arnold63a}, \cite{Moser62})  {assumes that} the irrational
frequency satisfies a Diophantine condition. One can construct
rational approximants to the frequency by taking the truncations
of the continued fraction expansion of the irrational number. Each
rational approximant corresponds to a periodic orbit, that
approximates better the invariant torus as the period gets longer.
In the 4D maps, the 2D invariant tori are labeled by a frequency
vector with two components; in this case, the construction of the
rational approximants is more elaborate, see, e.g.,
\cite{KimOstlund}, \cite{Schweiger1990}, \cite{Tompaidis96},
\cite{Tompaidis96bis}. We will implement the so-called
Jacobi-Perron algorithm, which gives an explicit procedure to
construct rational approximants to the 2D frequency vector of the
4D symplectic, mixed, dissipative standard maps.

\subsection{Computing the breakdown threshold}
 {In this work, we implement three different methods (to which
we refer as Pad\'e approximants, Newton method, Greene method) to
determine the breakdown threshold of rotational invariant tori.}
Let us briefly detail such methods.

Invariant tori are represented by an embedding function and
 {one needs to compute also} a drift term in the dissipative
case. The embedding and the drift can be expanded in Lindstedt
series, in powers of the perturbing parameter. The terms of the
Lindstedt series can be obtained through recursive formulae as
solutions of suitable cohomological equations. We compute these
terms up to a given order and then we determine the Pad\'e
approximants of the truncated series,  {namely polynomials whose
quotients have the same Taylor expansions as the truncation} (see
\cite{BustamanteC1}, \cite{BustamanteC2} for computations
concerning the dissipative 2D standard map). The locations of the
zeros of the polynomial at the denominator provide  {an
approximation of} the analyticity domain of the invariant tori in
the complex parameter plane. Pioneer works on the computation of
Lindstedt series, analyticity domains and breakdown of tori are
provided by \cite{GreeneP81}, where the 2D standard map is
studied, and \cite{BolltM}, \cite{KM}, where the 4D semi-standard
map is analyzed. The shape and size of the analyticity domain
strongly depends on the nature of the map and the characteristics
of the frequency. We compute also the logarithmic Pad\'e
approximants (\cite{Lla-Tom-95}), which complement the results
obtained by computing the Pad\'e approximants. Finally, we
determine the radii of convergence of the Lindstedt series, which
give  {a lower bound} of the breakdown threshold of the invariant
tori.

The Newton method allows us to construct an invariant torus for
symplectic and conformally symplectic maps through a quadratically
convergent procedure.  {In conformally symplectic systems,}
starting from the invariance equation for the embedding and the
drift, the Newton method relies on the so-called \sl automatic
reducibility \rm (\cite{LlaveGJV05}, \cite{CallejaCdlL2013}),
according to which, in the vicinity of an invariant torus, there
exists  {an explicit} change of coordinates that transforms the
linearization of the invariance equation for the torus into a
constant coefficient equation. We implement an explicit algorithm,
borrowed from \cite{CallejaCdlL2013}, to implement a Newton method
to determine the embedding and the drift. Then, we compute the
corresponding Sobolev norms,  {whose blow-up provides} an estimate
of the breakdown threshold of a torus with fixed frequency. In
fact, as already noticed and implemented in \cite{CallejaC10},
\cite{CCGL20c} (see also \cite{CCGL20a}, \cite{CCGL20b}), if the
torus exists, the Sobolev norms  {change} with the perturbing
parameter;  {but} when approaching the critical threshold of the
perturbing parameter, the Sobolev norms blow up. This behavior
allows us to  {approximate} the critical value of the perturbing
parameter associated to the symplectic and conformally symplectic
2D and 4D maps.  {Rigorous results behind this method can be found
in \cite{CallejaL10}, \cite{CallejaCdlL2013}. We remark that for
conformally symplectic systems the tori persist as normally
hyperbolic invariant manifolds (\cite{CCFL14}), so that the
breakdown of the tori coincides with the breakdown of the
conjugacy to rotations or the breakdown of normal hyperbolicity.}

A technique to determine the critical threshold by looking at the
behavior of the approximating periodic orbits was developed
 {by J. Greene} in \cite{Greene1979}, which contains an
implementation for the symplectic 2D standard map (see
 {\cite{FalcoliniL92}, \cite{McKay92} for a partial
justification of Greene method} and \cite{FoxM13}, \cite{MeissFox}
for an application to volume-preserving maps). However,  {the
symplectic standard} map is very peculiar, since it can be written
as the product of two involutions; as a consequence, the periodic
orbits can be determined  {among} the fixed points of one of these
involutions. Unfortunately, this decomposition into involutions
does not happen in the dissipative case, a fact that complicates
enormously the determination of the periodic orbits, as already
noticed in \cite{CCGL20c},  {requiring more computational time
than Pad\'e method or Sobolev criterion.} Despite this
complication, we  {attempt} to determine periodic orbits also in
the dissipative case, at the expenses of a big computational
effort. Then, we implement  {an extension of Greene method, which
is based on the study of the linearization of the periodic orbits
with frequency close to the rotation number of the invariant
torus. Besides the works \cite{FalcoliniL92}, \cite{McKay92}
providing pioneer results to justify the symplectic case, a
partial justification of an extension of Greene method for
conformally symplectic and dissipative systems is given in
\cite{CCFL14}, where it is proved that if a KAM torus exists, then
one can get information on the spectrum of the periodic orbits
with nearby frequencies; the proof requires adjusting parameters
and provides information also on the width of the Arnold tongues.}

\subsection{Content of the work}
In Section~\ref{sec:circ} we introduce the definitions of
symplectic and conformally symplectic systems; in
Section~\ref{sec:SM} we give explicit expressions for the 2D and
4D maps that we are going to study; in
Section~\ref{sec:frequencies} we discuss the rational
approximation to the irrational frequencies; in
Section~\ref{sec:invariant} we give the definition of invariant
tori; we provide in Section~\ref{sec:Lindstedt} the Lindstedt
series expansions of the hull functions representing the tori  and
we compute the Pad\'e approximants to study the analyticity
domains of the tori; Newton method to construct the invariant tori
is presented in Section~\ref{sec:newton} together with an
application of Sobolev criterion;  {some results about} periodic
orbits approximating the tori are described in
Section~\ref{sec:periodic}.

\section{Symplectic and conformally symplectic
systems}\label{sec:circ}

We consider a discrete system $f$ defined on a symplectic manifold
$\M=B\times\torus^{ {d}}$, where $B\subseteq\real^{ {d}}$ is an
open, simply connected domain with smooth boundary and $ {d}\geq
1$ denotes the number of degrees of freedom of the system. We
assume that $\M$ is endowed with a symplectic form $\Omega$; then,
for any vectors $\u u,\u v\in\real^{ {d}}$, one has \beq{omegax}
\Omega_{\u x}(\u u,\u v)=(\u u, {J}\u v)\ , \eeq  {where we assume
that $J$, the matrix representing $\Omega$ at $\u x$, is a
constant matrix, as it will be the case for the models studied in
this work (see Section~\ref{sec:SM}).} Symplectic and conformally
symplectic maps have specific geometric properties as given by the
following definition.




\begin{definition}\label{def:CS}
A diffeomorphism $f:\M\rightarrow\M$ is conformally symplectic, if there exists a function $\lambda:\M\rightarrow\real$ such that
\beq{CS}
f^*\Omega=\lambda\Omega\ ,
\eeq
 {where $f^*$ denotes the pull-back of $f$. The diffeomorphism is symplectic
if $\lambda=1$.}
\end{definition}

It is known (see, e.g., \cite{CallejaCdlL2013}) that for $ {d}\geq
2$, any function $\lambda$ satisfying \equ{CS} must be constant.
In the following, we will always consider $\lambda$ constant.

The condition \equ{CS} is equivalent to
$$
\Omega_{f(\u x)}(Df(\u x)\u u,Df(\u x)\u v)=\lambda \Omega_{\u
x}(\u u,\u v)\ ,\qquad \forall \u u,\u v\in\real^{ {d}}\ .
$$
Recalling \equ{omegax}, we have
$$
(Df(\u x)\u u,\  {J} Df(\u x)\u v)=\lambda\ (\u u, {J}\u v)\ ,
$$
which must be valid for any $\u u$, $\u v$, thus yielding
$$
Df(\u x)^T\  {J}\ Df(\u x)=\lambda  {J}\ .
$$

As remarked in \cite{CallejaCdlL2013}, Definition \ref{def:CS} can be
generalized to the case in which the phase space can be written as
the product of $j\geq 2$ manifolds, say
$$
\M=\M_1\times...\times\M_j
$$
with associated symplectic forms $\Omega_1$, ..., $\Omega_j$, such
that $\Omega=\Omega_1\otimes ...\otimes \Omega_j$. Then, \equ{CS}
is replaced by the generalized condition \beq{Lij} f^*\Omega =
\lambda_1\Omega_1\otimes ...\otimes \lambda_j\Omega_j \eeq with
{$\lambda_1,...,\lambda_j$} constants. Motivated by this remark,
we introduce the following notion of systems with \sl mixed \rm
symplectic and conformally symplectic properties; for short, we
will refer to them as \sl mixed systems. \rm

\begin{definition}\label{def:mixed}
Let us consider a domain $\M$ which can be written as
$\M=\M_1\times...\times\M_j$ with $j\geq 2$ and let $\Omega_1$,
..., $\Omega_j$ be the symplectic forms on $\M_1$, ..., $\M_j$.
Let $f:\M\rightarrow\M$ be a diffeomorphism with conformal factors
$\lambda_1$, ..., $\lambda_j$ as in \equ{Lij}. We say that $f$
represents a mixed system, if there exists an index $1\leq k<j$,
such that \beq{SCS} \lambda_1,...,\lambda_k\not=1\ , \qquad
\lambda_{k+1}=...=\lambda_j=1\ . \eeq
\end{definition}

 {We will refer to  \sl  dissipative \rm systems, when not all $\lambda_1$, ..., $\lambda_d$
are equal for $d\geq 2$, and they all are different from 1.}

In the case of dissipative, conformally symplectic and mixed
systems, we will consider a family of maps $f_{\u \mu}$, where $\u
\mu\in\real^{ {d}}$ is called the \sl drift  {parameter}. \rm The
reason for introducing extra parameters is that in the symplectic
case, without the need of parameters, there exist invariant tori
with different frequencies, while in the conformally symplectic
case one needs to add a drift parameter vector to have the
existence of invariant tori  {of prescribed frequency}. When
considering a family of maps $f_{\u \mu}$, we say that $f_{\u
\mu}$ is conformally symplectic  {if it satisfies \equ{CS}.}
Similarly, we extend to the family $f_{\u \mu}$ the
Definition~\ref{def:mixed} of mixed systems, whenever \equ{SCS} is
satisfied. Examples of symplectic, mixed, conformally symplectic
maps are given in Section~\ref{sec:SM}.

\section{Standard maps}\label{sec:SM}
 {In this Section, we introduce 2D conservative
and dissipative standard maps, and 4D symplectic, mixed, dissipative, and
conformally symplectic standard maps.}

\subsection{Symplectic and conformally symplectic 2D standard maps}
The conformally symplectic 2D standard map is defined by the discrete set of
equations \beqa{DSM2}
y_{n+1}&=&\lambda y_n+\mu+\ep V(x_n)\nonumber\\
x_{n+1}&=&x_n+y_{n+1}\ , \eeqa with $y_n\in\real$, $x_n\in\torus$,
{$\varepsilon\geq 0$ is the perturbing parameter, $0<\lambda\leq
1$ is the conformal factor}, $\mu\in\real$ is the drift parameter,
and $V$ is a regular, periodic function. The determinant of the
Jacobian of \equ{DSM2} is equal to $\lambda$. Hence, when
$\lambda=1$ and $\mu=0$, one obtains the symplectic standard map
(\cite{Chirikov79}), while for $\lambda>0$ and $\mu\not=0$, one
obtains the conformally symplectic standard map.

\subsection{ {Symplectic, conformally symplectic, mixed and dissipative 4D standard maps}}
 {We consider the map} described by the following equations for $y_n,w_n\in\real$, $x_n,z_n\in\torus$:
 {
\beqa{SMS}
y_{n+1}&=&\lambda_1 y_n+\mu_1+\ep {{\partial W}\over {\partial x}}(x_n,z_n;\gamma)\nonumber\\
x_{n+1}&=&x_n+y_{n+1}\nonumber\\
w_{n+1}&=&\lambda_2 w_n+\mu_2+\ep {{\partial W}\over {\partial z}}(x_n,z_n;\gamma)\nonumber\\
z_{n+1}&=&z_n+w_{n+1}\ ,
\eeqa}
where $\ep\geq0$ is the perturbing
parameter, $\gamma\geq0$ is the coupling parameter,
 {$0<\lambda_1,\lambda_2\leq 1$ are the conformal factors, $\mu_1$, $\mu_2\in\real$
are the drift parameters, the function $W$ is periodic in} $x_n$,
$z_n$, with zero average in both arguments. The system \equ{SMS}
is obtained by coupling two 2D standard maps;  {its Jacobian
amounts to $\lambda_1\lambda_2$.}  {We will often consider the
following expression for the function $W$ entering in \equ{SMS}:}
\beq{W}  {W}(x,z;\gamma)=-\cos x-\cos z-\gamma \cos(x-z)\ . \eeq

 {
We remark that the map \equ{SMS} is:
\begin{description}
    \item[$(i)$] symplectic, if $\lambda_1=\lambda_2=1$, $\mu_1=\mu_2=0$;
    \item[$(ii)$] conformally symplectic, if $\lambda_1=\lambda_2>0$ and $\mu_1,\mu_2\in\real$;
    \item[$(iii)$] mixed, if $\lambda_1>0$, $\lambda_2=1$, $\mu_1\in\real$, $\mu_2=0$;
    \item[$(iv)$] dissipative (contractive), if $0<\lambda_1\not=\lambda_2<1$ and $\mu_1,\mu_2\in\real$.
\end{description}
}

\begin{remark}\label{rem:omega}
For  {practical purposes, we} mention that the frequency vector
$\u\omega=(\omega_1,\omega_2)$ associated to an orbit of \equ{SMS}
can be  {approximated by the quantities
$$
\omega_1={1\over {N_0}}\sum_{j=1}^{N_0} y_j\ , \qquad
\omega_2={1\over {N_0}}\sum_{j=1}^{N_0} w_j
$$
for $N_0\in\integer_+$ sufficiently large.}
\end{remark}

\section{Diophantine vectors and best approximations}\label{sec:frequencies}

As it is well known, the existence of invariant tori can be proved
by KAM theory, provided a condition on the frequency vector, say
$\u\omega\in\real^{ {d}}$, is satisfied. More precisely, {KAM
theory requires} that $\u\omega$ satisfies a Diophantine
condition, which is introduced as follows.

\begin{definition}
The frequency vector $\u\omega\in\real^{ {d}}$ is said to satisfy
the Diophantine condition, if \beq{DCD} |\u\omega\cdot\u
m_1+m_2|^{-1}\leq \nu^{-1}|\u m_1|^\tau\ ,\qquad \u
m_1\in\integer^{ {d}}\backslash\{\u 0\}\ , \ \ m_2\in\integer \eeq
for some constants $0<\nu\leq 1$, $\tau\geq 1$.
\end{definition}

For $\tau> {d}-1$, the set of Diophantine vectors has full
Lebesgue measure in $\real^{ {d}}$.  {For the 2D maps, the
frequency is a scalar number, while for the 4D map the frequency
is a 2D vector, say $\u\omega=(\omega_1,\omega_2)$. By Liouville
theorem (\cite{Khinchin}), algebraic numbers satisfy the
Diophantine condition with $d=1$. We will consider the golden
ratio $\varpi={{\sqrt{5}-1}\over 2}$, which is the inverse of the
solution  of the quadratic equation $\varpi^2-\varpi-1=0$. We also
introduce the spiral mean $s\simeq 1.32472$, which is the solution
of the cubic equation $s^3-s-1=0$, and the number $\tau\simeq
1.83929$ as the solution of the cubic polynomial
$\tau^3-\tau^2-\tau-1=0$.}

 {
For the 2D maps, we mainly consider the frequencies $\varpi$ and $s^{-1}$. For the 4D maps,
we select the following 2D vectors:
\beqa{vectors}
\u\omega_s&=&(s-1,s^{-1})\simeq (0.3247,0.7549)\ ,\nonumber\\
\u\omega_u&=&(s^{-1},s-1)\simeq (0.7549,0.3247)\ ,\nonumber\\
\u\omega_\tau&=&(\tau^{-1},\tau-1)\simeq (0.5437,0.8393)\nonumber\\
\u\omega_g&=&(\varpi,s^{-1})\simeq (0.6180,0.7549)\nonumber\\
\u\omega_a&=&(s^{-1},s^{-2})\simeq (0.7549,0.5698)\nonumber\\
\u\omega_c&=&(2(s-1)-{s^{-1}\over {24}},s-1)\simeq
(0.6180,0.3247)\ . \eeqa For the Diophantine property and rational
approximations of 2D frequencies, we refer to the specialized
literature (\cite{KimOstlund}, \cite{Khinchin},
\cite{Schweiger1990}). The vectors $\u\omega_s$, $\u\omega_u$,
$\u\omega_\tau$ (considered in \cite{MeissSander},
\cite{Tompaidis96}) are Diophantine, being in the cubic field
generated, respectively, by $s$ and $\tau$; as for $\u\omega_g$,
we have verified numerically that it satisfies the Diophantine
inequality \equ{DCD} up to $|\u m_1|+|\u m_2|\leq 10^6$. The
Diophantine property of $\u\omega_a$, $\u\omega_c$ has been
studied in \cite{CFL}. An algorithm due to Jacobi and Perron (see,
e.g., \cite{Schweiger1990}) to compute the rational approximants
to 2D frequency vectors is briefly recalled in
Appendix~\ref{app:JP}.}


\section{Invariant tori}\label{sec:invariant}

 {In this Section, we introduce the definition and parametric representation  of rotational KAM invariant tori
for mapping systems.}

\begin{definition}\label{def:K}
Let $f_{\u\mu}:\M\rightarrow\M$ be a  {family of conformally
symplectic} diffeomorphisms defined on the manifold
$\M=B\times\torus^{ {d}}$ with $B\subseteq \real^{ {d}}$ an open,
simply connected domain with smooth boundary. Let
$\u\omega\in\real^{ {d}}$ be the frequency vector, satisfying the
Diophantine condition \equ{DCD}. A KAM torus with frequency
$\u\omega$ is an invariant torus described by an embedding $\u
K:\M\rightarrow\torus^{ {d}}$ and a drift $\u\mu\in\real^{ {d}}$,
such that the following invariance equation is satisfied:
\beq{inv} \u f_{\u\mu}\circ \u K(\u \psi)=\u K(\u\psi+\u\omega)\
,\qquad \u\psi\in\torus^{ {d}}\ . \eeq
\end{definition}

 {Notice that, if there exists $\u K$ satisfying \equ{inv}, then $\u K_{\u\sigma}(\u\psi)=\u K(\u\psi+\u\sigma)$
also satisfies \equ{inv} (see \cite{CallejaCdlL2013}). We also notice that, in the case of symplectic diffeomorphisms, Definition~\ref{def:K} applies
without the need to introduce the drift term.}

 {
Let us make explicit the invariance equation \equ{inv} for the map \equ{SMS}. Using vector
notation, we introduce the quantities
$$
\u\xi=(x,z)\ ,\qquad \u\eta=(y,w)\ ,\qquad \u\lambda=(\lambda_1,\lambda_2)\ ,\qquad \u\mu=(\mu_1,\mu_2)\ ,\qquad
\u V=({{\partial W}\over{\partial x}},{{\partial W}\over{\partial z}})\ .
$$
Using the equations in \equ{SMS}, we obtain the relation
\beq{lagr}
\u\xi_{n+1}-(I_2+\Lambda)\u\xi_n+\Lambda \u\xi_{n-1}=\u\mu+\varepsilon \u V(\u\xi;\gamma)\ ,
\eeq
where $I_2$ is the $2\times 2$ identity matrix and $\Lambda$ is the diagonal $2\times 2$ matrix
with non-zero components $\lambda_1$, $\lambda_2$.}

 {
A parameterization of an invariant torus with frequency $\u\omega\in\real^2$ can be constructed by
introducing the hull function $\u P:\torus^2\rightarrow\torus^2$, so that
\beq{param}
\u\xi=\u\psi+\u P(\u\psi)
\eeq
with the property that the flow in the parametric coordinate is linear:
$$
\u\psi_{n+1}=\u\psi_n+2\pi \u\omega\ .
$$
From \equ{SMS}, we obtain also that the components conjugated to $\u\xi$ are given by
$$
\u\eta=\u\omega+\u P(\u\psi+2\pi\u\omega)-\u P(\u\psi)\ .
$$
From \equ{lagr} and \equ{param}, we obtain that $\u P$ satisfies the following equation:
\beqa{eqP}
\u P(\u\psi+2\pi\u\omega)&-&(I_2+\Lambda)\u P(\u\psi)+\Lambda\u P(\u\psi-2\pi\u\omega)+2\pi(I_2-\Lambda)\u \omega\nonumber\\
&-&\u\mu-\varepsilon \u V(\u\psi+\u P(\u\psi);\gamma)=\u 0\ .
\eeqa
As before, if $\u P$ satisfies \equ{eqP}, then
$\u P_{\u \alpha}(\u\psi):= \u P(\u\psi+\u\alpha)+\u\alpha$
will also be solution of \equ{eqP}. To settle this
under-determinacy, we can use the normalization \beq{norm}
\int_{\torus^2}\u P(\u\psi)\ d\u\psi=0\ . \eeq We note that this
is equivalent to the normalization used in \cite{CallejaCdlL2013}.
}
The solution of equation \equ{eqP} in the form of series expansions will be the core topic of Section~\ref{sec:Lindstedt} below.
We refer to \cite{CCL22} for the formulation of \equ{eqP} for the map \equ{DSM2}.

\section{Lindstedt series and analyticity domains for the invariant tori}\label{sec:Lindstedt}

In this Section, we provide the formulae for the computation of
the Lindstedt series expansions of the hull function $\u P$
introduced in \equ{param} as well as of the drift term
 {$\u\mu$, so that they solve \equ{eqP} in the sense of power
series.}

\subsection{Iterative computation of the Lindstedt series}\label{sec:iterative}

We start by expanding the function  {$\u P$, as well as the drift
$\u\mu$, in power series of $\ep$ and by retaining the terms up
to} a given order $N\in\integer_+$, thus obtaining the truncated
Lindstedt series: { \beq{expN} \u P^{(N)}(\u\psi)=\sum_{j=1}^N \u
P_j(\u\psi)\ep^j\ ,\qquad \u\mu^{(N)}=\sum_{j=1}^N \u\mu_j\ep^j\ ,
\eeq where $\u P^{(N)}$ satisfies the normalization \equ{norm}.}
Let us introduce the operator  {$E_{\u\mu}(\u P)$} as the left
hand side of \equ{eqP}. Then, we can say that  {$\u P^{(N)}$ is
such that
$$
\|E_{\u\mu^{(N)}}(\u P^{(N)})\|=O(|\ep|^{N+1})\ .
$$}
The terms  {$\u P_j$, $\u \mu_j$} in \equ{expN} can be obtained as
follows. Consider the formal expansion of the vector function
 {$\u V$ as \beq{V1V2} \u V(\u \psi+\u
P(\u\psi);\gamma)=\sum_{j=0}^\infty \u V_j(\u\psi;\gamma)\ep^j ;
\eeq} we will give in Section~\ref{subsec:expansions} an iterative
 {formula for} the coefficients $\u V_j$. Inserting
\eqref{expN} and \eqref{V1V2} into \eqref{eqP}, one obtains:
 {\beqano &&\sum_{j=0}^\infty  \u P_j(\u\psi+2\pi\u\omega)\ep^j
-(I_2+\Lambda) \sum_{j=0}^\infty \u P_j(\u\psi)\ep^j +\Lambda
\sum_{j=0}^\infty \u P_j(\u\psi-2\pi\u\omega)\ep^j\nonumber\\
&+&2\pi(I_2-\Lambda)\u\omega-\sum_{j=0}^\infty \u\mu_j\ep^j -\ep \sum_{j=0}^\infty \u V_j(\u\psi;\gamma)\ep^j=\u 0\ .
\eeqano}
From the previous relations, equating same powers of $\ep$, the following equations are obtained for $j=0$:
 {\beq{eq-zerorder}
\u P_0(\u\psi+2\pi\u\omega) - (I_2+\Lambda) \u P_0(\u\psi) +\Lambda \u P_0(\u\psi-2\pi\u\omega)
+2\pi(I_2-\Lambda)\u\omega - \u\mu_0 = \u 0\ ,
\eeq}
while for $j>0$ the equations become:
 {\beq{eq-korder}
\u P_j(\u\psi+2\pi\u\omega) - (I_2+\Lambda) \u P_j(\u\psi) +\Lambda \u P_j(\u\psi-2\pi\u\omega)
- \u\mu_j = \u V_{j-1}(\u\psi;\gamma)\ .
\eeq}

\begin{remark}
Notice that, the coefficients $\u V_{j-1}$ at the right hand side of \eqref{eq-korder} depend only on the previously computed coefficients
$\u V_{0}$, $\u V_{1}$, ..., $\u V_{j-2}$.
\end{remark}

At the order $j=0$, a solution of \eqref{eq-zerorder} is given by
$\u P_0=\u 0$ and $\u\mu=\u\mu_0$ with components
$$
\mu_{0,1} = 2\pi(1-\lambda_1)\omega_1\ ,\qquad \mu_{0,2} = 2\pi(1 - \lambda_2)\omega_2\ .
$$
For $j\geq 1$,  {the two components of the} cohomology equations
\eqref{eq-korder} are of the following form
 {\beq{cohom}
\L_{\lambda,{\u\omega}} g({\u \psi})-\mu=\eta({\u \psi};\gamma)\ ,
\eeq
where the operator $\L_{\lambda,{\u\omega}}$ is defined as
$$
\L_{\lambda,{\u\omega}} g({\u \psi}):= g({\u \psi} + 2\pi{\u \omega}) - (1+\lambda)g({\u \psi}) + \lambda g({\u \psi} - 2\pi{\u \omega})\ ;
$$}
the unknowns in \equ{cohom} are the function $g:\torus^2
\longrightarrow  {\real}$ and the scalar $\mu \in  {\real}$,
whereas the function $\eta:\torus^2 \longrightarrow \real$ and the
parameters $\gamma, \lambda\in \real$ are given.  {The solution of
\equ{cohom} is obtaned through the following lemma.}

\begin{lemma}\label{lemma:sol-cohom-eq}
Given  {$\lambda\in \real$}, let ${\u \omega}\in\real^2$ be a
Diophantine vector and $\eta :\torus^2 \longrightarrow  {\real} $
be an analytic  function. Then, equation \equ{cohom}  {has a
unique solution $(g,\mu)$ satisfying the normalization
\beq{norm-g} \int_{\torus^2} g(\u\psi)\ d\u\psi=0\ , \eeq given
by} \beqa{sol-cohom-lemma}
\mu &=& -\hat \eta_{\u 0}(\gamma)\nonumber \\
g({\u\psi}) &=& \sum_{\u k \in \integer^2 \backslash \{\u 0\} }\frac{\hat \eta_{\u k}(\gamma)}{e^{2\pi i
{\u \omega}\cdot{\u k}} - (1+\lambda) + \lambda e^{-2\pi i {\u \omega}\cdot{\u k} }}
e^{ i {\u k} \cdot {\u\psi}}\ ,\qquad \u k\not=\u 0\ ,
\eeqa
where $\hat \eta_{\u k}(\gamma) $ are the Fourier coefficients of $\eta$.
\end{lemma}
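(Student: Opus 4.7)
The plan is to pass to the Fourier side and solve the equation mode by mode. Writing the Fourier expansion
\[
g(\u\psi)=\sum_{\u k\in\integer^2}\hat g_{\u k}\,e^{i\u k\cdot\u\psi},\qquad \eta(\u\psi;\gamma)=\sum_{\u k\in\integer^2}\hat\eta_{\u k}(\gamma)\,e^{i\u k\cdot\u\psi},
\]
one notices that $\L_{\lambda,\u\omega}$ is a Fourier multiplier: it sends $e^{i\u k\cdot\u\psi}$ to $D_{\u k}\,e^{i\u k\cdot\u\psi}$ with symbol
\[
D_{\u k}:=e^{2\pi i\u\omega\cdot\u k}-(1+\lambda)+\lambda e^{-2\pi i\u\omega\cdot\u k}.
\]
For $\u k=\u 0$ one has $D_{\u 0}=0$, so the zero Fourier mode of \equ{cohom} reduces to $-\mu=\hat\eta_{\u 0}(\gamma)$, which forces $\mu=-\hat\eta_{\u 0}(\gamma)$ and leaves $\hat g_{\u 0}$ free; the normalization \equ{norm-g} then fixes $\hat g_{\u 0}=0$, giving uniqueness. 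For $\u k\neq \u 0$, the equation becomes $D_{\u k}\hat g_{\u k}=\hat\eta_{\u k}(\gamma)$, leading formally to the expression \equ{sol-cohom-lemma}.

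The first technical point is that $D_{\u k}\neq 0$ for $\u k\neq\u 0$. I would factor
\[
D_{\u k}=\bigl(e^{2\pi i\u\omega\cdot\u k}-1\bigr)\bigl(1-\lambda e^{-2\pi i\u\omega\cdot\u k}\bigr),
\]
which is easy to verify by expansion. The first factor is non-zero because $\u\omega$ is irrational (a consequence of the Diophantine condition \equ{DCD}); the second factor is non-zero for $0<\lambda\leq 1$, since either $\lambda<1$ (in which case $|1-\lambda e^{-i\theta}|\geq 1-\lambda>0$) or $\lambda=1$ (in which case it equals $1-e^{-2\pi i\u\omega\cdot\u k}$, again non-zero by irrationality).

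The main obstacle is showing that the formal series for $g$ actually defines an analytic function, i.e.\ that the small divisors in $D_{\u k}$ are compensated by the Fourier decay of $\eta$. From the factorization above one has $|D_{\u k}|\geq |e^{2\pi i\u\omega\cdot\u k}-1|\cdot\min(1-\lambda,|1-e^{-2\pi i\u\omega\cdot\u k}|)$. Using $|e^{2\pi i\theta}-1|\geq 4\,\|\theta\|_{\torus}$ together with the Diophantine condition \equ{DCD}, one obtains a lower bound $|D_{\u k}|\geq C\,|\u k|^{-\tau}$ (respectively $|D_{\u k}|\geq C\,|\u k|^{-2\tau}$ when $\lambda=1$) for some $C>0$ depending on $\nu,\tau,\lambda$. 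Since $\eta$ is analytic, its Fourier coefficients decay exponentially, $|\hat\eta_{\u k}(\gamma)|\leq M\,e^{-\rho|\u k|}$ on a strip of width $\rho>0$; multiplying by the polynomial divisor bound preserves exponential decay on any slightly smaller strip. This gives absolute and uniform convergence of the series defining $g$, and hence analyticity. Finally, substituting the resulting $g$ and $\mu$ back into \equ{cohom} verifies the identity mode by mode, completing the proof.
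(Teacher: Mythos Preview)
Your proof is correct and follows the same Fourier-multiplier approach as the paper: expand $g$ and $\eta$ in Fourier series, solve mode by mode, and fix $\hat g_{\u 0}=0$ via the normalization \equ{norm-g}. You in fact go further than the paper's own proof by supplying the factorization $D_{\u k}=(e^{2\pi i\u\omega\cdot\u k}-1)(1-\lambda e^{-2\pi i\u\omega\cdot\u k})$, the non-vanishing argument, and the small-divisor estimate ensuring analyticity of $g$---details the paper leaves implicit.
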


\begin{proof}
Equation \eqref{cohom} can be solved in Fourier space. Consider the Fourier expansions
\beq{f-exp-g}
g({\u\psi}) = \sum_{{\u k}\in \integer^2} \hat g_{\u k} e^{ i {\u k}\cdot {\u\psi}}, \qquad
\eta({\u\psi};\gamma) = \sum_{{\u k}\in \integer^2} \hat \eta_{\u k}(\gamma) e^{ i {\u k}\cdot {\u\psi}}\ ;
\eeq
inserting \equ{f-exp-g} into \eqref{cohom}, the equation \eqref{cohom} becomes
$$
\sum_{{\u k}\in \integer^2} \left( e^{2\pi i {\u \omega}\cdot{\u k}} - (1+\lambda) + \lambda e^{-2\pi i {\u \omega}\cdot
{\u k} } \right) \hat g_{\u k} e^{ i {\u\psi}\cdot {\u k}} - \mu = \sum_{{\u k}\in \integer^2} \hat \eta_{\u k}
(\gamma) e^{ i {\u \psi}\cdot {\u k}}\ .
$$
Its solution is given by \equ{sol-cohom-lemma}.
 {Note that $\hat{g}_{\u 0}$ is a free parameter; we choose it in such a way that $g$ satisfies the
normalization \equ{norm-g}, namely $\hat{g}_{\u 0}=0$.}
\end{proof}

\subsection{Expansions of the vector function $\u V$} \label{subsec:expansions}
To carry out the procedure described in Section
\ref{sec:iterative} to determine the Lindstedt series, one needs
to compute the formal expansion in \eqref{V1V2} for the function
{$\u V(\u\psi+\u P(\u\psi);\gamma)$, which can be found as follows
(see \cite{CellettiC95})}.

We consider the Fourier expansions
$$
\u V(\u\psi;\gamma) = \sum_{\u k\in \integer^2} {\hat{\u V}}_{\u k}(\gamma)\ e^{ i \u k \cdot \u\psi}\ ;
$$
therefore, we obtain
 {\begin{equation}\label{Vell}
    \u V(\u\psi + \u P(\u\psi);\gamma) = \sum_{\u k\in \integer^2} {\hat{\u V}}_{\u k}(\gamma)e^{ i \u k \cdot
    (\u\psi + \u P(\u\psi))}.
\end{equation}}
 {It is enough} to find the expansions of the exponential
functions as (compare with \cite{Brent})
\begin{equation}\label{expansion-exp-uv}
    e^{ i \u k \cdot (\u\psi + \u P(\u\psi))} = \sum_{j=0}^\infty b_{j,\u k}(\u\psi) \ep^j
\end{equation}
for suitable coefficients  {$b_{j,\u k}$} which are determined as
follows. Taking the derivative of both sides of
\eqref{expansion-exp-uv}, we obtain  {$$
     i \u k \cdot (\frac{d}{d\ep}\u P(\u\psi)) e^{ i \u k \cdot (\u\psi + \u P(\u\psi))} =
    \sum_{j=0}^\infty (j+1)b_{j+1,\u k}(\u\psi) \ep^j\ .
$$}
Using the formal expansion of $\u P$, together with \eqref{expansion-exp-uv}, we have
 {$$
 i \u k \cdot \left(\sum_{j=0}^\infty (j+1) \u P_{j+1}\ep^j \right) \sum_{j=0}^\infty b_{j,\u k} \ep^j  =
 \sum_{j=0}^\infty (j+1)b_{j+1,\u k} \ep^j\ ,
 $$
 namely
 $$
\sum_{j=0}^\infty  i \u k \cdot  \left(\sum_{h=0}^j (h+1) \u P_{h+1}\ b_{j-h,\u k}\right)  \ep^j  =
\sum_{j=0}^\infty (j+1)b_{j+1,\u k} \ep^j\ .
$$}
Thus, the following relations follow:
 {\beqano
    b_{0,\u k}(\u\psi) &=& e^{ i \u k \cdot \u\psi} \nonumber \\
    b_{j+1,\u k}(\u\psi) &=& \frac{ i}{j+1} \u k \cdot \sum_{h=0}^j (h+1) \u P_{h+1}(\u\psi)b_{j-h,\u k}(\u\psi)\ ,\qquad j\geq 0\ .
\eeqano}

\subsection{Explicit cohomology equations}

Putting together \equ{Vell} and \equ{expansion-exp-uv} one obtains
that the coefficient of order $\ep^j$ in  \equ{V1V2} is given by
$$
    \u V_j(\u \varphi; \gamma) = \sum_{\u k\in \integer^2} {\u{\hat {V}}}_{\u k}(\gamma) b_{j,\u k}(\u\psi).
$$
Thus, the explicit cohomology equations \eqref{eq-korder} at order $\ep^j$ are given by \begin{equation}\label{eq:explicit-cohom}
    \u P_j(\u\psi+2\pi\u\omega) - (I_2+\Lambda) \u P_j(\u\psi) +\Lambda \u P_j(\u\psi-2\pi\u\omega)
- \u\mu_j = \sum_{\u k\in \integer^2} {\u{\hat {V}}}_{\u k}(\gamma) b_{j-1,\u k}(\u\psi)
\end{equation}
whose solutions are obtained using Lemma \ref{lemma:sol-cohom-eq}.
Once  {$\u P_j$ and $\u  \mu_j$} are computed from
\eqref{eq:explicit-cohom}, one obtains the  {desired} formal
solutions  {$\u P$, $\u \mu$} of \equ{eqP}.

\subsection{Analyticity domains and breakdown of
attractors}\label{sec:domains}

In this Section, we provide some results for the  {2D and 4D}
standard maps introduced in Section~\ref{sec:SM}. Precisely, for a
fixed frequency vector, say $\u\omega$, we proceed along the
following steps:
\begin{itemize}
    \item[$A)$] we compute the Lindstedt series expansion up to a given order $N$ according to the procedure described in Section~\ref{sec:Lindstedt};
    \item[$B)$] we analyze the behavior of the coefficients of the Lindstedt series to provide an estimate
    of the radius of convergence;
    \item[$C)$] we determine the Pad\'e approximants (see Appendix~\ref{sec:Pade}) associated to the Lindstedt series to order $N$ and we draw the poles in the
    complex plane $(\varepsilon_r,\varepsilon_i)$.
\end{itemize}

Concerning step $B)$, we notice that the radius of convergence
$\rho$ of the Lindstedt series in the complex $\varepsilon$-plane
can be  {obtained as an approximation of}  {$$
\rho_u(\omega)=\inf_{\theta\in\torus} ({\rm limsup}_{n\to\infty}
|u_n(\psi)|^{1\over n})^{-1}\ ,\qquad
\rho_v(\omega)=\inf_{\theta\in\torus} ({\rm limsup}_{n\to\infty}
|v_n(\psi)|^{1\over n})^{-1}\ ,
$$
where $\u P_n = (u_n, v_n)$ are the coefficients of the formal expansion of $\u P$ in \eqref{param}. For the 2D standard map in \equ{DSM2} it is enough to consider a hull function $u$ satisfying
\beq{u2D}
x=\psi+u(\psi).
\eeq
} In practical computations, we will compute an approximation
{$\rho_u$, $\rho_v$} to the radius of convergence only for a
specific value of  {the parametric coordinate, say $\bar\psi=1$ in
the 2D case and $\u{\bar\psi}=(1,1)$ in the 4D case. In the
applications for steps B) and C), the estimate of the radius of
convergence is obtained by computing the limits for $n$ large of
the sequences $u_n({\bar\psi})^{-1/n}$, $v_n({\bar\psi})^{-1/n}$,
while the estimate of the breakdown threshold is obtained by
fitting the data to a circle and by computing the intersection of
the domain with the positive real axis. When the domain presents
oscillations, we indicate an inner and an outer circle. The
precision of the estimate of the threshold depends on the order of
the Pad\'e approximants and we will prefer to give the results as
an interval with a lower and upper limit of the threshold. We
remark that the results can in principle be sharpened by
increasing the order $N$ of the Lindstedt series expansions.}

\subsection{Radius of convergence and Pad\'e approximants for the 2D case}\label{sec:pade2dim}

 {In the following case studies,} the Lindstedt series in step $A)$ are determined
at  {different orders, up to $N=512$, and using up to 60} digits
of precision.

\begin{figure}[ht]
    \centering
    \includegraphics[trim = 8cm 10cm 8cm 10cm, width=3.4truecm]{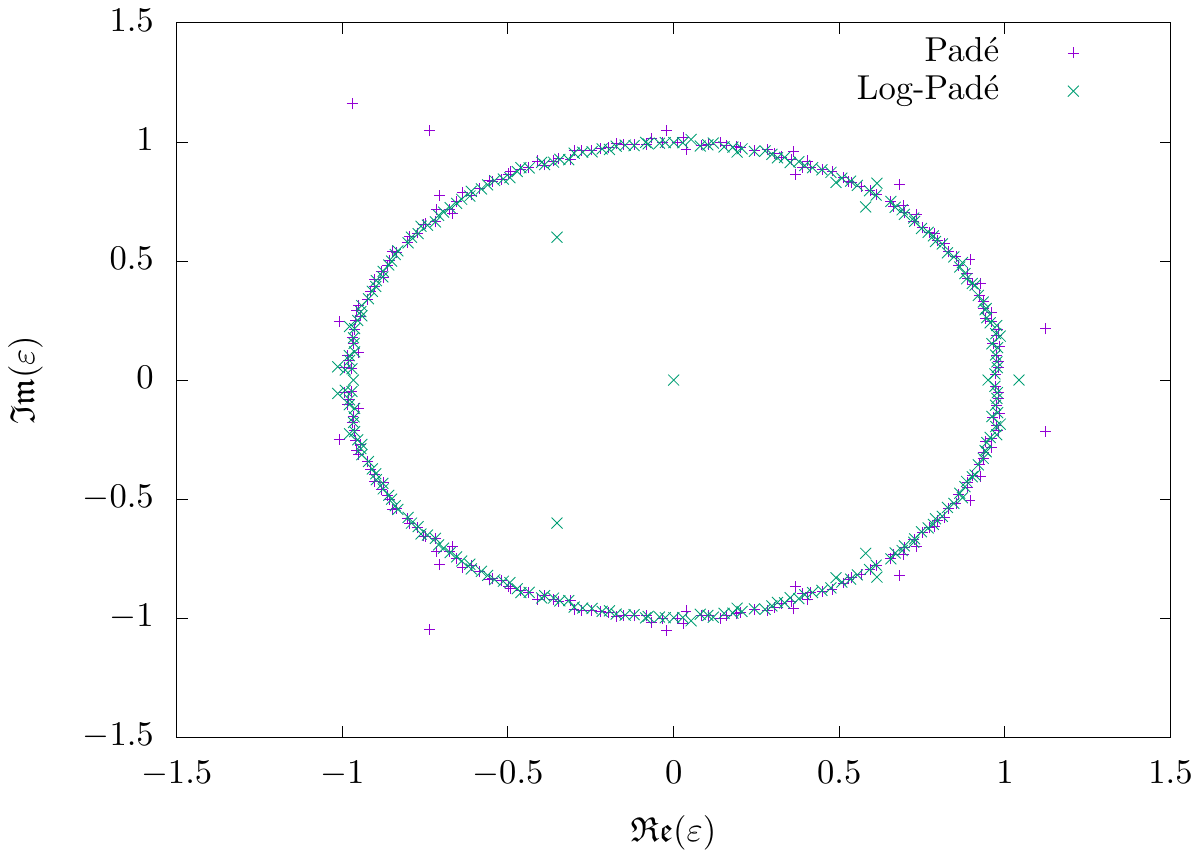}\hfil\hfil
    \includegraphics[trim = 8cm 10cm 8cm 10cm, width=3.4truecm]{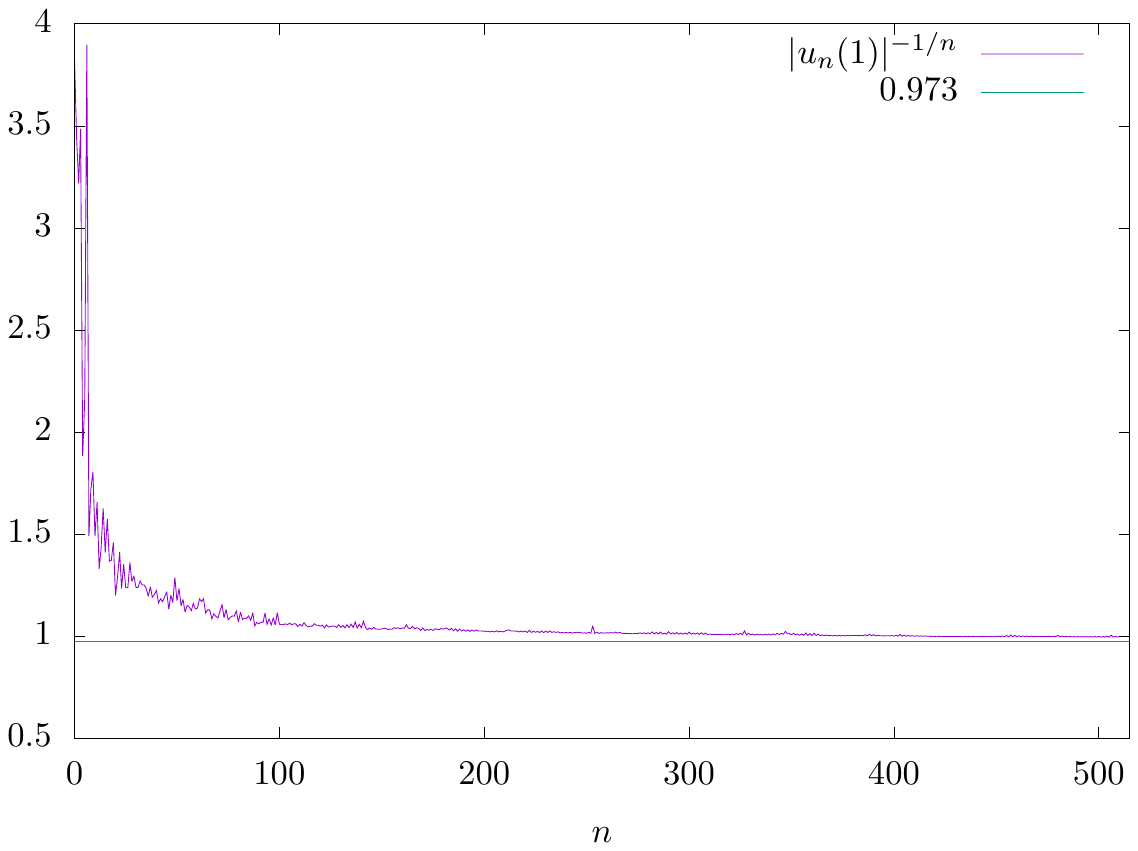}

    \caption{Map \equ{DSM2} with $V(x)=\sin x$, $\omega={{\sqrt{5}-1}\over {2}}$, $\lambda=0.8$.
    Left panel: poles of Pad\'e and log-Pad\'e approximants with $N=512$. Right panel: convergence of the Lindstedt series coefficients.}
    \label{fig:gold2dim}
\end{figure}

Figure~\ref{fig:gold2dim} refers to the map \equ{DSM2} with
$\omega={{\sqrt{5}-1}\over {2}}$, $\lambda=0.8$,  {$V(x)=\sin x$};
the left panel shows the  {zeros of the denominators of the}
Pad\'e approximants after computing the Lindstedt series up to the
order $N=512$ and the right panel shows the values
$|u_n(\bar\psi)|^{-{1\over n}}$ with $\bar\psi=1$ and
$n=1,...,512$. The breakdown threshold found in \cite{CallejaC10}
for this sample case was $\varepsilon=0.973$, which is consistent
with the radius of convergence in the right panel of
Figure~\ref{fig:gold2dim} and with the threshold determined in the
left panel of Figure~\ref{fig:gold2dim} as the intersection of the
analyticity domain with the positive abscissa (compare with
\cite{BCCF}, \cite{Lla-Tom-95}).

It is known that the Pad\'e approximants are better suited to
approximate functions with poles, rather than functions with
branch points (\cite{Bak-96}). The theoretical and numerical
evidence presented in \cite{Lla-Tom-94}, \cite{Lla-Tom-95}
indicates that the boundaries of the domains of analyticity of
invariant circles could be described as an accumulation of branch
points. As a consequence, a logarithmic Pad\'e approximation
{(see Figure~\ref{fig:gold2dim}, left panel)} is better suited to
compute singularities that are branch points; we follow
\cite{Lla-Tom-95} for the computation of the logarithmic Pad\'e
approximation. More precisely, if a function $f(z)$ has a  branch
point singularity at $z=1/\alpha$, then
$$
f(z) = C(1-\alpha z)^c +g(z)\ ,
$$
where $g(z)$ is  {of higher order} at $z=1/\alpha$,  {$C$ is a
constant and $c< 0$.} For $z$ close to $1/\alpha$,  one has
$$ F(z) := \frac{d}{dz} \ln(
f(z) ) = \frac{f'(z)}{f(z)}\approx\frac{\chi}{z-(1/\alpha )}\ ,
$$
for some $\chi\in\real$. The order $\chi$ of the branch point could be estimated by using
Pad\'e approximants on different functions related to $F(z)$. For example,
close to $z=1/\alpha$, one has $$\left(z - \frac{1}{\alpha}\right)\frac{f'(z)}{f(z)} \approx \chi. $$

One expects that a Pad\'e approximant for $F(z)$
exhibits a pole at $z=1/\alpha$ with residue $\chi$. Finally, to get an
$[M,N]$ Pad\'e approximant of $F(z)$ one needs to find polynomials
$Q_1^{(M)}(z)$ and $Q_2^{(N)}(z)$ of degrees $M$ and $N$,
respectively, satisfying
$$
\frac{f'(z)}{f(z)} =\frac{Q_1^{(M)}(z)}{Q_2^{(N)}(z)} + O(z^{M+N+1})
$$
and $Q_2^{(N)}(0)=1.$

\begin{figure}[ht]
    \centering
    \includegraphics[trim = 8cm 10cm 8cm 10cm, width=3.4truecm]{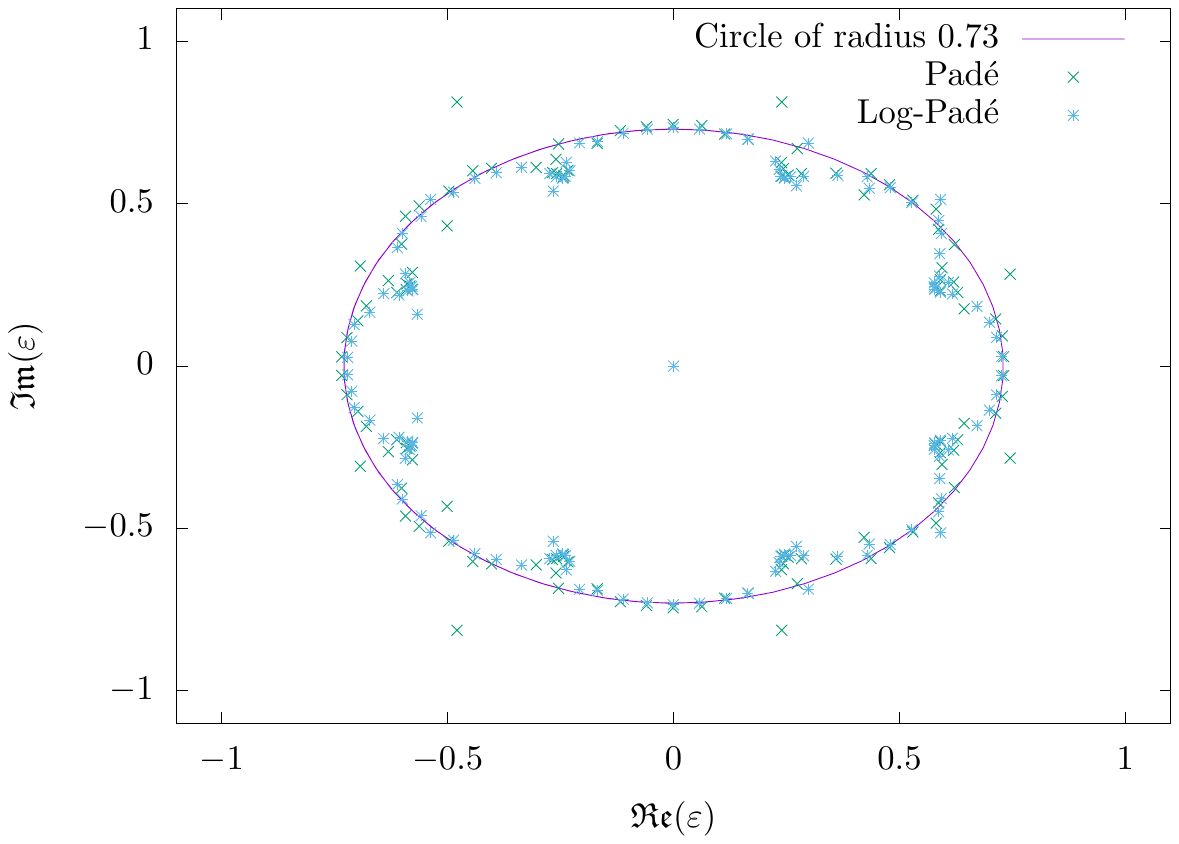}\hfil\hfil
    \includegraphics[trim = 8cm 10cm 8cm 10cm, width=3.4truecm]{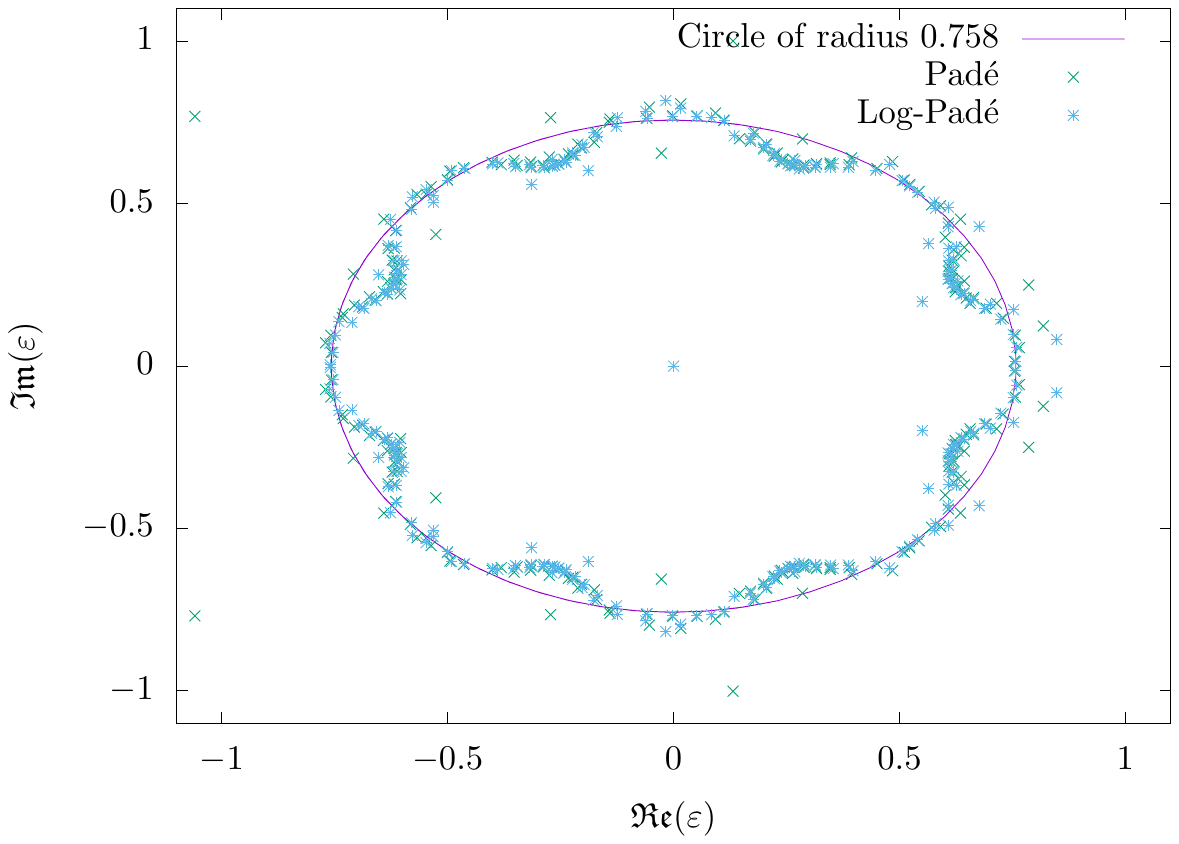}\\
    \includegraphics[trim = 8cm 10cm 8cm 9cm, width=3.4truecm]{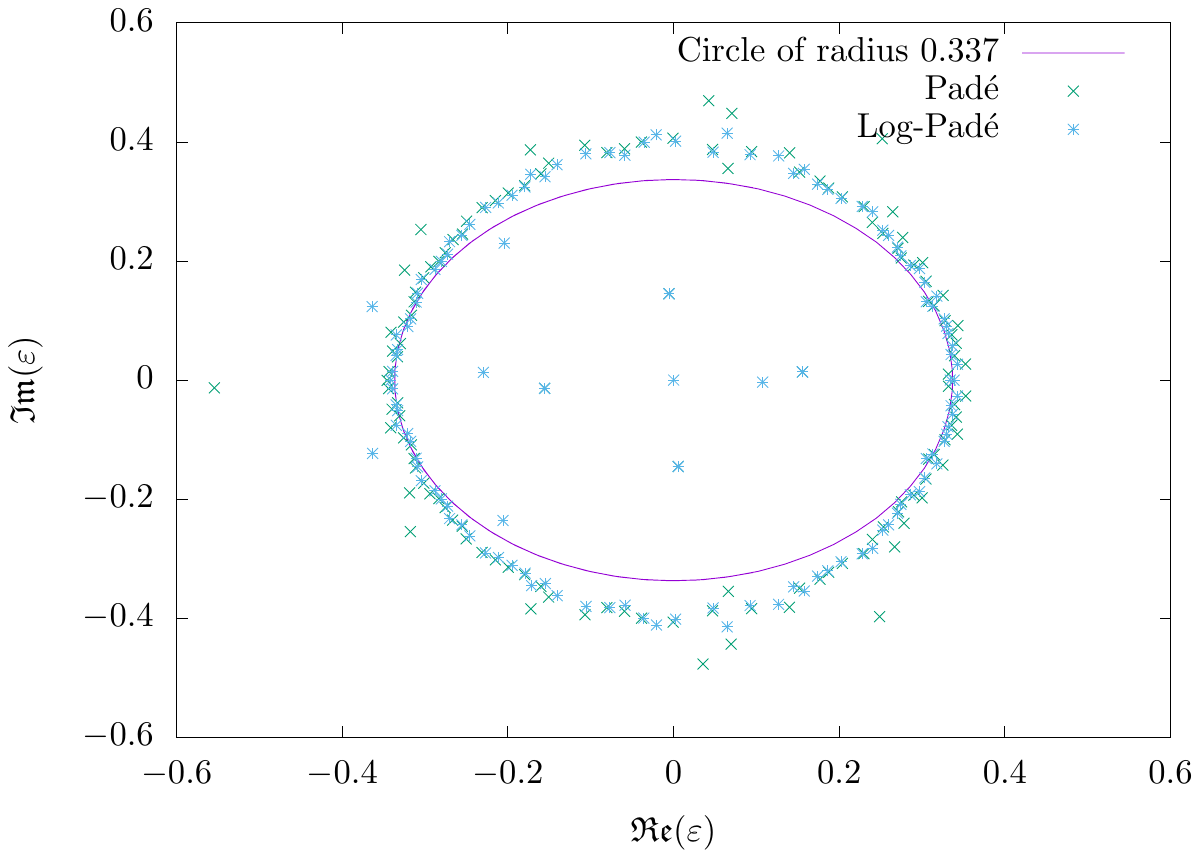}\hfil\hfil
    \includegraphics[trim = 8cm 10cm 8cm 9cm, width=3.4truecm]{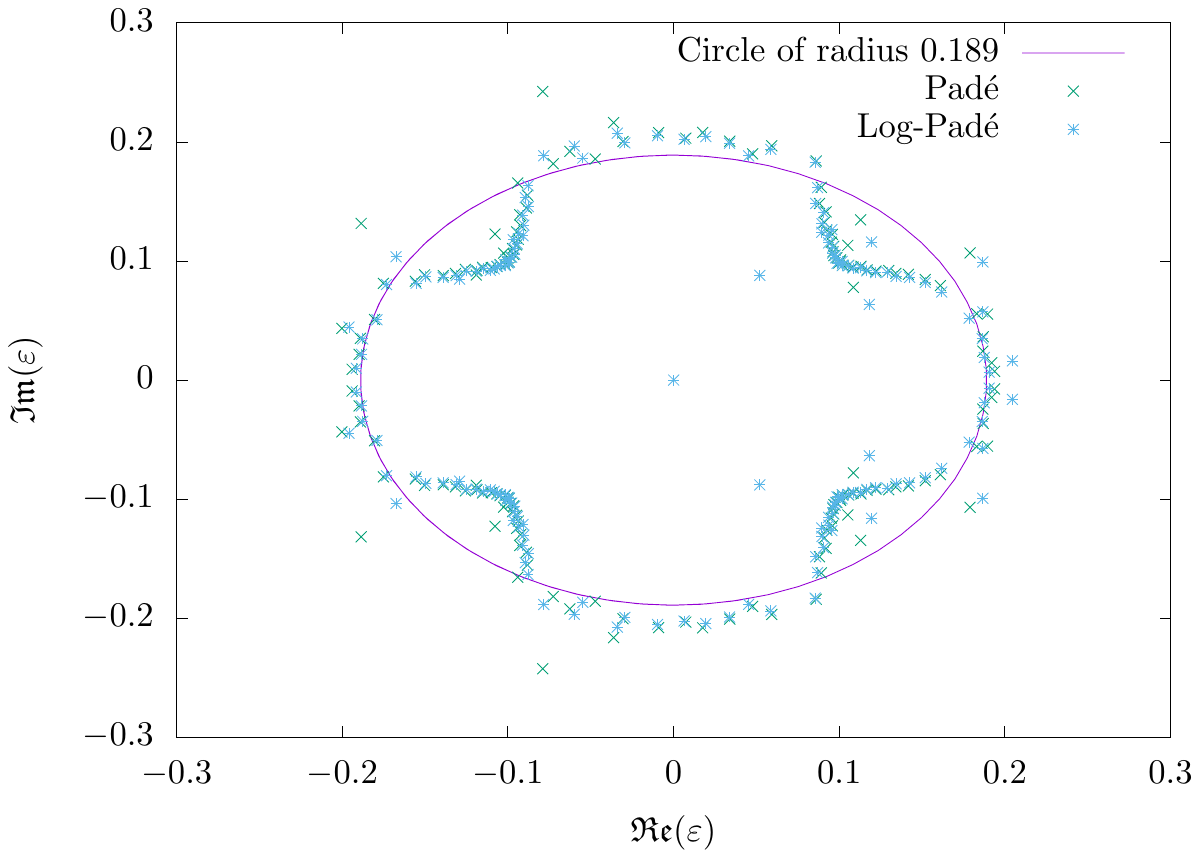}

    \caption{Poles of Pad\'e and log-Pad\'e approximants using Lindstedt series of order at least $N=256$.
Upper plots: map \equ{DSM2} with $V(x)=\sin(x)$, $\omega=s^{-1}$.
Left: $\lambda=0.9$, and circle of radius 0.73. Right:
$\lambda=0.8$ and circle of radius 0.758. Lower plots: map
\equ{DSM2} with $V_{2h}(x)=\sin x + {1\over 3}\sin(3x)$,
$\lambda=0.8$. Left: $\omega={{\sqrt{5}-1}\over {2}}$ and circle
of radius 0.337. Right: $\omega=s^{-1}$ and circle of radius
{0.189}.} \label{fig:1os2dim}
\end{figure}

Figure~\ref{fig:1os2dim}, upper plots, refers  {again to the 2D
map in \equ{DSM2}, but with} the frequency $\omega=s^{-1}$; the
left panel shows  {the zeros of the denominators of} the Pad\'e
and log-Pad\'e approximants for the  one-harmonic case $V(x)=\sin
x$ and for two values of $\lambda$;  {the circles shown in
Figure~\ref{fig:1os2dim} are used to approximate the breakdown
threshold} and they are estimated as $\varepsilon_c {\approx}0.73$
for $\lambda=0.9$ and $\varepsilon_c {\approx}0.758$ for
$\lambda=0.8$, well below the threshold of the golden ratio of
Figure~\ref{fig:gold2dim}. Besides, for $\omega=s^{-1}$ the
results for the domain take a sort of flower shape. The results
for the two-harmonic case with $V_{2h}(x)=\sin x+{1\over
3}\sin(3x)$ are shown in the lower panels of
Figure~\ref{fig:1os2dim} for $\lambda=0.8$ with
$\omega={{\sqrt{5}-1}\over {2}}$ (left) and $\omega=s^{-1}$
(right). In the latter case the estimate of the threshold is more
difficult, due to the irregular shape of the domain of
analyticity.

To analyze the dependence of the domains of analyticity on the
choice of the dissipation parameter $\lambda$,
Figure~\ref{fig:2arm} shows the results of the  {poles of the}
Pad\'e approximants for the two harmonics potential $V_{2h}$ and
for different values of $\lambda$, taking
$\omega={{\sqrt{5}-1}\over {2}}$ (left panel) and $\omega=s^{-1}$
(right panel). We fixed $N=256$ and used 40 digits of precision;
 {notice that the lines of poles in Figure~\ref{fig:2arm} are
known to be  an indication of branch singularities (see
\cite{Lla-Tom-94}, \cite{Lla-Tom-95} for further details)}.

\begin{figure}[ht]
    \centering
    \includegraphics[trim = 8cm 10cm 8cm 10cm, width=3.4truecm]{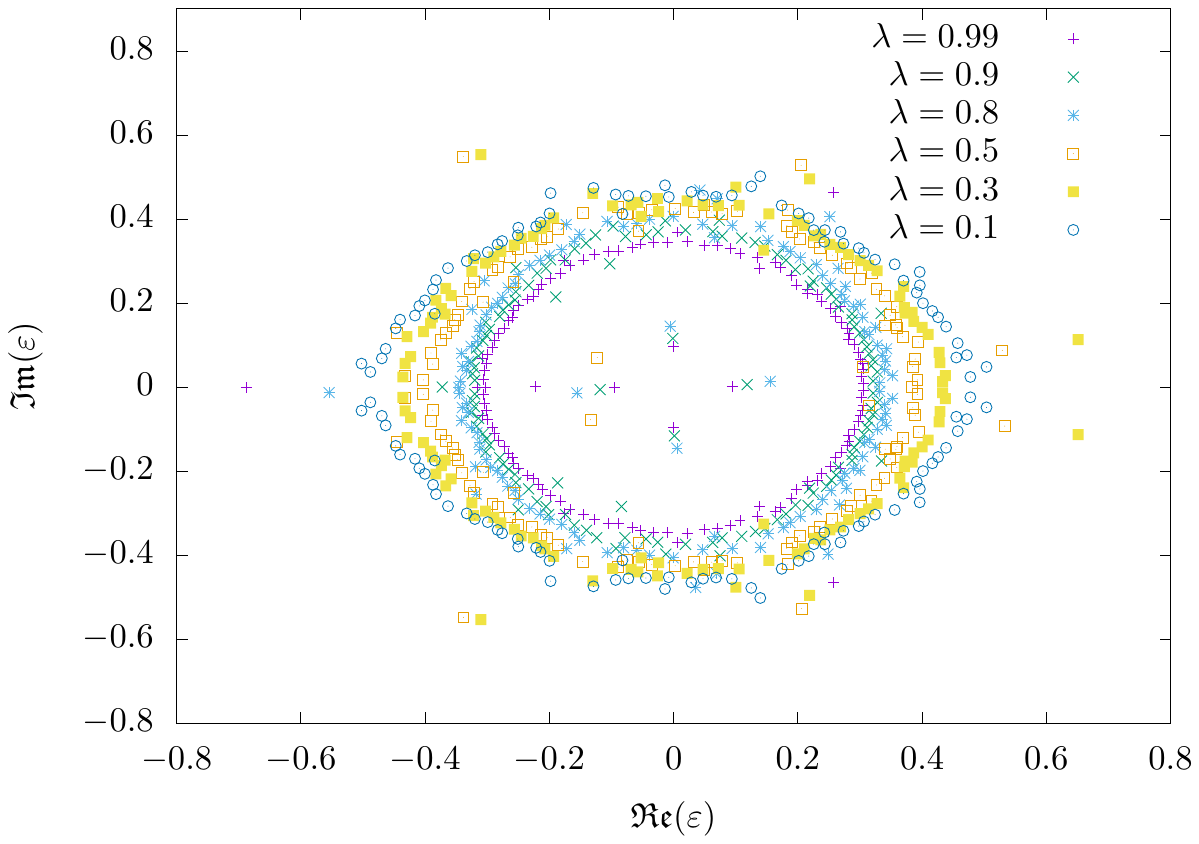}\hfil\hfil
    \includegraphics[trim = 8cm 10cm 8cm 10cm, width=3.4truecm]{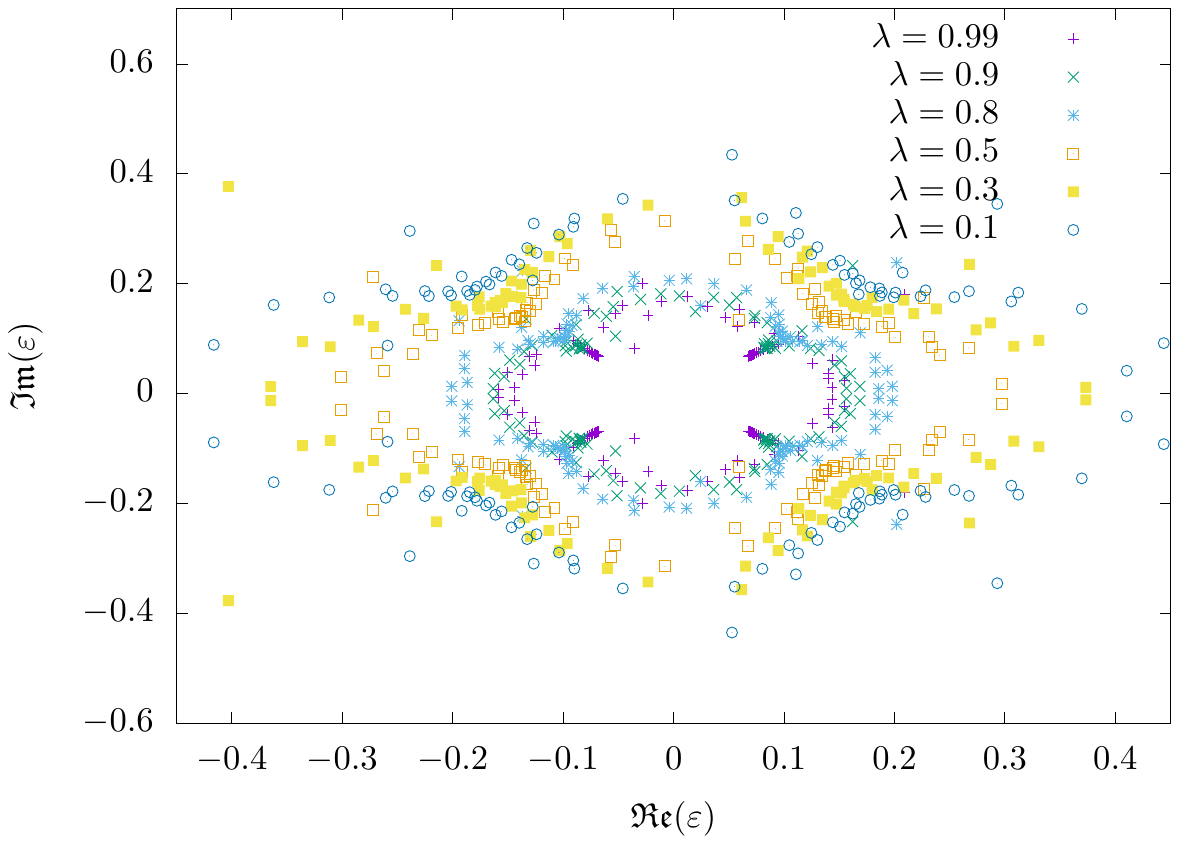}

    \caption{Map \equ{DSM2} with $V_{2h}(x)=\sin x+{1\over 3}\sin(3x)$ for different values of $\lambda$
with $N=256$ and 40 digits of precision. The plots display the poles of the Pad\'e approximants for $N=256$.
Left panel: $\omega={{\sqrt{5}-1}\over {2}}$.
Right panel: $\omega=s^{-1}$.}
\label{fig:2arm}
\end{figure}

\subsection{Pad\'e approximants for the 4D case}\label{sec:pade4dim}

For the 4D maps  {introduced in \equ{SMS} with the potential in
\equ{W}}, the computations are definitely more demanding; for this
reason, the Lindstedt series in step $A)$ are determined only up
to order $N=128$, and we reach in some cases $N=256$.
 {Figure~\ref{fig:4d-sympl-case} refers to the case with
$\u\omega_s$, $\gamma = 0.01$; they show the poles of the Pad\'e
and log-Pad\'e approximants with $N=128$, obtained using 100
digits of precision, the lower right panel shows the radius of
convergence of the Lindstedt coefficients. Given that the poles of
the Pad\'e approximants often lie on an irregular curve, we
provide an inner and outer circle to confine the regions where
most of the poles are located; due to the asymptotic character of
the Lindstedt series, we expect that the radius of convergence of
the Lindstedt series occurs within the two level lines
corresponding to the radii of the inner and outer circles as shown
in Figure~\ref{fig:4d-sympl-case}, lower right panel.}

\begin{figure}[ht]
    \centering
    \includegraphics[trim = 8cm 10cm 8cm 10cm, width=3.4truecm]{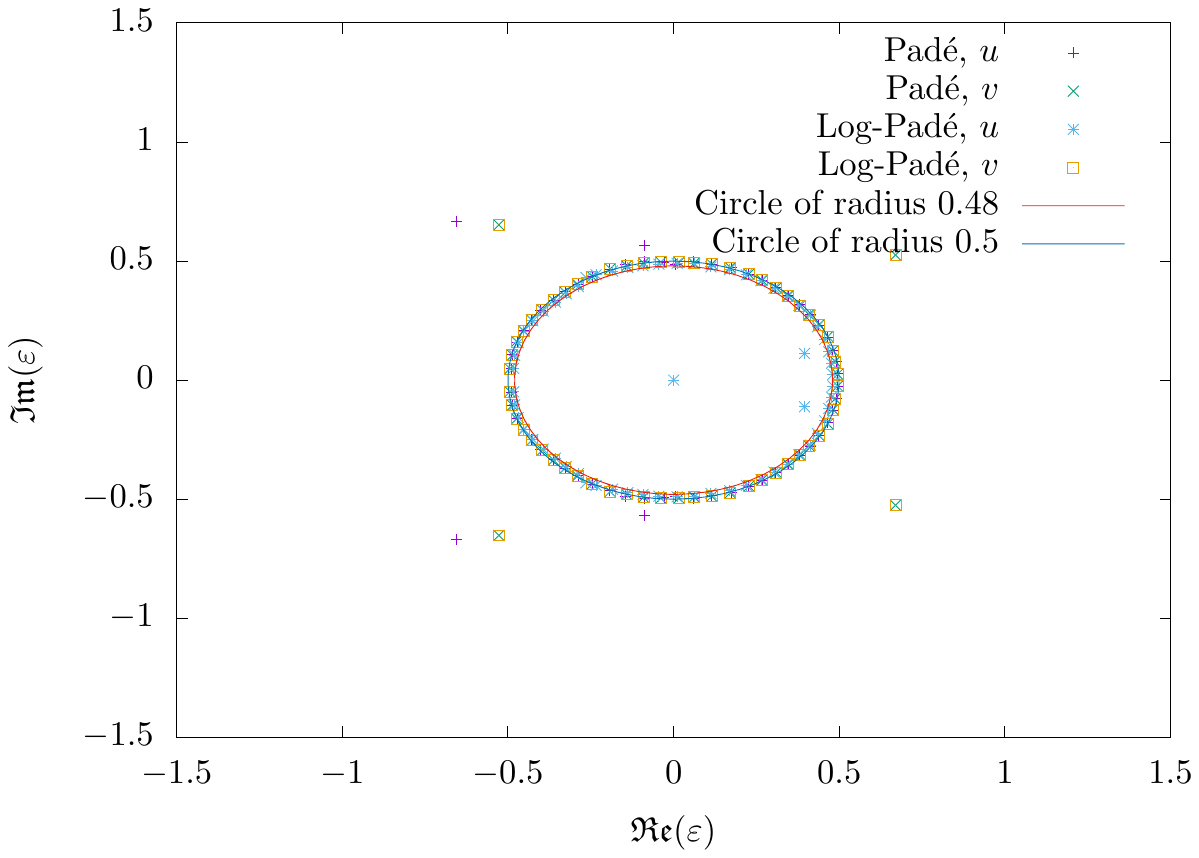}\hfil\hfil
    \includegraphics[trim = 8cm 10cm 8cm 10cm, width=3.4truecm]{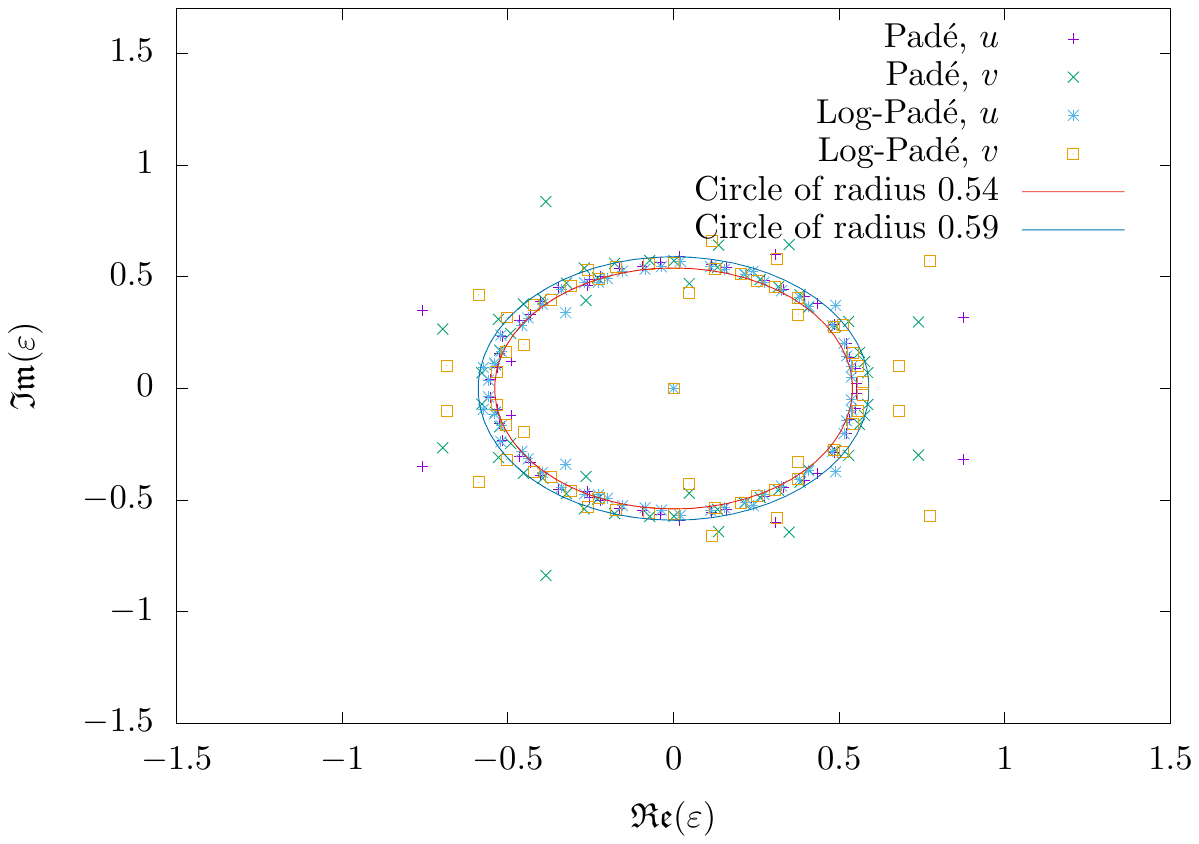}\\
    \includegraphics[trim = 8cm 10cm 8cm 10cm, width=3.4truecm]{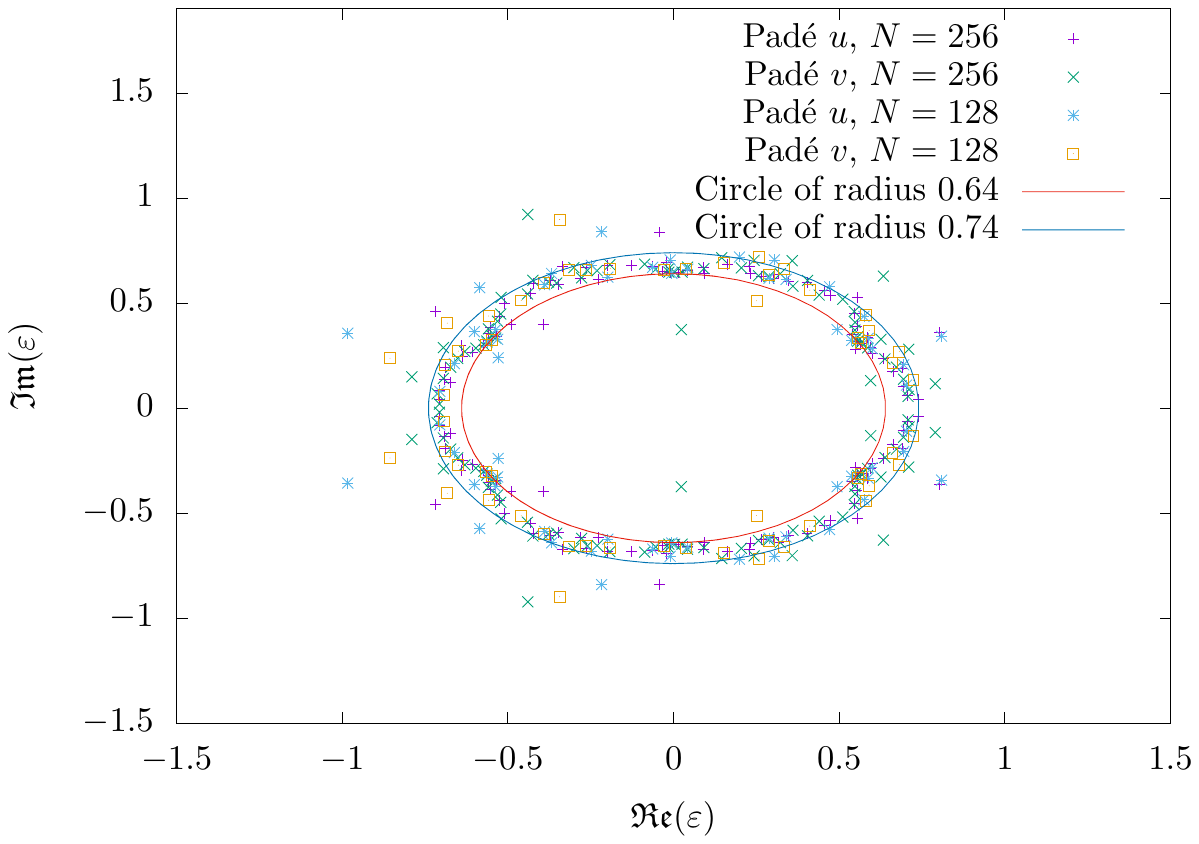}\hfil\hfil
    \includegraphics[trim = 8cm 10cm 8cm 10cm, width=3.4truecm]{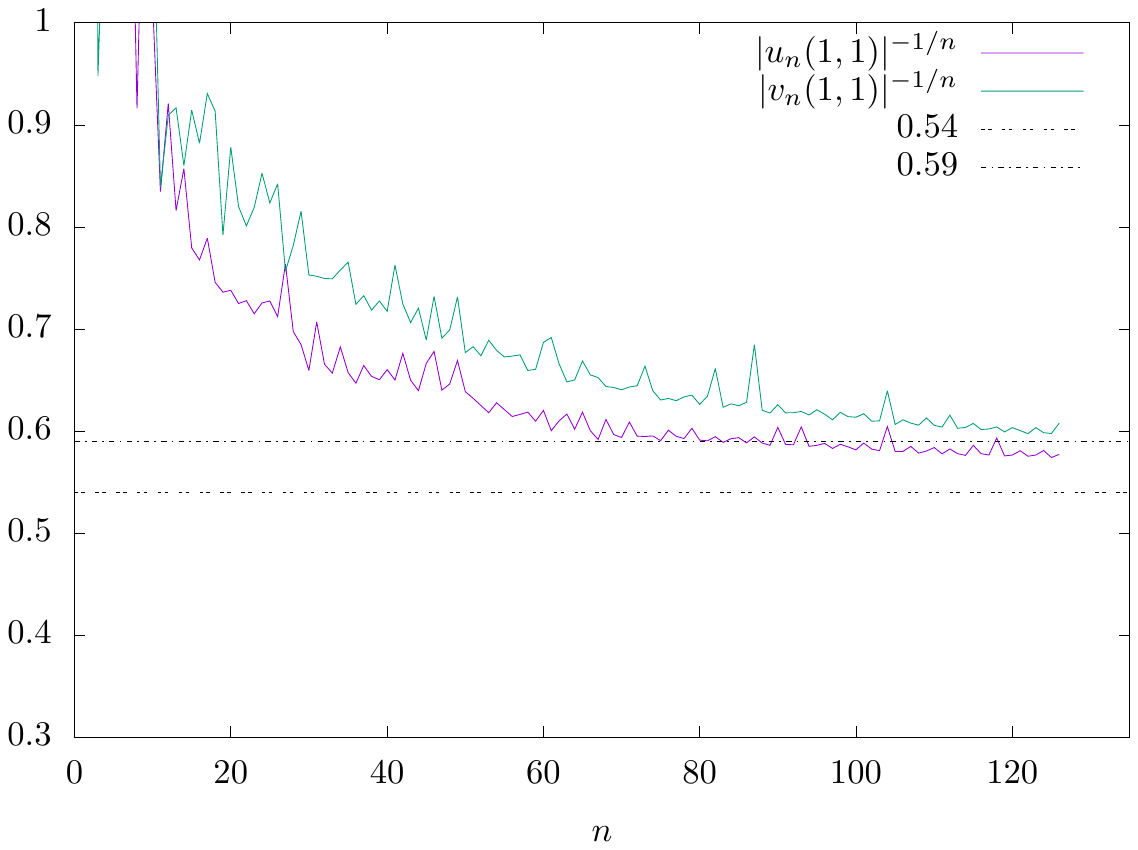}
    \caption{ {Map \equ{SMS} with $\lambda_1=\lambda_2=1$ (upper left), $\lambda_1=1$, $\lambda_2=0.8$ (upper right),
    $\lambda_1=0.8$, $\lambda_2=0.7$ (lower left) for $\u\omega_s$,
$\gamma = 0.01$. The plots display the poles of the Pad\'e and log-Pad\'e approximants for $N=128$
 {or (lower left) the poles of the Pad\'e approximants for $N=128$ and $N=256$}.
The lower right panel  {shows the plots for the estimate of the}
radius of convergence of the Lindstedt series using coefficients
up to order $N = 128$ and $\u\psi = (1,1)$ for $\lambda_1=1$,
$\lambda_2=0.8$.}}\label{fig:4d-sympl-case}
\end{figure}


We studied also the  {other frequency vectors introduced in
\equ{vectors}}, providing an estimate of the breakdown threshold
by looking at the  {intersection of the poles of the Pad\'e
approximants with the real axis}. The results of such thresholds
are summarized in Table~\ref{tab:omegas};  {also  in this case, we
provide results in the form of an interval giving the radii of the
annular regions containing most of the poles.
Table~\ref{tab:omegas}} shows that the  {mean values of the
intervals often} tend to increase, as the 4D map switches from
symplectic to mixed to dissipative.

\vskip.1in

 {
\begin{table}
$$
\begin{array}{|c|c|c|c|c|c|c|}
\hline
 & {\u\omega}_s & {\u\omega}_u & {\u\omega}_\tau & {\u\omega}_g & {\u\omega}_a& {\u\omega}_c\\
\hline
{\rm Symplectic} & 0.48-0.50 & 0.48-0.51 & 0.48-0.51 & 0.52-0.56 & 0.45-0.48 & 0.37-0.40\\
{\rm Mixed} &      0.54-0.59 & 0.46-0.50 & 0.47-0.53 & 0.65-0.70 &0.44-0.48&0.49-0.53\\
{\rm Dissipative} &0.64-0.74 & 0.65-0.69 & 0.59-0.64 & 0.67-0.76& 0.63-0.69 & 0.64-0.69 \\
\hline
\end{array}
$$
\caption{Radii of the annular regions that encompasses most of the poles of the Pad\'e and log-Pad\'e
approximants for different frequency vectors.
The approximants were computed using Lindstedt series of order at least  $N=128$.
 {Map \equ{SMS}-\equ{W} with $\gamma=0.01$.
Dissipative: $\lambda_1 = 0.8$, $\lambda_2 = 0.7$. Mixed: $\lambda_1=1$, $\lambda_2 = 0.8$.
Symplectic: $\lambda_1= 1$, $\lambda_2 = 1$.}}\label{tab:omegas}
\end{table}
}

Furthermore, we provide in Table~\ref{tab:gammas} some results
concerning the dependence of the thresholds on the coupling
parameter. Precisely, we consider the frequency $\u\omega_s$ and
we vary the coupling parameter $\gamma$ on a set of values  {from
$10^{-4}$ to 0.5. We provide the breakdown threshold  {again as an
interval} computed by looking at the poles of the Pad\'e and
log-Pad\'e approximants, further validated by the computation of
the radius of convergence of the Lindstedt coefficients. As it is
expected, the values decrease as the coupling parameter
increases.}

\begin{table}[h]
 {$$
\begin{array}{|c|c|c|c|}
\hline
 \gamma & {\rm Symplectic} & {\rm Mixed} & {\rm Dissipative} \\
\hline
0.0001 & 0.57-0.64 & 0.57-0.71 &  0.64-0.78  \\
0.0005 & 0.56-0.62 & 0.56-0.68 & 0.64-0.78 \\
0.001 & 0.56-0.58 & 0.56-0.64 & 0.64-0.78  \\
0.005 & 0.50-0.52 & 0.55-0.59 & 0.64-0.74    \\
0.01 & 0.48-0.50 & 0.54-0.59  & 0.64-0.73   \\
0.05 & 0.43-0.45 & 0.45-0.52 & 0.61-0.66   \\
0.1 & 0.40-0.43 & 0.40-0.50 & 0.57-0.63 \\
0.25 & 0.32-0.38 & 0.33-0.40 & 0.46-0.54  \\
0.5 & 0.21-0.29 & 0.26-0.31 & 0.35-0.44 \\
\hline
\end{array}
$$}
\caption{ {Radii of the annular regions that encompasses most of
the poles of the Pad\'e and log-Pad\'e approximants for the
 {map \equ{SMS}-\equ{W},} the frequency $\u\omega_s$ and for
different values of the coupling parameter $\gamma$. The
approximants were computed using Lindstedt series of  order at
least  $N=128$. Dissipative: $\lambda_1 = 0.8$, $\lambda_2 = 0.7$.
Mixed: $\lambda_1=1$, $\lambda_2 = 0.8$. Symplectic: $\lambda_1=
1$, $\lambda_2 = 1$.}} \label{tab:gammas}
\end{table}

We conclude by mentioning some results for potentials with two
harmonics and a coupling, say $V_{1hc}(x, z) = \sin(x) +
\frac{1}{3}\sin(3x) +\gamma\sin(x - z)$, $V_{2hc}(x, z) = \sin(z)
+ \frac{1}{3}\sin(3z) - \gamma\sin(x - z)$. The plots of
Figure~\ref{fig:omega-tau} show the results for the map \equ{SMS}
in the symplectic case with frequency $\u\omega_s$ and
$\gamma=0.01$. The poles of the Pad\'e and log-Pad\'e approximants
are computed up to the order $N=85$ with a precision of at least
50 digits. The  {estimate of the breakdown threshold is definitely
complicated by the very irregular behaviour of the poles that
appears in the two-harmonic potential.}

\begin{figure}[ht]
\centering
\includegraphics[trim = 8cm 9cm 8cm 10cm, width=3.4truecm]{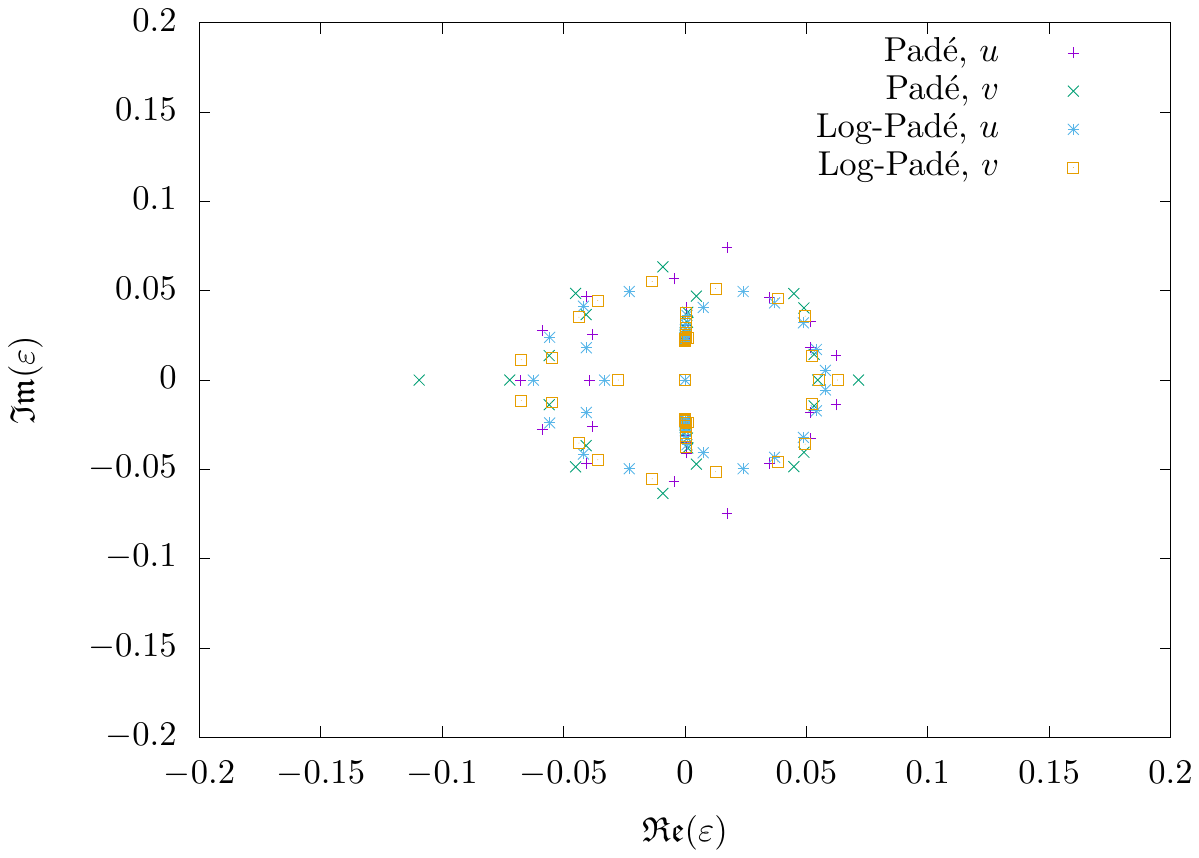}\hfil\hfil
\includegraphics[trim = 8cm 9cm 8cm 10cm, width=3.4truecm]{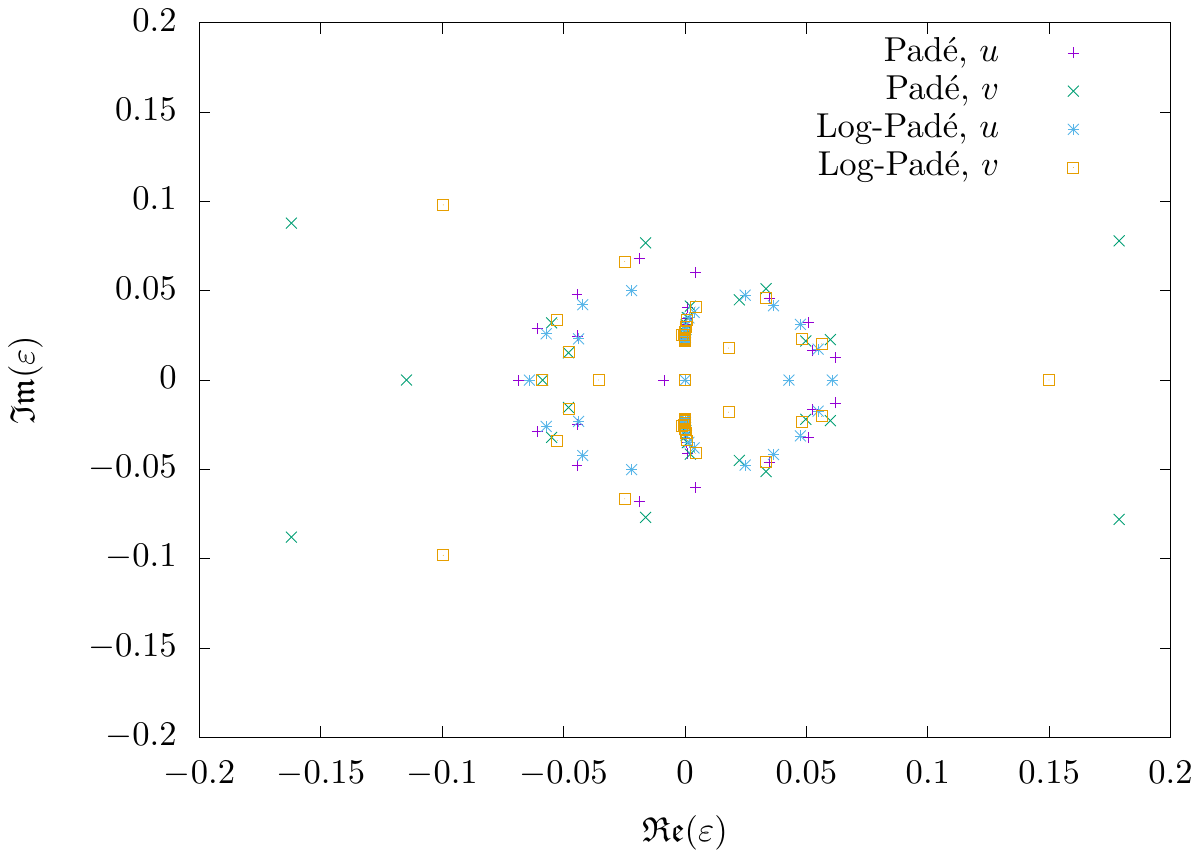} \\
\includegraphics[trim = 8cm 10cm 8cm 10cm, width=3.4truecm]{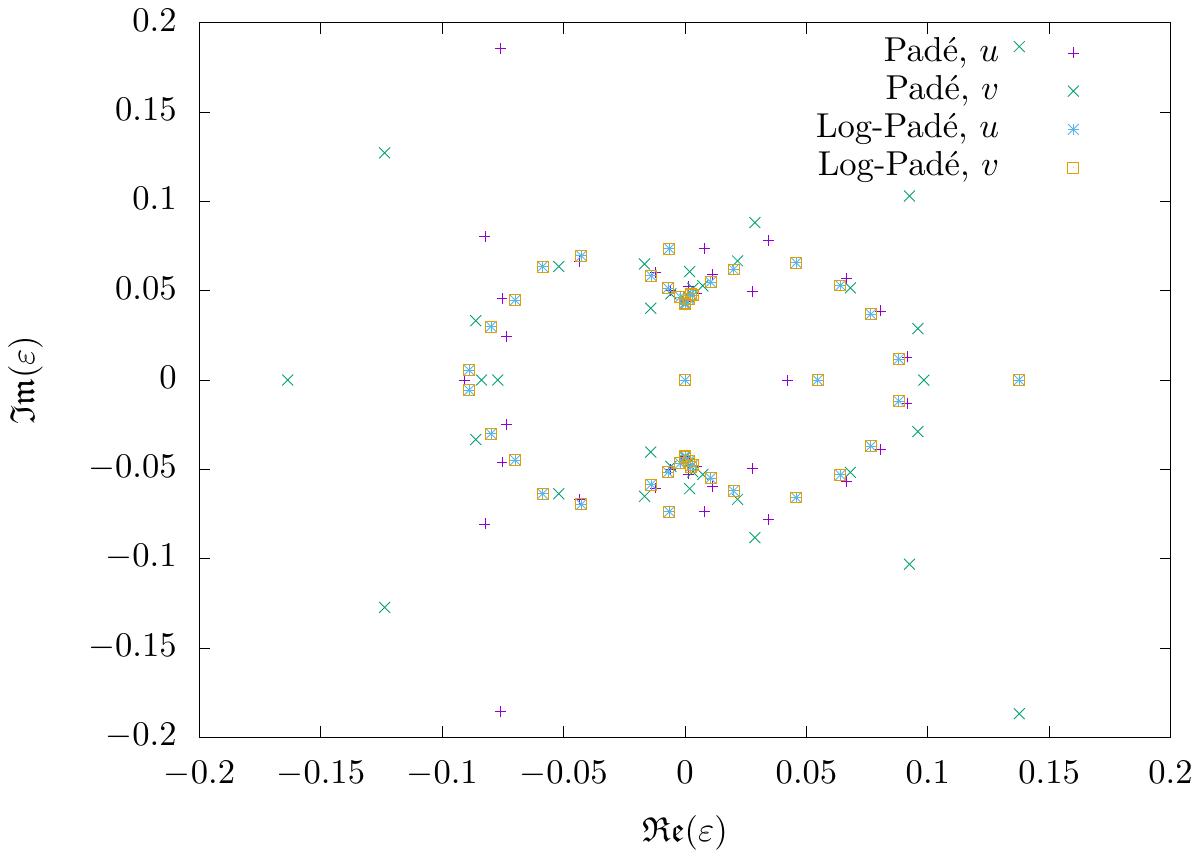}\hfil\hfil
\includegraphics[trim = 8cm 10cm 8cm 10cm, width=3.4truecm]{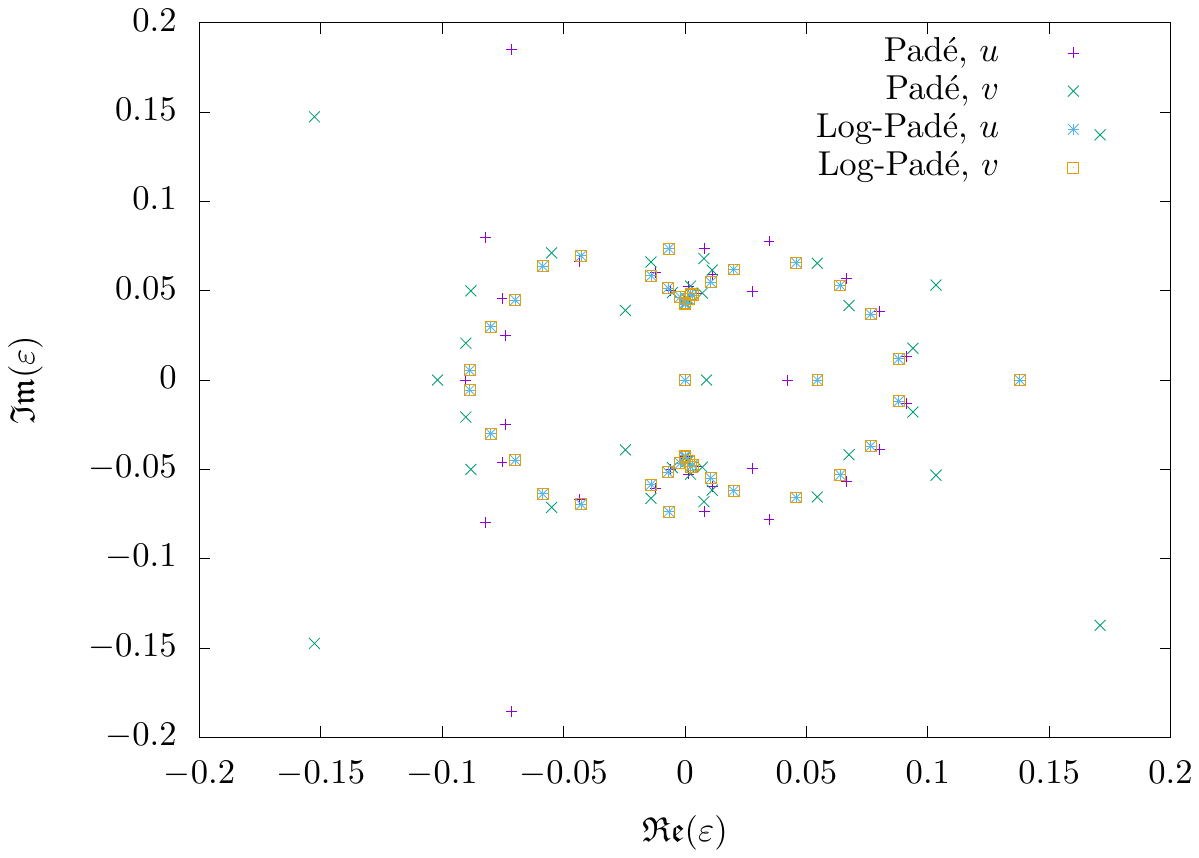}
\caption{Map \equ{SMS} with the 2-harmonic potential $V_{1hc}(x, z) = \sin(x)
+ \frac{1}{3}\sin(3x) +\gamma\sin(x - z)$,
$V_{2hc}(x, z) = \sin(z) + \frac{1}{3}\sin(3z)
- \gamma\sin(x - z)$, $\u\omega_s$,
$\gamma = 0.01$. Poles of Pad\'e and log-Pad\'e approximants using Lindstedt series of order $N=85$.
Upper left panel: $\lambda_1 = 1$, $\lambda_2=1$.
Upper right panel: $\lambda_1 = 1$, $\lambda_2=0.8$.
Lower left panel: $\lambda_1 = 0.8$, $\lambda_2=0.8$.
Lower right panel: $\lambda_1 = 0.8$, $\lambda_2=0.7$.}
\label{fig:omega-tau}
\end{figure}

\section{Newton method and Sobolev criterion}\label{sec:newton}

This Section is devoted to the presentation of Newton method for
the 2D  {(Section~\ref{sec:newton2d})} and 4D cases
 {(Section~\ref{sec:newton4d})}; we also describe the Sobolev
criterion in 4D  {(Section~\ref{sec:Sobolev}, see
\cite{CallejaC10} for the Sobolev criterion in 2D)} to find the
breakdown threshold and we provide some results for the 2D
 {(Section~\ref{sec:newres2d})} and 4D maps
 {(Section~\ref{sec:newres4d})}.

\subsection{Newton method for 2D maps}\label{sec:newton2d}
For the dissipative 2D standard map \eqref{DSM2}, we use a Newton
method that has been implemented very successfully in
\cite{CallejaC10}. It consists in finding a hull function  {$\u
P:\torus \longrightarrow \real$} and a drift parameter $\mu\in
\real$ satisfying equation  {\equ{eqP}}. The details about the
construction and implementation of the Newton method  {in the 2D
case} can be found in \cite{CallejaC10}.

\subsection{Newton method for 4D maps}\label{sec:newton4d}
In the 4D case, we present a procedure borrowed from \cite{CallejaCdlL2013}
to find, through a Newton method, the parameterization of an
invariant torus. In the following lines we give a brief
description of the construction of the corrections at each step of
the Newton method.

Consider the map  {$\u f_{\u\mu}: \torus^2\times \real^2
\longrightarrow \torus^2 \times \real^2$ given by \equ{SMS}.} In
this Section, we limit to consider a family of conformally
symplectic maps ${\u f}_{\u \mu}$ with conformal factor $\lambda$
equal for both components $y$ and $w$. We remark that the
extension to the case of different conformal factors, say
$\lambda_1$, $\lambda_2$, cannot be directly derived from
\cite{CallejaCdlL2013},  {but one could rather use one of the many
algorithms implemented and run in \cite{CH2}}.

We look for a parameterization ${\u K}:\torus^2\longrightarrow
\torus^2\times \real^2$ and a constant vector ${\u \mu}\in\real^2$
satisfying the invariance equation  {\equ{inv}.} Note that if such
${\u K}$ and ${\u \mu}$ satisfying  {\equ{inv}} exist, then there
exists an invariant torus, parameterized by ${\u K}$, whose
dynamics is conjugated to a rotation by ${\u \omega}$.  {Notice
also that $\u K$ needs to satisfy a normalization condition, due
to the fact that the solution of \equ{inv} is not unique
(\cite{CallejaCdlL2013}).}

The Newton method we use takes advantage of the so-called
``automatic reducibility": in a neighborhood of an invariant torus
there exists an explicit change of coordinates that makes the
linearization of the invariance equation \eqref{inv}
into an equation with constant coefficients (see equation
\equ{system} below). That is, defining  $M({\u \theta})$ as the
4$\times$4 matrix \begin{equation}\label{M}
    M({\u \theta}) = [D{\u K}({\u \theta}) | J^{-1}D{\u K}({\u \theta})N({\u \theta})]\ ,
\end{equation}
where $N({\u \theta}) = [D{\u K}({\u \theta})^\top D{\u K}({\u \theta})]^{-1}$, then,
if ${\u K}$ and ${\u \mu}$ satisfy \eqref{inv}, one has that
\begin{equation}\label{aut-red}
    D{\u f}_{{\u \mu}}\circ {\u K}({\u \theta}) M({\u \theta}) =
    M({\u \theta}+2\pi {\u \omega})\left(\begin{array}{cc} Id & S({\u \theta})  \\ 0 & \lambda Id
    \end{array} \right)\ ,
\end{equation}
where $S({\u \theta})$ is an explicit function depending on $D{\u
K}({\u \theta})$ and $D{\u f}_{{\u \mu}}\circ {\u K}({\u \theta})$
{(see \cite{CallejaCdlL2013} for the explicit formulation of the
function $S$)}.

Next, we briefly describe the quasi-Newton method used to find
solutions of \eqref{inv}; a detailed exposition of such
method can be found in \cite{CallejaCdlL2013}. We start with an approximate
solution of \eqref{inv}, say
$$
    {\u f}_{{\u \mu}}\circ {\u K} - {\u K}\circ T_{\u \omega} = {\u E}\ ,
$$
 {where $T_{\u \omega}({\u \psi}) = {\u \psi} +2\pi {\u \omega}$ and} the error ${\u E}$ in the approximation is supposed to be
small. The Newton method introduced in \cite{CallejaCdlL2013} consists in
finding corrections $\u \Delta$ and $\u \sigma$ to $\u K$ and $\u
\mu$, respectively, such that the approximate solution of
\eqref{inv} associated to ${\u K}+{\u \Delta}$, ${\u
\mu} + {\u \sigma}$ quadratically reduces the error. Taking into
account that
$$
    {\u f}_{{\u \mu} + {\u \sigma}} \circ ({\u K}+{\u \Delta}) = {\u f}_{{\u \mu}}\circ {\u K} +
    [D{\u f}_{{\u \mu}}\circ {\u K}]{\u \Delta} + [D_{\u \mu} {\u f}_{{\u \mu}}\circ {\u K}]
    {\u \sigma} + O(\|{\u \Delta}\|^2) + O(\|{\u \sigma}\|^2)\ ,
$$
the Newton method consists in finding $\u \Delta$ and $\u \sigma$
satisfying \begin{equation}\label{newton-eq}
    [D{\u f}_{{\u \mu}}\circ {\u K}]{\u \Delta} -{\u \Delta}\circ T_{\u \omega} + [D_{\u \mu}
    {\u f}_{{\u \mu}}\circ {\u K}]{\u \sigma} = - {\u E}\ .
\end{equation}
Instead of solving \eqref{newton-eq}, the main idea in \cite{CallejaCdlL2013} is to use the automatic reducibility
to find an approximate solution of \eqref{eqP}, that still leads to a quadratically convergent procedure. Using the matrix $M$
defined in \eqref{M}, a change of variables is introduced by setting
$$
    {\u \Delta} = M{\u W}\ .
$$
In the new unknowns ${\u W}$ and ${\u \sigma}$, equation \eqref{newton-eq} transforms into
$$
    [D{\u f}_{{\u \mu}}\circ {\u K}]M{\u W} -(M\circ T_{\u \omega})({\u W}\circ T_{\u \omega}) + [D_{\u \mu} {\u f}_{{\u \mu}}\circ
    {\u K}]{\u \sigma} = -{\u E}\ .
$$
Then using \eqref{aut-red}, and ignoring an error term in \eqref{aut-red} coming from the fact that ${\u K}$, ${\u \mu}$
is an approximate solution to \eqref{inv}, one obtains
\begin{equation}\label{quasi-newton}
    M\circ T_{\u \omega} \left[ \left(\begin{array}{cc} Id & S(\theta) \\ 0 & \lambda Id \end{array}\right){\u W} -
    {\u W}\circ T_{\u \omega} \right] + \left[ D_{\u \mu} {\u f}_{{\u \mu}}\circ {\u K}\right]{\u \sigma} = -{\u E}\ .
\end{equation}
As it is noted in \cite{CallejaCdlL2013}, equation \eqref{quasi-newton} reduces to difference equations with constant coefficients,
so that it can be solved efficiently by using Fourier methods. Using the notation
${\u W}=(W_1,W_2)^\top$, $\tilde{{\u E}} = (M^{-1}\circ T_{\u \omega}) {\u E}:= (\tilde{{\u E}}_1, \tilde{{\u E}}_2)^\top $, and
$\tilde{A}
= (M^{-1}\circ T_{\u \omega})D_{\u \mu} {\u f}_{{\u \mu}}\circ {\u K} := [\tilde{A}_1|\tilde{A}_2] $,
equation \eqref{quasi-newton} can be expressed in components as
\begin{eqnarray}
    {\u W}_1 - {\u W}_1\circ T_{\u \omega} &=& -S{\u W}_2 -\tilde{{\u E}}_1 - (\tilde{A}{\u \sigma})_1 \nonumber \\
    \lambda {\u W}_2 - {\u W}_2\circ T_{\u \omega} &=& - \tilde{{\u E}}_2 - (\tilde{A}{\u \sigma})_2\ .
    \label{system}
\end{eqnarray}
The system of equations \eqref{system} has an upper triangular structure and the way to solve it is summarized in Algorithm \ref{algorithm-newton}
described in Appendix~\ref{apend:newton-algorithm}.

Implementing the Newton method to find approximations of ${\u K}$,
${\u \mu}$, and computing suitable norms of such approximations
will allow us to get information on the breakdown threshold of
invariant attractors (see Section \ref{sec:Sobolev}).  {The
algorithm to implement one step of the Newton method, thoroughly
explained in \cite{CallejaCdlL2013} (see Appendix
\ref{apend:newton-algorithm} for the sake of completeness), is
very efficient, since all steps involve only diagonal operations
in the Fourier space and/or diagonal operations in the real space.
Moreover, if we represent a function in discrete points or in
Fourier space, we can compute the other functions through a FFT;
using $N$ Fourier modes to discretize the function, then we need
only $O(N)$ storage and $O(N \log N)$ arithmetic operations.}

\subsection{Sobolev criterion for the 4D case}\label{sec:Sobolev}
Given a periodic function $f$ on $\torus^2$, we consider a sample
of points on a regular grid of size $\underline{L} = (L_1, L_2)$ as
\begin{equation}\nonumber
    \u\psi_j := (\psi_{j_1}, \psi_{j_2}) = \left( \frac{2\pi j_1}{L_1} , \frac{2\pi j_2}{L_2} \right),
\end{equation}
where $\underline{j}=(j_1, j_2)\in\integer^2$ and $0 \leq j_1 <
L_1$, $0\leq j_2 < L_2$. The total number of points is given by
$L_D = L_1L_2$. To implement numerically the Newton step described
in Section~\ref{sec:newton4d}, we consider the Fourier series
{expansion of} $f(\u\psi) =  \sum_{\underline{k}'}
\hat{f}_{\underline{k}'} e^{i \underline{k}'\cdot {\u\psi}}$ with
the multi-index $\underline{k}'=(k'_1, k'_2)$ given as
follows:\begin{equation}\nonumber
    k'_l = \left\{\begin{array}{cl}
         k_l & \mbox{if } 0\leq k_l \leq L_l/2  \\
       k_l-L_l  & \mbox{if } L_l/2 < k_l < L_l
    \end{array} \right. .
\end{equation}
Note that the truncated Fourier series coincides with the Discrete Fourier Transform on the points of the grid. Following \cite{Haro2016},
we also introduce the tail in the
 {$l$-th angle $\psi_l$ with $l=1,2$} of the truncated Fourier series as follows:\begin{equation}\nonumber
    \mbox{tail}_l(\{\hat{f}_{\underline k}\}):= \sum_{\mathcal{C}_l } |\hat{f}_{\underline k}|, \qquad l=1,2\ ,
\end{equation}
where $\mathcal{C}_l$ is defined as the set of multi-indices
\begin{equation*}
    \mathcal{C}_l = \left\{{\underline k}=(k_1, k_2):\ \frac{L_l}{4} \leq k_l \leq \frac{3L_l}{4}\right\}.
\end{equation*}
To control the quality of the approximation we ask this tail to be small.

The rigorous results described in \cite{CallejaCdlL2013} and
\cite{calleja2009-thesis} provide an algorithm to approximate the
analyticity breakdown. Given a well-behaved approximate solution
to \eqref{eqP}, there are true solutions in a neighborhood and,
moreover, close to the breakdown the Sobolev semi-norms  {of high
enough order} must blow up. For a function $f:\torus^2 \rightarrow
\real$,  {we introduce the $L^2$-norm as} $\|f\|_{L^2}:=
(\sum_{\u   k\in \integer^2} |\hat f_{\u k}|^2)^{1/2} $. Then,
following \cite{Bla-Lla-13}, we define the semi-norms
$$
    \|f\|_{r, ||} := \|(2\pi \underline{\omega}\cdot \nabla)^r f\|_{L^2} \ ,\qquad
    \|f\|_{r, \perp} := \|(2\pi \underline{\omega}^\perp \cdot \nabla)^r f\|_{L^2}\ ,
$$
where  {$\u\omega= (\omega_1, \omega_2)$ and $\u\omega^\perp =
(-\omega_2,\omega_1)$. These norms provide information about the
regularity of the torus on a single direction ($\u\omega$ or
$\u\omega^\perp$). The blow up of any of the norms we consider
implies the blow up of the Sobolev norm $\|\nabla^r f\|_{L^2}$. We
have introduced these semi-norms, because it is easier to compute
them numerically and because they also provide information of the
directions in which the derivatives blow up faster.} For
trigonometric polynomials, say $f^{(N)}(\u\psi) =
\sum_{|\underline{k}|\leq N} \hat{f}_{\underline{k}}
e^{i\underline{k}\cdot \underline{\psi}}$, the semi-norms are
computed as follows:\begin{equation*}
    \|f^{(N)}\|_{r,||} = \left( \sum_{|\underline{k}|\leq N} (2\pi \underline{\omega}\cdot \underline{k})^{2r}|\hat{f}_{\underline{k}}|^2\right)^{1/2}, \qquad
    \|f^{(N)}\|_{r,\perp} = \left( \sum_{|\underline{k}|\leq N} (2 \pi \underline{\omega}^\perp \cdot \underline{k})^{2r}|\hat{f}_{\underline{k}}|^2\right)^{1/2}.
\end{equation*}
The domain of existence of invariant tori can be computed using an
approximate solution $\underline{K}_\varepsilon$,
$\underline{\mu}_\varepsilon$ of equation \eqref{inv} with
$\underline{K}_\varepsilon = (K_1, K_2, K_3, K_4)$ represented by
trigonometric polynomials; a regular behavior of the Sobolev norms
of $\underline{K}_\varepsilon$, as the parameter $\ep$ increases, provides
evidence of the existence of the quasi-periodic orbit. The
algorithm to identify the boundary of the existence domain can be
described as follows:
\begin{algorithm} \label{algo:continuation}Use $\underline{K}_0$, $\underline{\mu}_0$ for the integrable case ($\ep =0$).
\begin{itemize}
    \item[] \textbf{Repeat}
    \item[] \begin{itemize}
        \item[] Increase the parameter $\ep$ along the positive real axis
        \item[] Run the Newton step (Algorithm \ref{algorithm-newton} in Appendix~\ref{apend:newton-algorithm})
        \item[] \textbf{If} iteration of the Newton step does not converge
        \begin{itemize}
            \item[] decrease the increment in the parameter $\ep$
        \end{itemize}
        \item[ ]\textbf{Else} (Iteration success)
        \begin{itemize}
            \item[] Record the values of the parameters and compute the semi-norms of the solution
            \item[]\textbf{If} for any index $l$, any of the tails $\mbox{tail}_l(\{\hat{K}_{k,i}\}) $ exceeds a given value
            \begin{itemize}
                \item [] Double the number of Fourier coefficients in the $l$-angle
            \end{itemize}
        \end{itemize}
    \end{itemize}
    \item[] \textbf{Until} one of the Sobolev seminorms of the approximate solution exceeds a given value
\end{itemize}
\end{algorithm}

\begin{remark}
We note that Algorithm  \ref{algo:continuation}  is also used for
the 2D case. The only change is that there is only one tail for
the hull function that solves \eqref{eqP}. Also, in
this case the Sobolev norm is defined  as $\|u\|_r:=
\|\partial_\theta^r u\|_{L^2}  $, and for a trigonometric
polynomial $u^{(N)}(\theta) = \sum_{|k|\leq N} \hat{u}_k
e^{ik\theta} $ the norm is computed as $\|u^{(N)}\|_r =
\left(\sum_{|k|\leq N} k^{2r} |\hat{u}_k|^2 \right)^{1/2}$.
\end{remark}

\subsection{Results for the 2D case}\label{sec:newres2d}
The results of the implementation of the Newton method and the
Sobolev criterion for the 2D case are summarized in Figure
\ref{fig:comparison-2d-1harm}, which shows the Sobolev norm  {of
the hull function $u$ appearing in  \equ{u2D}}, providing an
estimate of the breakdown threshold which is consistent with the
value 0.758 which was found in Figure~\ref{fig:1os2dim} in the
case $\omega=s^{-1}$, $\lambda=0.8$,  {$V(x)=\sin x$ (left plot)}.
The estimate of the threshold is consistent with  {0.189} for the
two harmonics case  {$V(x)=\sin (x) + \frac{1}{3} \sin (3x)$
(right plot)}. Additional details for different values of
$\varepsilon$ on the norm of the hull function and the norm of the
error are given in Table~\ref{tab:tab4}.

\begin{figure}[ht]
    \centering
   \includegraphics[trim = 8cm 10cm 8cm 10cm, width=3.4truecm]{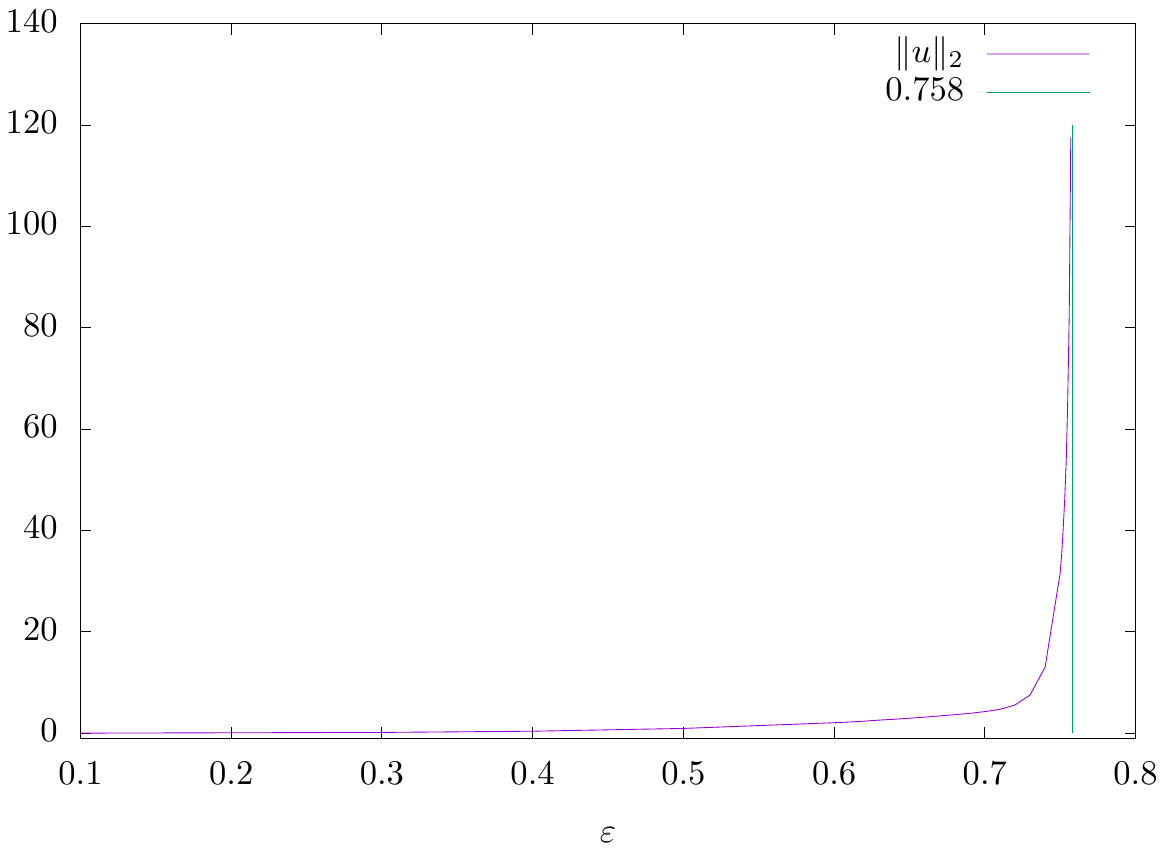}\hfil\hfil\hfil
    \includegraphics[trim = 8cm 10cm 8cm 9cm, width=3.4truecm]{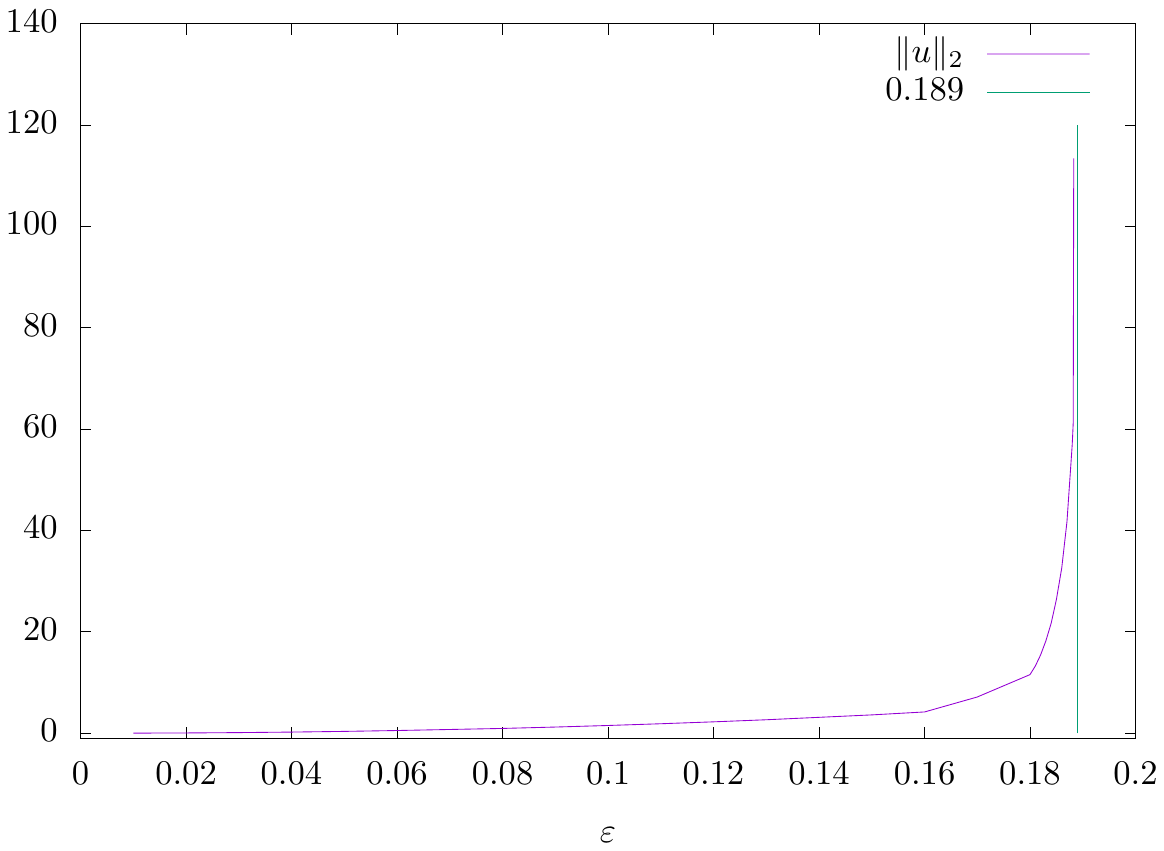}
    \caption{Map \equ{DSM2} with $\omega=s^{-1}$, $\lambda = 0.8$,  {graph of $\|u\|_2$.
Left: $V(x)=\sin x$. Right: $V_{2h}(x)=\sin (x) + \frac{1}{3} \sin (3x)$.}}
\label{fig:comparison-2d-1harm}
\end{figure}

\begin{table*}[h]
  \centering
\caption{Sobolev norms of the map \equ{DSM2} for different values of the continuation using two different potentials.
Parameters: $\omega =s^{-1}$, $\lambda = 0.8$. We have only included the values of $\ep$ for which the number  of Fourier modes were doubled. $L$ denotes the number of Fourier modes. }
\label{tab:gam}

\begin{tabular}{cc}
\hglue-1cm $V(x) = \sin (x)$  & $V(x) = \sin (x) + \frac{1}{3}\sin(3x) $\\

\begin{tabular}{|c|c|c|c|}
  \hline
  $\varepsilon$ & $\|u\|_2 $ & $\| E \|_\infty$ & $L$ \\
  \hline
0.1 & 0.040  & 8.7e-10  & 64 \\
0.3 & 0.169 & 8.0e-15  &  128\\
0.5 & 0.968 & 5.91e-12  & 256\\
0.6 & 2.082 & 8.0e-10  & 512\\
0.62 & 2.405 & 2.4e-13 & 1024 \\
0.68 & 3.664 & 1.4e-12 & 2048 \\
0.72 & 5.566 & 1.0e-10 & 4096 \\
0.75 & 31.638 & 1.6e-10 & 8192 \\
0.751 & 35.510 & 9.1e-13 & 16384 \\
0.754 & 54.069 & 4.3e-12 & 32768 \\
0.756 & 82.410 & 5.5e-11 & 65536 \\
0.757 & 117.512 & 4.3e-10 & 131072 \\
\hline
\end{tabular} &

\begin{tabular}{|c|c|c|c|}
  \hline
  $\varepsilon$ & $\|u\|_2 $ & $\| E \|_\infty$ & $L$ \\
  \hline
0.01 & 0.018  & 1.2e-12  & 64 \\
0.02 & 0.063  & 2.7e-12  & 128\\
0.05 & 0.377 & 1.6e-11  & 256\\
0.09 & 1.235 & 8.1e-11  & 512 \\
0.13 & 2.671 & 7.2e-11 & 1024 \\
0.16 & 4.218 & 1.5e-13 & 2048 \\
0.18 & 11.59 & 1.5e-10 & 4096 \\
0.183 & 18.198 & 2.1e-13 & 8192 \\
0.186 & 32.664 & 5.2e-11 & 16384 \\
0.188 & 56.853 & 9.7e-10 & 32768 \\
0.1881 & 58.969 & 2.5e-10 & 65536 \\
0.1882 & 61.253 & 2.8e-10 & 131072 \\
0.1883 & 113.47 & 3.2e-10 & 262144 \\
\hline
\end{tabular}

\end{tabular}
\label{tab:tab4}

\end{table*}

\subsection{Results for the 4D case}\label{sec:newres4d}
We consider the conformally symplectic case with
$\lambda_1=\lambda_2=0.8$. We only monitored the growth of the
Sobolev norms for the first two components of the parameterization
$K= (K_1,K_2,K_3,K_4)$  {with $K_1(\u\psi)=\psi_1+u(\u\psi)$,
$K_2(\u\psi)=\psi_2+v(\u\psi)$ as in \equ{param} with $\u
P=(u,v)$.} The results are presented in Table
\ref{tab:Sobnorms-4d-omega-spiral} and Figure
\ref{fig:sob-norms-4d-omega-spiral}.

 {The data included in Table~\ref{tab:Sobnorms-4d-omega-spiral} suggest  an anisotropic breakdown, a phenomenon that has been observed before in \cite{Bla-Lla-13}:
the derivatives of $K$ blow-up faster in the direction of $\u\omega$, than they do in the direction of $\u\omega^\perp$.
We remark that we stopped the continuation of the Newton method at the last value reported in
Table~\ref{tab:Sobnorms-4d-omega-spiral}, since the computational time was going beyond our computer limits.
However, we believe that it is reasonable to expect that the blow-up occurs within the
vertical lines at $\varepsilon=0.64$ and $\varepsilon=0.71$ shown in Figure~\ref{fig:sob-norms-4d-omega-spiral},
left panel, which correspond to the radii of the inner and outer circles of the right panel.}

 {
\begin{table*}[h]
  \centering
\caption{ $\lambda_1=\lambda_2=0.8$,
$\gamma=0.01$, $\u\omega_s$. Values of the
Sobolev norms of the first two components of the approximate
solution $K=(K_1, K_2, K_3, K_4)$. The iteration is successful in the
Algorithm  when the error of the
approximation is $<10^{-9}$. The number of Fourier coefficients in
the $l$-angle is doubled when $\mbox{tail}_l < 10^{-13}$. $L_l$ denotes the number of Fourier modes used in the angle $l$.}
\label{tab:Sobnorms-4d-omega-spiral}
\begin{tabular}{|c|c|c|c|c|c|c|c|}
\hline
$\ep$ & $\|K_1\|_{2,||}$ & $\|K_2\|_{2,||}$ & $\|K_1\|_{2,\perp}$ & $\|K_2\|_{2,\perp}$  & $L_1$ & $L_2$ & $\|E\|_\infty$ \\
\hline
 0.05 & 0.056 & 0.453 & 0.304 & 0.083 & 32 & 32 & 3.3e-12\\
 0.10 & 0.117 & 0.907 & 0.635 & 0.168 & 64 & 64 & 5.1e-12 \\
 0.15 & 0.204 & 1.371 & 1.107 & 0.254  & 64 & 64 & 8.6e-12  \\
 0.20 & 0.355 & 1.890 & 1.921 & 0.351   & 64 & 128 & 1.6e-11 \\
 0.25 & 0.607 & 2.608 & 3.281 & 0.485  & 128 & 128 & 6.2e-11 \\
 0.30 & 0.991 & 3.815 & 5.360 & 0.709  & 128 & 128 & 1.8e-10 \\
 0.35 & 1.538 & 5.901 & 8.314 & 1.097  & 128 & 128 & 5.3e-10   \\
 0.40 & 2.278 & 9.285 & 12.311 & 1.726 & 256 & 256 & 1.1e-16 \\
 0.45 & 3.247 & 14.433 & 17.537 & 2.687  & 256 & 256 & 1.1e-15  \\
 0.50 & 4.505 & 22.041 & 24.203 & 4.142  & 256 & 256 & 1.5e-14 \\
 0.55 &  6.396 & 34.398 & 32.564 & 6.813  & 512 & 512 & 5.2e-14 \\
 0.60 & 11.866 & 66.153 & 43.077 & 15.186 & 512 & 512 & 6.5e-10 \\
 0.65  & 84.682 & 226.140 & 64.864 & 60.968 & 1024 & 1024 & 2.2e-12 \\
 0.65625 & 134.608 &  296.754 & 76.873 & 83.737 & 1024 & 2048 & 7.5e-13  \\
\hline
\end{tabular}
\end{table*}
}

\begin{figure}[h]
    \centering
    \includegraphics[trim = 8cm 10cm 8cm 10cm, width=3.4truecm]{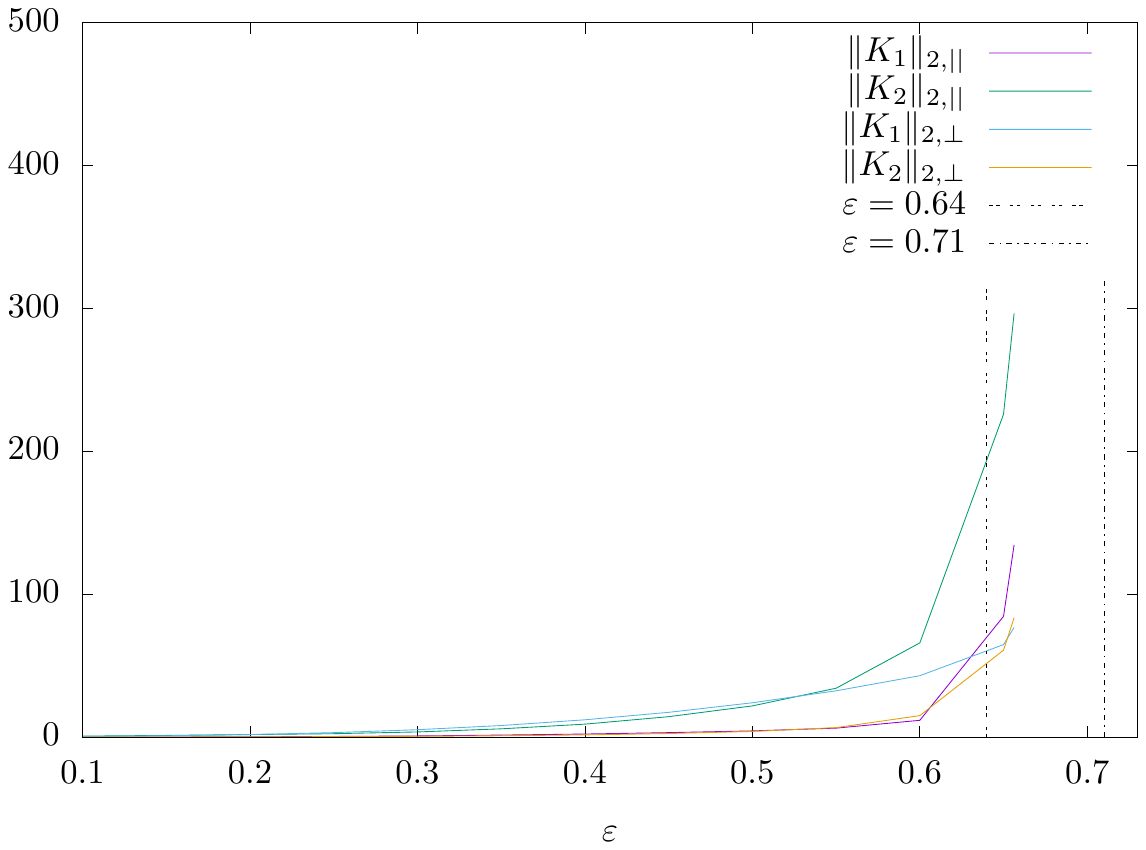}\hfil\hfil
    \includegraphics[trim = 8cm 10cm 8cm 10cm, width=3.4truecm]{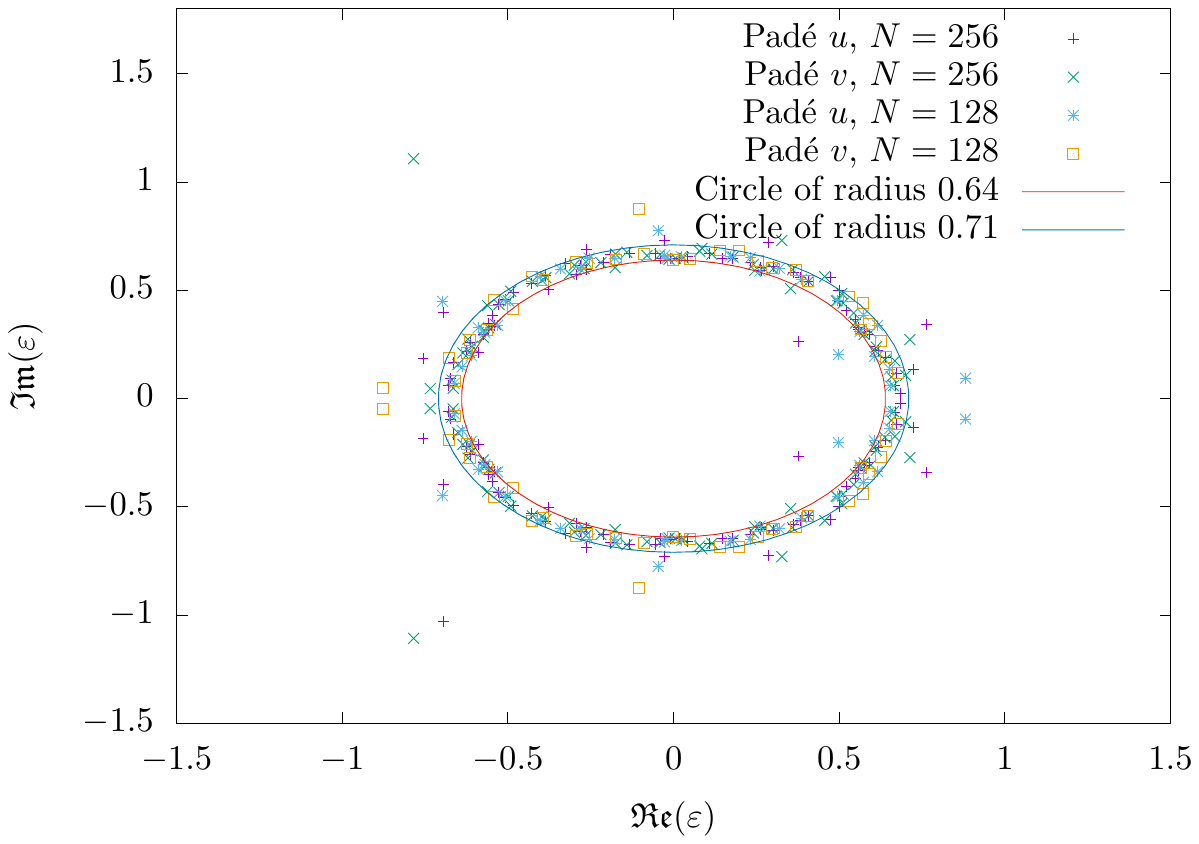}
    \caption{ {Map  {\equ{SMS} with $\u\omega_s$}, $\lambda_1=\lambda_2=0.8$,
$\gamma=0.01$.
Left panel: plots of $\ep$ vs $\|K_1\|_{2,||}$ ,
$\|K_2\|_{2,||}$, $\|K_1\|_{2,\perp}$, $\|K_2\|_{2,\perp}$, using the data in Table
\ref{tab:Sobnorms-4d-omega-spiral}. Right panel: poles of Pad\'e and log-Pad\'e
approximants using a Lindstedt series at orders $N=128$ and $N=256$. }}
\label{fig:sob-norms-4d-omega-spiral}
\end{figure}

 {\section{Approximation of the tori through periodic
orbits}}\label{sec:periodic}

 {In this Section, we compute the periodic orbits approximating
the tori with irrational frequency}. In fact, the well-known
numerical method developed by J. Greene in \cite{Greene1979} for
the determination of the stochastic transition  {of invariant
tori} is based on the conjecture that the tori breakdown when the
periodic orbits, with frequency equal to the rational approximants
to the frequency of the torus, change from stability to
instability. Greene method was originally developed for the
symplectic standard map and later used for other models, including
the 2D dissipative standard map in \cite{CallejaC10} . The
application of an extension of Greene method to dissipative
systems is made difficult by the fact that periodic orbits exist
for a whole interval of the parameters, the so-called \sl Arnold
tongues \rm (\cite{Arnold}).  {Moreover}, in the conservative case
in 2D one can take advantage of the existence of symmetry lines to
ease the search of periodic orbits; instead, symmetry lines do not
exist in the dissipative case (and they were not needed for the
computation of the Pad\'e approximants or the Sobolev norms);
 {in the 4D case, even if they exist, it is required to
implement two dimensional zero finders}. Furthermore, the
application of the extension of Greene method to higher
dimensional systems (like the 4D standard map) is not trivial,
since the search for the approximating periodic orbits might be
computationally difficult, as shown for example in \cite{CCGL20c}.

In the rest of this Section, we  {recall some results on Greene
method for dissipative systems (Section~\ref{sec:greene1}) and we}
implement a numerical method with the aim to apply an extension of
Greene method (Sections~\ref{sec:greene2d} and
\ref{sec:greene4d}).

\subsection{An extension of Greene method for dissipative systems}\label{sec:greene1}

A method for computing the breakdown threshold of quasi-periodic
solutions was presented in \cite{Greene1979} for the 2D
{symplectic} standard map, satisfying the twist condition. Greene
method is based on the assertion that the invariant circle exists
if and only if the approximating periodic orbits are at the
boundary of linear stability. A partial justification in the
symplectic case was presented in \cite{FalcoliniL92},
\cite{McKay92}, which showed that when the invariant torus exists,
one can obtain bounds on a quantity called \sl residue, \rm which
is a measure of stability of the approximating periodic orbits. {A
Greene-like method has been applied} in \cite{CallejaC10} to the
dissipative standard map, while \cite{CCFL14} extends the method
to conformally symplectic and dissipative systems in any
dimension, and provides a partial justification. The difference
with the symplectic case is that the conformally symplectic
systems need adjusting parameters and moreover they do not admit
Birkhoff invariants. The results in \cite{CCFL14} show that if the
rotational torus exists, one can predict the eigenvalues of the
approximating periodic orbits for values of the parameters close
to that of the torus; it gives also results on Arnold tongues. Two
proofs are given, the first one based on deformation theory and
the second one combining the theory of normally hyperbolic
invariant manifolds with averaging theory.  A property of
conformally symplectic systems is the \sl pairing rule, \rm which
states that the eigenvalues of a $d$-dimensional conformally
symplectic matrix with conformal factor $\lambda$ are paired;
precisely, if $\chi_i$, $i=1,...,d$, is an eigenvalue, then
$\lambda\chi_i^{-1}$ is also an eigenvalue.
The second proof of \cite{CCFL14} using the theory of normally
hyperbolic invariant manifolds applies to general systems with a
normally hyperbolic rotational circle and concludes that the
periodic orbits have $d$ eigenvalues close to 1. When the system
is conformally symplectic, using the paring rule, we recover the
same result as before. We stress that the result could apply also to the mixed systems
discussed here, but we have not pursued the implementation.

An extension of the
residue to conformally symplectic systems in any dimension can be
given in terms of the distance of the spectrum of a periodic orbit
with period $q$ to the set $\{1,\lambda^q\}$, which represents the
spectrum of the periodic orbit when the map is integrable.
According to \cite{CCFL14}, let us denote by
$$
c(x):=x^{2d}+c_{2d-1}x^{2d-1}...+c_1 x+\lambda^{dq}\ ,\ \ \ c_j\in\real\ ,
$$
the characteristic polynomial of the derivative of the
map over the full cycle of the periodic orbit with period $q$. The coefficients $c_j$ are parameterized by the
spectral numbers $\chi_j$. Consider the space $\M_0=\torus^d\times B$ with $B\subset\real^d$
a ball around zero, endowed with the standard symplectic form
$\Omega_0=\sum_{j=1}^d dy_j\wedge dx_j$,
where $\u x\in\torus^d$, $\u y\in B$; let $f_0$ be the conformal symplectic map with respect to
$\Omega_0$ defined as
$$
f_0(\u y,\u x)=(\lambda \u y,\u x+\u\omega)
$$
with $\u\omega$ Diophantine. Then, the
characteristic polynomial associated to $f_0$ is given by
$$
(x-1)^d (x-\lambda^q)^d:= x^{2d}+c_{2d-1}^0 x^{2d-1}+...+c_1^0 x+\lambda^{dq}\ ,\ \ \ c_j^0\in\real\ .
$$
Finally, the residue $R$ can be defined as the distance between
the coefficients of the characteristic polynomials:
\beq{res4D}
R:=\sum_{j=1}^d |c_j-c_j^0|\ .
\eeq
We will try to construct the approximating periodic orbits also in the dissipative 4D case with
different conformal factors to verify at least numerically whether
their linear stability provides information on the breakdown
threshold of the torus (see Section~\ref{sec:greene4d}).
 {The implementation of the method in the 4D case is the following. We fix a rational approximant to the frequency of the torus.
For each approximant and for a fixed value of $\varepsilon$, we compute a corresponding periodic orbit
within the associated Arnold tongue. In principle, we should take
the supremum of the residue over all the periodic orbits within the Arnold tongue and we
should take periodic orbits far from the boundaries of the tongue. However, this goes beyond our computational capabilities and
we are limited to select one periodic orbit as far as possible from the boundaries of the tongue.
Then, we compute the residue as a function of $\varepsilon$; when the residue is regular,
there is evidence of the stability of the periodic orbits and hence of the existence of the torus (\cite{CCFL14})}.

\subsection{ {Periodic orbit approximants in the 2D case}}\label{sec:greene2d}
We show two examples of stable periodic orbits in Figure~\ref{fig:map2d} for the 2D dissipative standard map
with $\lambda=0.8$; the left panel refers to the approximant 610/987 to $\varpi$ and the right panel
refers to the approximant 619/820 to $s^{-1}$.

\begin{figure}[hbt!]
    \centering
    \includegraphics[width=5.cm]{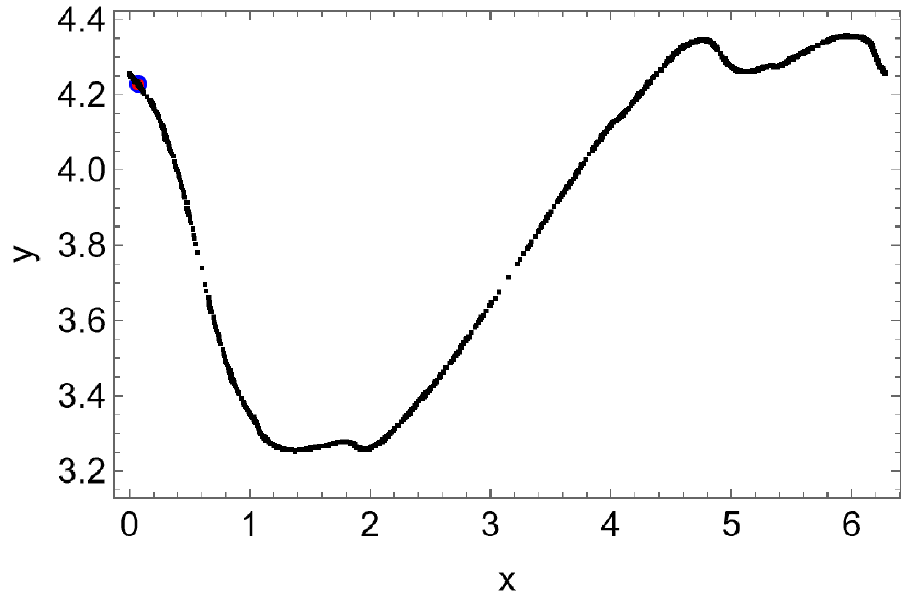}
    \includegraphics[width=5.cm]{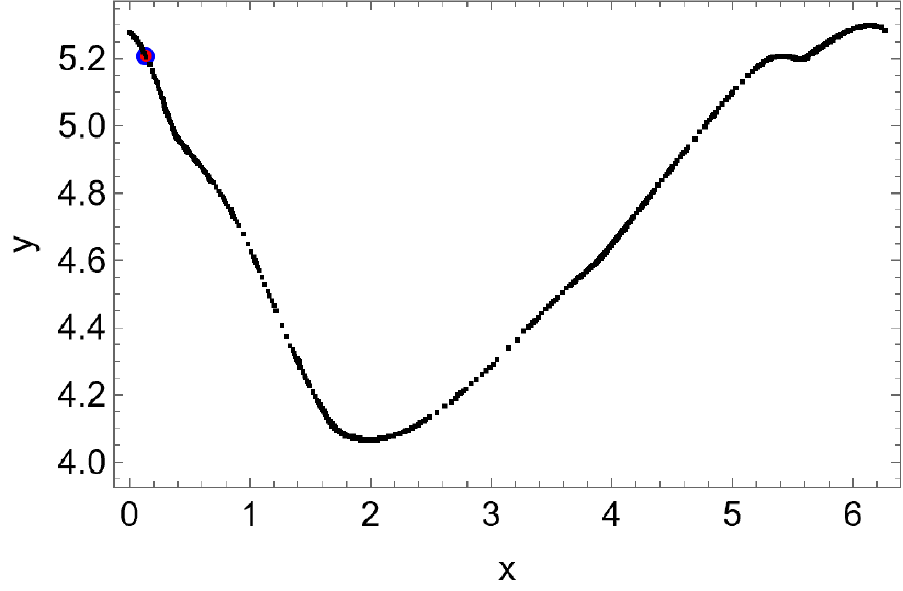}
\includegraphics[width=5.cm]{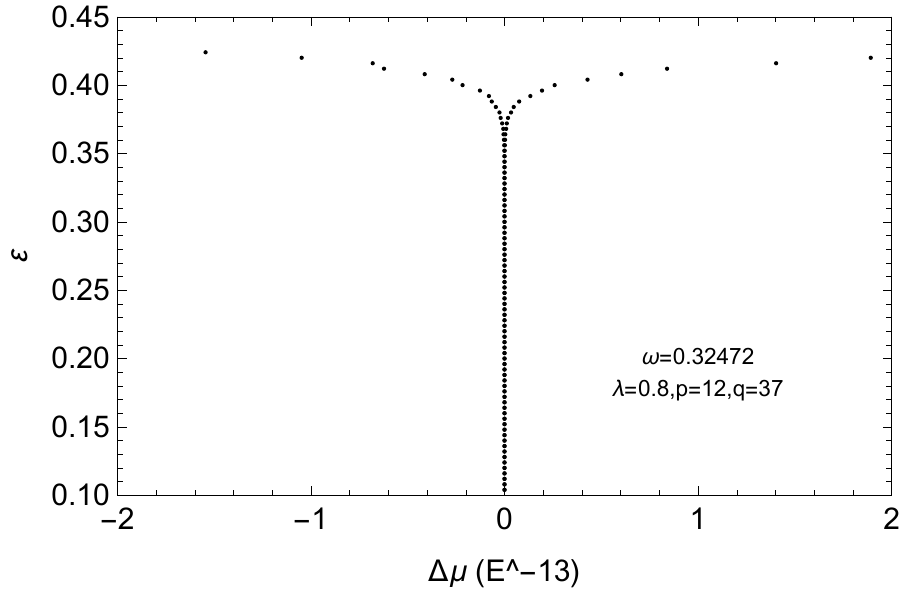}
    \caption{ {2D map \equ{DSM2} with $V(x)=\sin x$, $\lambda=0.8$. Stable periodic orbits with $\omega_1=610/987$, $\varepsilon=0.973$ (left)
and $\omega_1=619/820$, $\varepsilon=0.758$ (middle); we start the iteration of the map from the red point.}
 {In the right panel we show a numerical approximation of the Arnold tongue for $\omega=s^{-1}$, $\lambda=0.8$, $(p,q)=(12,37)$.}}
\label{fig:map2d}
\end{figure}

\begin{table*}
  \centering
\caption{ {Critical parameter and drift for stable periodic orbits
of the map \equ{DSM2} with $\lambda=0.8$, approximating to the
frequencies $\omega_1=\varpi$ (left) and $\omega_1=s^{-1}$
(right).}} \label{tab:phi2d}
\begin{tabular}{|c|c|c|c|c|c|c|c|c|}
  \hline
  $p$ & $q$ & $\varepsilon_c$  & $\mu$ && $p$ & $q$ & $\varepsilon_c$ & $\mu$\\
  \hline
2 & 3 & 1.384 & 0.82796 && 3 & 4 & 1.156 & 0.924791 \\
3 & 5 & 1.185 & 0.75105 && 37 & 49 & 0.787 & 0.935855 \\
5 & 8 & 1.123 & 0.78265 && 40 & 53 & 0.779 & 0.935884 \\
8 & 13 & 1.028 &  0.76804 && 77 & 102 & 0.774 & 0.935981 \\
13 & 21 & 1.004 & 0.77153 && 271 & 359 & 0.753 & 0.936199 \\
21 & 34 & 0.992 &  0.77043 && 619 & 820 & 0.758 & 0.936139 \\
34 & 55 & 0.984 & 0.77090 &&&&&\\
55 & 89 & 0.970 & 0.77090 &&&&&\\
89 & 44 & 0.946 & 0.77121 &&&&&\\
144 & 233 & 0.945 & 0.77119 &&&&&\\
233 & 377 & 0.975 & 0.77088 &&&&&\\
377 & 610 & 0.976 & 0.77086 &&&&&\\
610 & 987 & 0.973 &  0.77089 &&&&&\\
\hline
\end{tabular}
\end{table*}

 {
It is known that in the dissipative case the periodic orbits
can be found within an interval of the drift parameter, the Arnold tongue;
an example of a numerical approximation is given in Figure~\ref{fig:map2d}, right panel.
As shown in \cite{Arnold2} and \cite{Wenzel}, the width of the Arnold tongue scales as $\varepsilon^q$,
where $q$ is the period of the periodic orbit.}

The results of the extension of Greene method for $\varpi$
and $s^{-1}$ are presented in Table~\ref{tab:phi2d}, which shows
the drift $\mu$ and the maximum values of the critical parameter
$\varepsilon_c$ for which the periodic orbit with frequency $p/q$
given by the rational approximants to the irrational frequency is
stable. The values $\varepsilon_c=0.973$ for $\varpi$ and
$\varepsilon_c=0.758$ for $s^{-1}$ are consistent with the values
shown in Figure~\ref{fig:gold2dim} and \ref{fig:1os2dim} (upper
right plot) based on the computation of the poles of Pad\'e
approximants as well as with the results for $s^{-1}$ given in
Section~\ref{sec:newton} using Sobolev criterion.

\subsection{ {Periodic orbit approximants in the 4D case}}\label{sec:greene4d}
 {Let us denote by $\u f:\real^2\times\torus^2\rightarrow \real^2\times\torus^2$
the map \equ{SMS} with the potential in \equ{W}.}
Let $\widetilde{\u f}:\real^4\rightarrow\real^4$ be the lift of $\u f$ and let
$\u\omega_p=({p_1/q},{p_2/q})$ be a frequency vector
with $p_1,p_2\in\integer$, $q\in\integer\backslash\{0\}$. A
periodic orbit with frequency $\u\omega_p$ satisfies the condition
$$
\widetilde{\u f}^q(\u X)=\u X+(0,2\pi p_1,0,2\pi p_2)\
$$
for $\u X=(y,x,w,z)$. Searching for periodic orbits is equivalent
to finding the roots of the function \beq{eqG} \u G(\u
X):=\widetilde{\u f}^q(\u X)-\u X-(0,2\pi p_1,0,2\pi p_2)\ . \eeq
We recall that the frequency can be computed as  {in
Remark~\ref{rem:omega}. A heuristic method for finding periodic
orbits is presented in Appendix~\ref{app:po}};  {we stress that
the method is just based on experimental evidence and on an
a-posteriori corroboration that we effectively found the desired
periodic orbits. Unfortunately we are not aware of more efficient
(and possibly rigorously justified) methods to find periodic
orbits in the non-symplectic 4D case including drifts.}

The linear stability of the periodic orbits is calculated by computing the eigenvalues $\kappa_1, \kappa_2$ of the
product of the Jacobian of the map over the periodic orbit.
If $\max(|\kappa_1|,|\kappa_2|))>1$ the orbit is said to be unstable.
 {An example of the computation of the linear stability for some approximants to $\u \omega_s$ is given
in Table~\ref{tab:4dcoupl} in the dissipative case with $\lambda_1=0.8$, $\lambda_2=0.7$;
the convergence to a limit value is not evident and possibly larger approximants would be needed.}


 {The evidence of the existence of the Arnold tongues in 4D (see \cite{CCFL14}) is obtained
by taking a reference value of the drifts and initial conditions for a given periodic orbit, having fixed
the perturbing, coupling and dissipative parameters, and then by taking nearby random values of the drifts and initial conditions, which are used
as initial guess of the root finding method to find another periodic orbit of the same period. An example is given in
Figure \ref{fig:tongue} for the periodic orbit with $p_1=3$, $p_2=7$, $q=9$
(one of the first approximants to $\u\omega_s$). We
analyzed the conformally symplectic 4D map with $\lambda_1=\lambda_2=0.8$ and the coupling parameter
equal to $\gamma=0.01$. The graphs show the regions (and a zoom) of the drifts $\mu_1$, $\mu_2$ as a function of
the perturbing parameter $\varepsilon$, delimited by the blue
and red dots in which one can find a periodic orbit; as in the 2D case,
the results are consistent with the qualitative and theoretical
results given in \cite{Arnold2} and \cite{Wenzel}. Figure~\ref{fig:tongue} shows also
a 3D picture of the Arnold tongue (lower left panel).}

\begin{figure}[h]
\centering
\includegraphics[width=7cm]{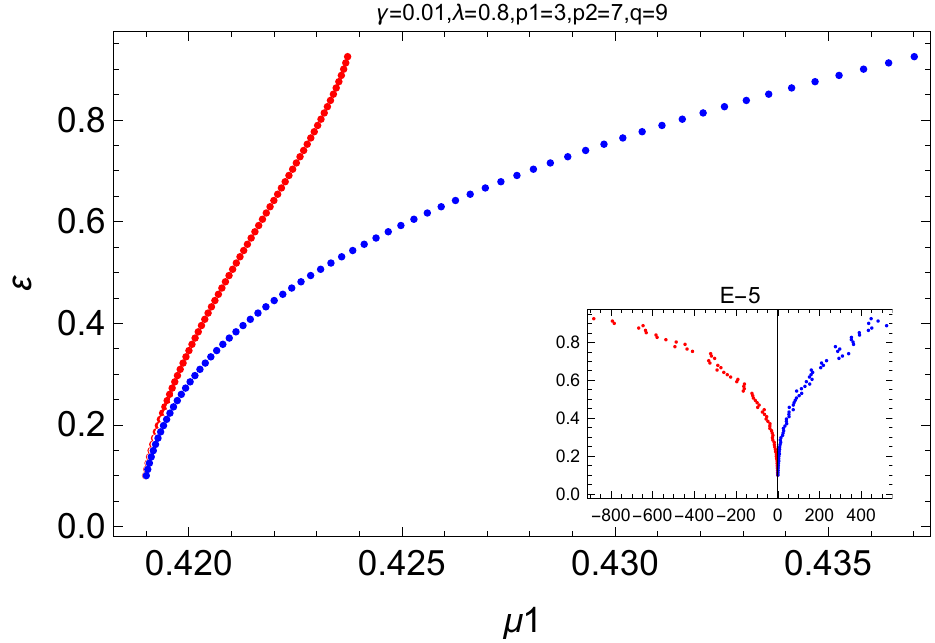}
\includegraphics[width=7cm]{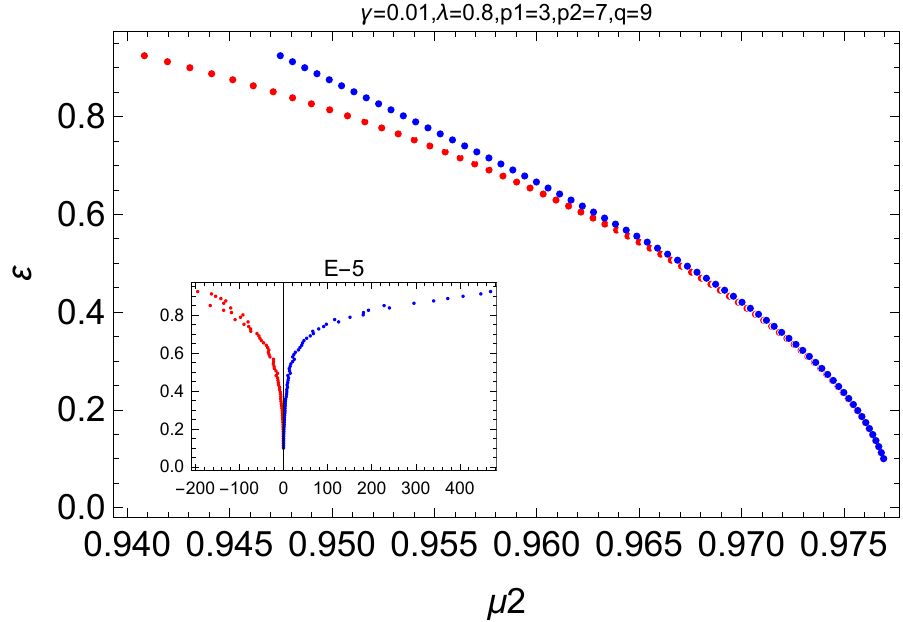}\\
\includegraphics[width=7cm]{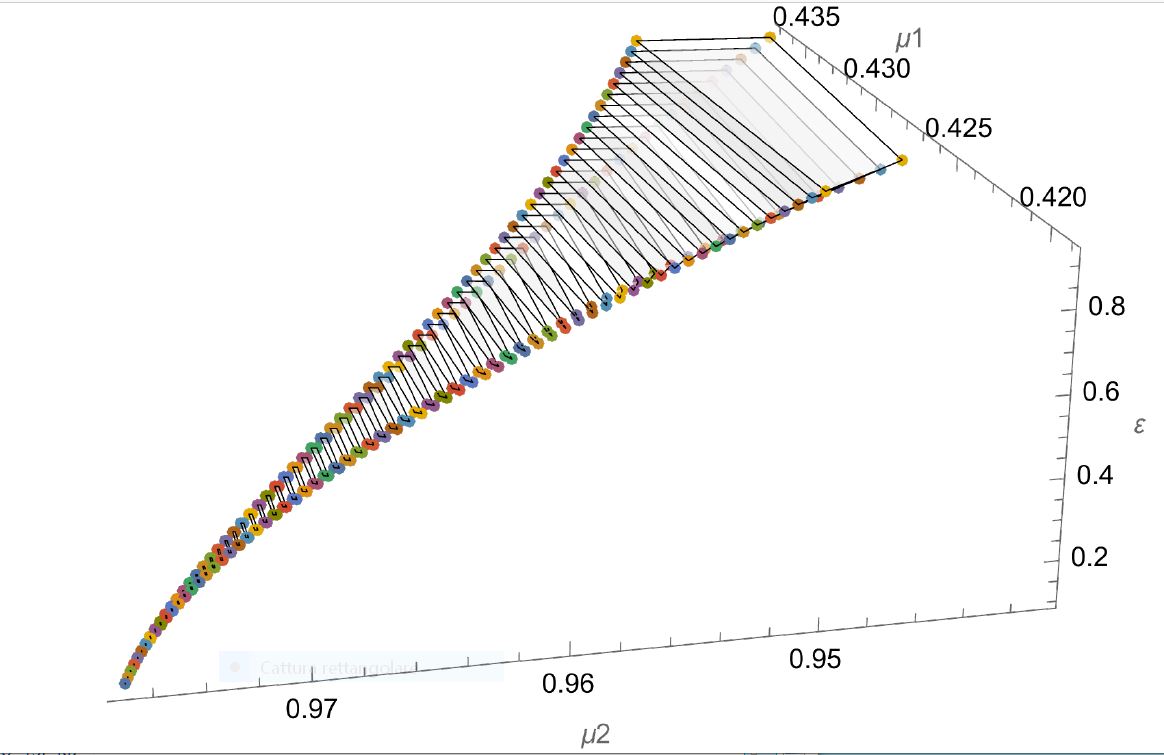}
\includegraphics[width=7cm]{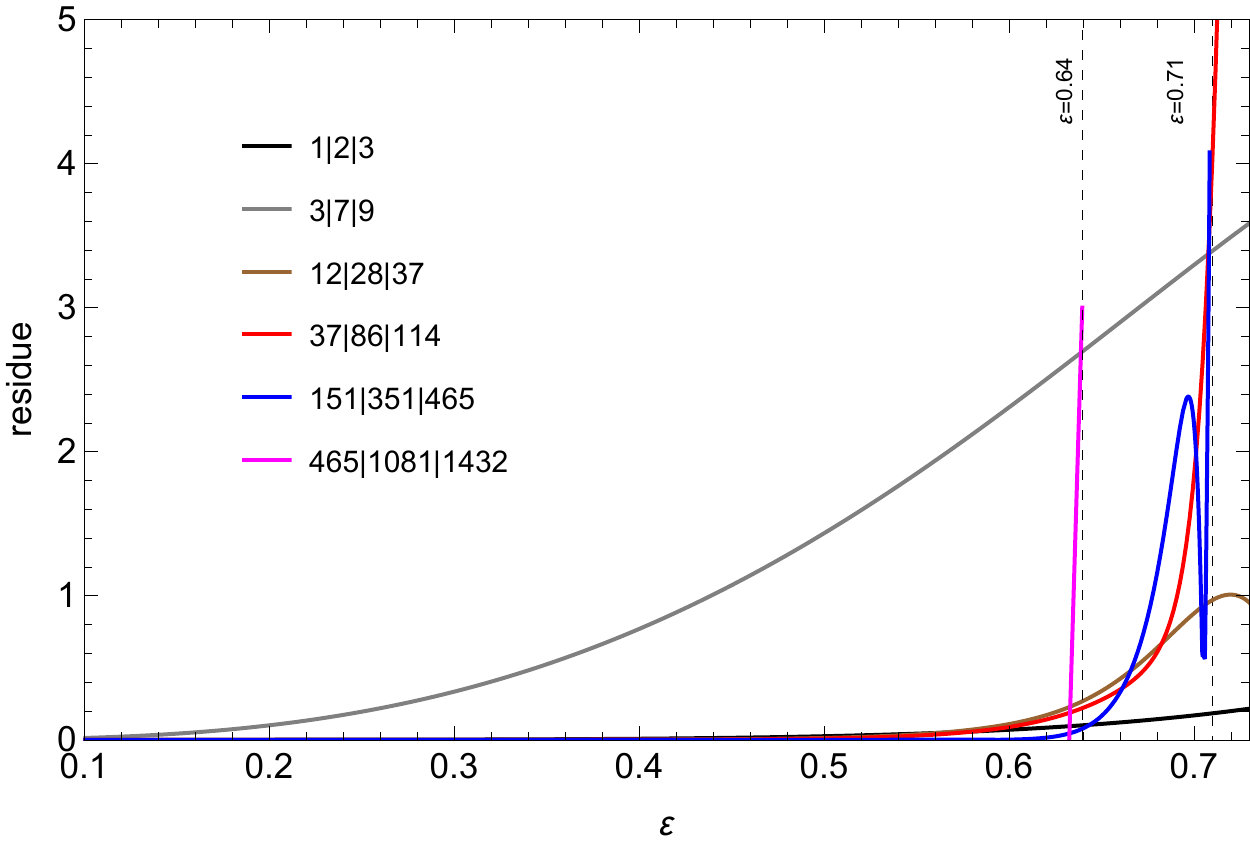}
\caption{ {Map \equ{SMS}-\equ{W} with $\lambda_1=\lambda_2=0.8$,
$\gamma=0.01$.
    Regions of the drifts $\mu_1$ (upper left), $\mu_2$ (upper right),
$\mu_1$ and $\mu_2$ (lower left) versus $\varepsilon$ for
$p_1=3$, $p_2=7$, $q=9$. The red and blue dots give the region within which one can find a
periodic orbit. Residues of the first few approximants to $\u \omega_s$ for increasing values of $\varepsilon$ (lower right).}}
\label{fig:tongue}
\end{figure}

\begin{table*}
  \centering
\caption{Parameters and initial conditions for the periodic orbits
approximating $\u \omega_s$ with frequency
$\u\omega=(p_1/q,p_2/q)$ for $\gamma=0.01$, $\lambda_1=0.8$,
$\lambda_2=0.7$.  {The values of $\varepsilon$ are those for which
the orbits are still linearly stable and just before they become
unstable.}} \label{tab:4dcoupl}
\begin{tabular}{|c|cccccccccc|}
\hline
& $p_1$ & $p_2$ & $q$ & $\varepsilon$  & $y$ & $x$ & $w$ & $z$ & $\mu_1$ & $\mu_2$ \\
\hline
 1 & 1 & 2 & 3 & 0.152 & 2.1414 & 3.0828 & 4.240 & 0.069 & 0.4191 & 0.8374
\\
\hline
 2 & 3 & 7 & 9 & 0.934 & 2.3460 & 3.0916 & 4.330 & 3.230 & 0.4238 & 0.9463
\\
 3 & 12 & 28 & 37 & 0.516 & 1.8524 & 6.2093 & 5.050 & 0.128 & 0.4097 &
0.9427 \\
 4 & 37 & 86 & 114 & 0.541 & 1.8433 & 6.2090 & 5.050 & 0.126 & 0.4098 &
0.9395 \\
 5 & 151 & 351 & 465 & 0.704 & 1.7718 & 6.2057 & 4.390 & 3.220 & 0.4078 &
0.9367 \\
 6 & 465 & 1081 & 1432 & 0.639 & 1.7992 & 6.2083 & 5.120 & 0.131 & 0.4090 &
0.9378 \\
\hline
\end{tabular}
\end{table*}

 {
We investigated further the breakdown threshold by computing the residue of the
approximating periodic orbits according to the definition given in \equ{res4D}.
Within the numerical ranges found for the Arnold tongues, we tried to select the
periodic orbits far from their boundaries.
The results for some approximants to $\u \omega_s$ are shown in the lower right panel
of Figure~\ref{fig:tongue}.}

Unfortunately, the implementation of this method
does  not allow to draw definite conclusions about the breakdown threshold,
although we notice a blow-up of the residue within the region found
to bound the domain of the Pad\'e approximants of Figure~\ref{fig:sob-norms-4d-omega-spiral}.



\begin{table*}[H]
  \centering
\caption{Parameters and initial conditions for the periodic orbits
approximating $\omega_1=p_1/q$ and $\omega_2=p_2/q$ for
$\gamma=0.01$, $\lambda_1=0.8$, $\lambda_2=0.8$. Values are
provided close to breakdown threshold in $\varepsilon$ where the
orbits are still linearly stable and just before they become
unstable (u).} \label{tab:4dcoupl}
\begin{tabular}{|c|cccccccccc|}
\hline
& $p_1$ & $p_2$ & $q$ & $\varepsilon$  & $y$ & $x$ & $w$ & $z$ & $\mu_1$ & $\mu_2$ \\
\hline
    1 & 4 & 5 & 6 & - & - & - & - & - & - & - \\
2 & 4 & 5 & 7 & 0.687 & 3.5559 & 3.1517 & 4.780 & 0.099 & 0.7175 & 0.8885 \\
3 & 5 & 6 & 8 & $>1$ & 4.3064 & 0.0812 & 4.230 & 3.220 & 0.7802 & 0.9205 \\
4 & 18 & 22 & 29 & 0.722 & 4.1431 & 0.0730 & 5.170 & 0.131 & 0.7764 & 0.9387 \\
5 & 63 & 77 & 102 & 0.726 & 4.1225 & 0.0718 & 5.160 & 0.130 & 0.7728 & 0.9365 \\
6 & 131 & 160 & 212 & - & - & - & - & - & - & - \\
7 & 411 & 502 & 665 & 0.719 & 4.1076 & 0.0669 & 4.380 & 3.230 & 0.7734 & 0.9365 \\
\hline
\end{tabular}
\end{table*}

 {\section{Conclusions}\label{sec:conclusions}}

 {
We have investigated the breakdown of rotational invariant tori in 2D and 4D standard maps, using
different formulations: symplectic, conformally symplectic, mixed, dissipative maps. We have implemented
three different methods: $(i)$ the computation of the Lindstedt series expansions, $(ii)$ the construction
of invariant tori through a Newton method, $(iii)$ the approximation of the tori through periodic orbits.}

 {
The Lindstedt series expansions to a given order allows one to estimate the radius of convergence and the domain
of analyticity of the tori through the computation of the Pad\'e approximants in the complex parameter plane.
The Newton method allows one to construct an approximation of the parameterization and the drift, whose Sobolev
norms provide an indication of the breakdown of the tori when the norms blow up. The construction of the periodic orbits
with frequencies approximating that of the invariant torus is at the basis of Greene method, according to
which the breakdown of the tori is related to the stability properties of the approximating
periodic orbits.}

 {
We stress that method $(i)$ can be implemented in all maps, method $(ii)$ in the formulation
given in \cite{CallejaCdlL2013} works only for symplectic and conformally symplectic systems,
though \cite{CH2} provides an extension to the general dissipative case with different conformal factors, method $(iii)$ has
partial rigorous justifications as shown in \cite{FalcoliniL92}, \cite{McKay92}, \cite{CCFL14}.}

 {
In the 2D cases, all three methods agree to a fairly good degree of accuracy. The domains of
analyticity are regular when the potential has only one harmonic and become flower-shaped or
irregular when the potential has 2 harmonics. Taking different values of the dissipative parameters, we noticed that
often the breakdown
thresholds are smaller as we approach the symplectic limit at which branch singularities become more
evident (\cite{Lla-Tom-94}, \cite{Lla-Tom-95}).}

 { In the 4D cases, the implementation of all methods requires
a bigger computational effort. Also in this case, we often noticed
a decrease of the threshold from dissipative to symplectic,
independently of the frequency as shown in Table~\ref{tab:omegas}.
Besides, the threshold decreases when the coupling parameter
increases, see Table~\ref{tab:gammas}. The computation of periodic
orbits in the 4D cases becomes particularly difficult, due to the
fact that one has a large set of unknowns, given by the four
initial conditions and the two drifts. In all non-symplectic
cases, one cannot rely on the existence of symmetry lines and one
needs to find the periodic orbits in their Arnold tongues, whose
structure certainly deserves a deeper investigation. Due to these
difficulties, our implementation of an extension of
Greene method in higher dimensions was inconclusive.}

\vskip.1in

\bf Acknowledgements. \rm We are grateful to  {Renato Calleja for
useful suggestions. We are deeply indebted with Rafael de la Llave
for many discussions, which helped us to improve this work. We
also thank the anonymous referee for many comments that greatly
contributed to improve the presentation of this work.}

\vskip.1in

\begin{appendix}

\section{ {Jacobi-Perron algorithm}}\label{app:JP}

For a 2D vector $\u \omega=(\omega_1,\omega_2)$, the Jacobi-Perron
algorithm (see \cite{Schweiger1990}) allows one to construct the integer sequences $p_{1,i}$,
$p_{2,i}$, $q_i$, such that $(p_{1,i}/q_i,p_{2,i}/q_i)$ are
rational approximants to $\u\omega$. For $d=2$, consider the map
$$
T(x,y)=\left(\frac{y}{x}-a_1,\frac{1}{x}-b_1\right) \ ,
$$
with
$$
a_1=a_1(x,y)=\left\lfloor\frac{y}{x}\right\rfloor \ ,
\quad b_1=b_1(x,y)=\left\lfloor\frac{1}{x}\right\rfloor \ ,
$$ and $a_j(x,y)=a_1(T^{j-1}(x,y))$, $b_j(x,y)=b_1(T^{j-1}(x,y))$.
According to \cite{Schweiger1990}, the approximants in terms of sequences
$p_{1,n}$, $p_{2,n}$, $q_n$ are defined by the matrix product
$$
\left(
\begin{array}{ccc}
 p_{1,n-2} & p_{1,n-1} & p_{1,n} \\
 p_{2,n-2} & p_{2,n-1} & p_{2,n} \\
 q_{n-2} & q_{n-1} & q_n \\
\end{array}
\right)=
\prod_{j=1}^n
\left(
\begin{array}{ccc}
 0 & 0 & 1 \\
 1 & 0 & a_j \\
 0 & 1 & b_j \\
\end{array}
\right)\ .
$$
We notice that
$$
\left(
\begin{array}{ccc}
    p_{1,n-1} & p_{1,n} & p_{1,n+1} \\
    p_{2,n-1} & p_{2,n} & p_{2,n+1} \\
    q_{n-1} & q_{n} & q_{n+1} \\
\end{array}
\right)=
\left(
\begin{array}{ccc}
    p_{1,n-2} & p_{1,n-1} & p_{1,n} \\
    p_{2,n-2} & p_{2,n-1} & p_{2,n} \\
    q_{n-2} & q_{n-1} & q_{n} \\
\end{array}
\right)
\left(
\begin{array}{ccc}
 0 & 0 & 1 \\
1 & 0 & a_{n+1} \\
0 & 1 & b_{n+1} \\
\end{array}
\right) \ .
$$
Taking only the $3rd$ column of the matrix product at the $(n+1)$-th step,
we get the vectorial recursive formula:
$$
\left(
\begin{array}{c}
p_{1,n+1} \\
p_{2,n+1} \\
q_{n+1} \\
\end{array}
\right)=
\left(
\begin{array}{c}
p_{1,n-2} \\
p_{2,n-2} \\
q_{n-2} \\
\end{array}
\right)+
\left(
\begin{array}{c}
p_{1,n-1} \\
p_{2,n-1} \\
q_{n-1} \\
\end{array}
\right)a_{n+1}+
\left(
\begin{array}{c}
p_{1,n} \\
p_{2,n} \\
q_{n} \\
\end{array}
\right)b_{n+1} \ .
$$
The recursion can be solved up to order $n>0$ using the starting values
(that leave the matrix product unchanged):
$$
\left(
\begin{array}{ccc}
    p_{1,-2} & p_{1,-1} & p_{1,0} \\
    p_{2,-2} & p_{2,-1} & p_{2,0} \\
    q_{-2} & q_{-1} & q_0 \\
\end{array}
\right) =
\left(
\begin{array}{ccc}
 1 & 0 & 0 \\
 0 & 1 & 0 \\
 0 & 0 & 1 \\
\end{array}
\right) \ .
$$

\section{Pad\'e approximants}\label{sec:Pade}
The analyticity domains of the Lindstedt series expansions of Section~\ref{sec:Lindstedt}
can be obtained through the computation of the Pad\'e approximants, that we briefly recall
as follows, referring to \cite{Bak-96} for further details. Given the series
$$
W(\ep)=\sum_{k=1}^\infty W_k \ep^k\ ,
$$
we define the Pad\'e approximants to $W$ of order $[M,N]$ for $M,N\in\nat$ as the functions
$\P_{M,N}(\ep)$, which are the ratio of the polynomials $Q_1^{(M)}(\ep)$, $Q_2^{(N)}(\ep)$ of degree, respectively, $M$ and $N$, say
$$
\P_{M,N}(\ep):= {{Q_1^{(M)}(\ep)}\over {Q_2^{(N)}(\ep)}}\ ,
$$
such that the Taylor expansion of $\P_{M,N}(\ep)$
coincides with the Taylor expansion of $W$ up
to order $M+N$:
$$
W(\ep)Q_2^{(N)}(\ep)-Q_1^{(M)}(\ep)=O(\ep^{M+N+1})\ .
$$
We can expand $Q_1^{(M)}$ and $Q_2^{(N)}$ as
$$
Q_1^{(M)}(\ep)=\sum_{j=0}^M Q_{1,j} \ep^j\ ,\qquad Q_2^{(N)}(\ep)=\sum_{j=0}^N Q_{2,j} \ep^j\ .
$$
Then, the function $\P_{M,N}(\ep)$ contains $M+N+1$ unknown coefficients, since we can
normalize $Q_{2,0}=1$.

The coefficients $Q_{1,j}$, $Q_{2,j}$ can be computed through the following recursive formulae:
\beqano
W_i+\sum_{j=1}^N W_{i-j}Q_{2,j}&=&0\ , \qquad M<i\leq M+N\ ,\nonumber\\
W_i+\sum_{j=1}^i W_{i-j}Q_{2,j}&=&Q_{1,i}\ , \qquad 0\leq i\leq M\ ,
\eeqano
where the first equation gives $Q_{2,j}$ and the second equation gives $Q_{1,j}$.

The analyticity domain of the function $W$ can then be obtained by computing the zeros of $Q_{2,N}$.
Typically, one considers diagonal Pad\'e approximants of order $[N,N]$. It is often necessary to eliminate fake zeros,
which can be distinguished from genuine ones for the fact that fake zeros disappear when the order of the
Pad\'e approximants changes or when the parameters are slightly changed (\cite{BCCF}).

\section{Algorithm of the Newton step}\label{apend:newton-algorithm}

In the algorithm below for the Newton step  {(see
\cite{CallejaCdlL2013})}, we use the following notation:
$\overline{B}$ means the average of $B$, while $(B)^o$ denotes the
zero average part, namely $(B)^o=B-\overline{B}$.

\begin{algorithm}\label{algorithm-newton}
Given ${\u K}:\torus ^2 \longrightarrow \torus^2 \times \real^2$, ${\u \mu}\in \real^2$, we perform the following computations:
\begin{itemize}
    \item[1)] ${\u E} \leftarrow {\u f}_{{\u \mu},\ep}\circ {\u K} - {\u K}\circ T_{\u \omega}$
    \item[2)] ${\alpha} \leftarrow D{\u K}$
    \item[3)] $N \leftarrow [{\alpha}^\top {\alpha} ]^{-1}$
    \item[4)] $M \leftarrow [{\alpha}, J^{-1}{\alpha} N]$
    \item[5)] $\beta \leftarrow M^{-1}\circ T_{\u \omega}$
    \item[6)] $\tilde{{\u E}} \leftarrow \beta {\u E} $
    \item[7)] $P \leftarrow {\alpha} N$ \\
    $\gamma \leftarrow {\alpha}^\top J^{-1} {\alpha}$\\
    $S \leftarrow (P\circ T_{\u \omega})^\top D{\u f}_{{\u \mu},\ep}\circ {\u K} J^{-1} P - \lambda (N\circ T_{\u \omega})^\top
    (\gamma\circ T_{\u \omega}) (N\circ T_{\u \omega})$ \\
    $\tilde{A} \leftarrow M^{-1}\circ T_{\u \omega} D_{\u \mu} {\u f}_{{\u \mu}, \ep}\circ {\u K}$
    \item[8)]  $({\u B}a)^0$ solves $\lambda ({\u B}a)^0 - ({\u B}a)^0\circ T_{\u \omega} = - (\tilde{{\u E}}_2)^0$\\
    $({ B}b)^0$ solves $\lambda ({ B}b)^0 - ({ B}b)^0\circ T_{\u \omega} = - (\tilde{A}_2)^0$
    \item[9)] Find $\overline{{\u W}}_2$, ${\u \sigma}$ solving $$ \left( \begin{array}{cc}
        \overline{S} & \overline{ S({B}b)^0} + \overline{\tilde{A}_1}   \\
        (\lambda -1)Id & \overline{\tilde{A}_2}
    \end{array}\right) \left(\begin{array}{c}
         \overline{{\u W}}_2  \\ {\u \sigma}
    \end{array}\right) = \left(\begin{array}{c}
         -\overline{\tilde{{\u E}}_1} - \overline{S({\u B}a)^0}  \\ -\overline{\tilde{{\u E}}_2}
    \end{array}\right) $$
    \item[10)] $({\u W}_2)^0  = ({\u B}a)^0 + ({B}b)^0 {\u \sigma}$
    \item[11)] ${\u W}_2 = ({\u W}_2)^0 + \overline{{\u W}}_2$
    \item[12)] $({\u W}_1)^0 $ solves $ ({\u W}_1)^0 - ({\u W}_1)^0\circ T_{\u \omega} = -(S{\u W}_2)^0- (\tilde{{\u E}}_1)^0 - (\tilde{A}_1)^0{\u \sigma} $
    \item[13)] ${\u K}\leftarrow {\u K} + M{\u W}$\\
    ${\u \mu} \leftarrow {\u \mu} + {\u \sigma}$.

    \end{itemize}

\end{algorithm}

\section{ {A method to find periodic orbits}}\label{app:po}
 {To solve \equ{eqG}, we proceed as follows. To have an initial guess, we start with the
uncoupled 4D case, which splits into a conservative and a dissipative map. We find
a solution for the two uncoupled systems for $\varepsilon$ small; in the dissipative case,
we compute a periodic orbit within the Arnold tongue. We go back
to the 4D coupled case, finding the solution for a given $\gamma$ not zero and using the solution  for
$\gamma=0$ as initial guess. We proceed to find a solution for larger values of $\varepsilon$ through
a continuation method in $\varepsilon$. A major limitation of this approach is that, within the Arnold tongue, we have not found a unique way to continue a periodic orbit.}

To be more specific, we consider  {the map \equ{SMS} with the
potential in} \equ{W} starting with $\gamma=0$, so that we obtain
two uncoupled systems;  {we first consider the case of} a
dissipative and a conservative 2D standard map. Denote by $\u
M_d:\real\times\torus\rightarrow\real\times\torus$ the  {2D
dissipative standard map in the variables $(y,x)$}. For a fixed
$\lambda$, we look for $(y_{0},x_{0},\mu)$ such that $\u M_d$
admits a periodic orbit with period $2\pi p_1/q$. From \equ{eqG}
with $\gamma=0$ we need to fulfill the condition \beq{eqGf}
(y_{0},x_{0})=\widetilde{\u M}_d^q(y_{0},x_{0};\mu)-(0,2\pi p_1)\
, \eeq where $\widetilde{\u M}_d:\real^2\rightarrow\real^2$
denotes the lift of $\u M_d$. Since we aim to find three unknowns
$(y_0,x_0,\mu)$ that solve \equ{eqGf}, the system is clearly
underdetermined,  {so we need an additional constraint on the
drift parameter.} To overcome this problem, we implemented the
following procedure, which we have heuristically found and which
turns out to be useful for our purposes.

Let us introduce $\mu'\in\real$ as $\mu' = y'(1-\lambda)-\varepsilon\sin(x')$, which is
not constant as $(y',x')$ vary. 
We define the mapping
$\widetilde{\u M}_e:\real^3\rightarrow\real^3$ as an extension of $\u M_d$ as follows:
\beqano
y'&=& \lambda y+\mu+\varepsilon \sin(x)\nonumber\\
x'&=& x+y' \nonumber\\
\mu'&=& y'(1-\lambda)-\varepsilon\sin(x') \ .
\eeqano
 {We are able to find periodic orbits of the extended map, if $(y_0, x_0, \mu_0)$ fulfill the following equation
\beq{eqGfp}
(y_{0},x_{0},\mu_0)=\widetilde{\u M}_e^q(y_{0},x_{0},\mu_0)-(0,2\pi p_1,0)\ .
\eeq
Since any triple $(y_{0},x_{0},\mu_0)$ that fulfills \eqref{eqGfp} also fulfills \eqref{eqGf}, with $\mu=\mu_0$, we found  a $2\pi p_1/q$ periodic orbit of the 2D dissipative map. We notice that the solution of equation \eqref{eqGfp} will generate two distinct different orbits for the original and extended mappings. Only for $\mu=\mu_0$ we will find the right periodic orbit for the original dissipative map. Since the method is heuristic, we further validate the result by checking that the frequency associated to the solution
$(y_{0},x_{0},\mu_0)$ of \eqref{eqGf} coincides with the first component of $\u\omega$.}

Let us denote the
 {2D conservative map in the variables $(w,z)$} as $\u M_c:\real\times\torus\rightarrow \real\times\torus$.
To get a periodic orbit with frequency equal to $2\pi p_2/q$, we need to solve
\beq{eqGg}
(w_{0},z_{0})=\widetilde{\u M}_c^q(w_{0},z_{0})-(0,2\pi p_2)\ ,
\eeq
where we have introduced the lift $\widetilde{\u M}_c:\real^2\rightarrow\real^2$.
The solution of \equ{eqGfp} and \equ{eqGg} provides an initial guess for \eqref{SMS} with $\gamma=0$.


In the 4D case with $\gamma\neq 0$, we proceed as follows. We assume to have a
starting point given by $\u X_0:= (y_{0},x_{0},w_{0},z_{0})$,
so that for $\gamma=0$ we obtain a periodic orbit for $\u M_d$ with period {\bf $p_1/q$}
starting from
$(y_{0},x_{0})$ and a periodic orbit for $\u M_c$ with period $p_2/q$ starting
from $(w_{0},z_{0})$.
We apply\footnote{This procedure is computationally
    expensive, meaning that: $(i)$ the root finding method might need a large number of iterations to converge to the requested accuracy;
    $(ii)$ the computations require accuracy, which means even more than 1000 digits in the iteration of the mapping;
    $(iii)$ if the root finding fails, one needs to find better initial conditions which is highly non trivial  {and it
        requires intervention};
    $(iv)$  the continuation method only works with a small step for the perturbing parameter.}
 {a continuation method in $\varepsilon$}. 
However, since the solution
$\u X^{(\gamma)}=(y^{(\gamma)},x^{(\gamma)},w^{(\gamma)},z^{(\gamma)})$ for $\gamma\neq0$ requires to adapt
$(y_{0},x_{0},\mu_0)$, as well as $(w_{0},z_{0})$ simultaneously at each step of the continuation method,
we are led to solve the system of equations
\beqa{eqGp}
y^{(\gamma)}= \tilde f_{1}^q(\u X^{(\gamma)},\mu^{(\gamma)}) &\quad&
x^{(\gamma)}= \tilde f_{2}^q(\u X^{(\gamma)},\mu^{(\gamma)}) - 2\pi p_1 \nonumber \\
w^{(\gamma)}= \tilde f_{3}^q(\u X^{(\gamma)},\mu^{(\gamma)}) &\quad&
z^{(\gamma)}= \tilde f_{4}^q(\u X^{(\gamma)},\mu^{(\gamma)}) - 2\pi p_2 \nonumber \\
\mu^{(\gamma)}= \tilde f_{e}^q(\u X^{(\gamma)},\mu^{(\gamma)}) &&
\eeqa
with respect to $\u X^{(\gamma)}$ and $\mu^{(\gamma)}$, having defined
$$
\tilde f_{e} := y(1 - \lambda) - \varepsilon\frac{\partial}{\partial x}W(x,z;\gamma) \ .
$$
Let us now consider the generalization to the dissipative 4D map
with $0<\lambda_{1,2}\leq1$, $\mu_{1,2}\in\mathbb R$ and $W$,
$\varepsilon$, $\gamma$ as in  {\equ{SMS}}. Using the same
approach outlined for the mixed case, we introduce the extension
of  {the dissipative 4D map} through the following additional
mapping equations:
$$
\mu_1'=
y'(1 - \lambda_1) - \varepsilon \frac{\partial}{\partial x}W(x',z';\gamma)\ ,\qquad
\mu_2'=
w'(1-\lambda_2) - \varepsilon \frac{\partial}{\partial z}W(x',z';\gamma) \ .
$$
Any periodic orbit of \equ{SMS}  {with frequency $(p_1/q,p_2/q)$}
needs to fulfill the conditions
$y_{q}^{(\gamma)}=y_{0}^{(\gamma)}$,
$x_{q}^{(\gamma)}=x_{0}^{(\gamma)}+2\pi p_1$,
$w_{q}^{(\gamma)}=w_{0}^{(\gamma)}$,
$z_{q}^{(\gamma)}=z_{0}^{(\gamma)}+2\pi p_2$ with
$\mu_{1,q}^{(\gamma)}=\mu_{1,0}^{(\gamma)}$,
$\mu_{2,q}^{(\gamma)}=\mu_{2,0}^{(\gamma)}$, where the subindex
$q$ means the $q$-th iterate. Hence, we are led to solve the
system of equations \equ{eqGp}, replacing the last line with
\beq{eqGDDp} \mu_1^{(\gamma)}= \tilde f_{e_1}^q(\u
X^{(\gamma)},\mu_1^{(\gamma)}) \ ,\qquad \mu_2^{(\gamma)}= \tilde
f_{e_2}^q(\u X^{(\gamma)},\mu_2^{(\gamma)}) \eeq with $(\mu_1,
\mu_2) =(\mu_1^{(\gamma)},\mu_2^{(\gamma)})$, having defined the
functions
$$
\tilde f_{e_1} :=
y(1 - \lambda_1) - \varepsilon \frac{\partial}{\partial x}W(x,z;\gamma)\ ,\qquad
\tilde f_{e_2} :=
w(1 - \lambda_2) - \varepsilon\frac{\partial}{\partial z}W(x,z;\gamma) \ .
$$
 {We notice that the drift parameters are dynamical variables
in the extended mapping. However, their initial values also solve the conditions for periodic orbits of the
original mapping (removing the last line in \equ{eqGp}). Thus, the initial values for the drift parameters
can also be taken as fixed parameters in the original mapping and provide the correct periodic orbit
also in the constant drift system.}

\end{appendix}

\bibliographystyle{abbrv}
\bibliography{mixedref}

\def\cprime{$'$} \def\cprime{$'$} \def\cprime{$'$} \def\cprime{$'$}
\begin{thebibliography}{10}

\bibitem{Arnold64}
V.~Arnold.
\newblock Instability of dynamical systems with several degrees of freedom.
\newblock {\em Sov. Math. Doklady}, 5:581--585, 1964.

\bibitem{Arnold63a}
V.~I. {A}rnol'd.
\newblock Proof of a theorem of {A}. {N}. {K}olmogorov on the invariance of
  quasi-periodic motions under small perturbations.
\newblock {\em Russian Math. Surveys}, 18(5):9--36, 1963.

\bibitem{Arnold}
V.~I. {A}rnol'd.
\newblock Small denominators. i. mappings of the circumference onto itself.
\newblock {\em Amer. Math. Soc. Transl. Ser. 2}, 46:213--284, 1965.
\newblock English translation: {\em Amer. Math. Sos. Transl. (2)}, 46:213--284,
  1965.

\bibitem{Arnold2}
V.~I. Arnold.
\newblock Remarks on perturbation theory for problems of {M}athieu type.
\newblock {\em Uspekhi Mat. Nauk}, 38(4(232)):189--203, 1983.

\bibitem{Bak-96}
G.~A. Baker, G.~A. Baker~Jr, P.~Graves-Morris, G.~Baker, and S.~S. Baker.
\newblock {\em Pad\'e Approximants: Encyclopedia of Mathematics and It's
  Applications, Vol. 59 George A. Baker, Jr., Peter Graves-Morris}, volume~59.
\newblock Cambridge University Press, 1996.

\bibitem{BCCF}
A.~Berretti, A.~Celletti, L.~Chierchia, and C.~Falcolini.
\newblock Natural boundaries for area-preserving twist maps.
\newblock {\em J. Statist. Phys.}, 66(5-6):1613--1630, 1992.

\bibitem{Bla-Lla-13}
T.~Blass and R.~de~la Llave.
\newblock The analyticity breakdown for {F}renkel-{K}ontorova models in
  quasi-periodic media: numerical explorations.
\newblock {\em J. Stat. Phys.}, 150(6):1183--1200, 2013.

\bibitem{BolltM}
E.~Bollt and J.~Meiss.
\newblock Breakup of invaruant tori for the four-dimensional semi-standard map.
\newblock {\em Phys. D}, 66:282--297, 1993.

\bibitem{Brent}
R.~P. Brent and H.~T. Kung.
\newblock Fast algorithms for manipulating formal power series.
\newblock {\em J. Assoc. Comput. Mach.}, 25(4):581--595, 1978.

\bibitem{BroerHS96}
H.~W. Broer, G.~B. Huitema, and M.~B. Sevryuk.
\newblock {\em Quasi-Periodic Motions in Families of Dynamical Systems. {\rm
  Order Amidst Chaos}}.
\newblock Springer-Verlag, Berlin, 1996.

\bibitem{BustamanteC1}
A.~P. Bustamante and R.~C. Calleja.
\newblock Computation of domains of analyticity for the dissipative standard
  map in the limit of small dissipation.
\newblock {\em Phys. D}, 395:15--23, 2019.

\bibitem{BustamanteC2}
A.~P. Bustamante and R.~C. Calleja.
\newblock Corrigendum and addendum to ``{C}omputation of domains of analyticity
  for the dissipative standard map in the limit of small dissipation''
  [{P}hysica {D} 395 (2019) 15-23].
\newblock {\em Phys. D}, 417:Paper No. 132837, 7, 2021.

\bibitem{CallejaC10}
R.~Calleja and A.~Celletti.
\newblock Breakdown of invariant attractors for the dissipative standard map.
\newblock {\em CHAOS}, 20(1):013121, 2010.

\bibitem{CallejaCdlL2013}
R.~Calleja, A.~Celletti, and R.~de~la Llave.
\newblock A {KAM} theory for conformally symplectic systems: efficient
  algorithms and their validation.
\newblock {\em J. Differential Equations}, 255(5):978--1049, 2013.

\bibitem{CCL22}
R.~Calleja, A.~Celletti, and de~la Llave~R.
\newblock {KAM} quasi-periodic solutions for the dissipative standard map.
\newblock {\em Comm. Nonl. Sc. Num. Sim.}, 106:106111, 2022.

\bibitem{CCGL20a}
R.~Calleja, A.~Celletti, J.~Gimeno, and R.~de~la Llave.
\newblock Efficient and accurate {KAM} tori construction for the dissipative
  spin-orbit problem using a map reduction.
\newblock {\em J. Nonlinear Sci.}, 32(1):Paper No. 4, 40, 2022.

\bibitem{CCGL20b}
R.~Calleja, A.~Celletti, J.~Gimeno, and R.~de~la Llave.
\newblock K{AM} quasi-periodic tori for the dissipative spin-orbit problem.
\newblock {\em Commun. Nonlinear Sci. Numer. Simul.}, 106:Paper No. 106099, 22,
  2022.

\bibitem{CallejaL10}
R.~Calleja and R.~de~la Llave.
\newblock A numerically accessible criterion for the breakdown of
  quasi-periodic solutions and its rigorous justification.
\newblock {\em Nonlinearity}, 23(9):2029--2058, 2010.

\bibitem{calleja2009-thesis}
R.~C. Calleja.
\newblock {\em Existence and persistence of invariant objects in dynamical
  systems and mathematical physics}.
\newblock Ph.D. thesis, The University of Texas at Austin, 2009.

\bibitem{CCFL14}
R.~C. Calleja, A.~Celletti, C.~Falcolini, and R.~de~la Llave.
\newblock An {E}xtension of {G}reene's {C}riterion for {C}onformally
  {S}ymplectic {S}ystems and a {P}artial {J}ustification.
\newblock {\em SIAM J. Math. Anal.}, 46(4):2350--2384, 2014.

\bibitem{CCGL20c}
R.~C. Calleja, A.~Celletti, J.~Gimeno, and R.~de~la Llave.
\newblock Accurate computations up to break-down of quasi-periodic attractors
  in the dissipative spin-orbit problem.
\newblock {\em Preprint}, 2022.

\bibitem{CH2}
M.~Canadell and A.~Haro.
\newblock Computation of quasi-periodic normally hyperbolic invariant tori:
  algorithms, numerical explorations and mechanisms of breakdown.
\newblock {\em J. Nonlinear Sci.}, 27(6):1829--1868, 2017.

\bibitem{CellettiC95}
A.~Celletti and L.~Chierchia.
\newblock A constructive theory of {L}agrangian tori and computer-assisted
  applications.
\newblock In {\em Dynamics Reported}, pages 60--129. Springer, Berlin, 1995.

\bibitem{CFL}
A.~Celletti, C.~Falcolini, and U.~Locatelli.
\newblock On the break-down threshold of invariant tori in four dimensional
  maps.
\newblock {\em Regular and Chaotic Dynamics}, 9(3):227--253, 2004.

\bibitem{Chirikov79}
B.~Chirikov.
\newblock A universal instability of many-dimensional oscillator systems.
\newblock {\em Phys. Rep.}, 52(5):264--379, 1979.

\bibitem{LlaveGJV05}
R.~de~la Llave, A.~Gonz{\'a}lez, {\`A}.~Jorba, and J.~Villanueva.
\newblock K{AM} theory without action-angle variables.
\newblock {\em Nonlinearity}, 18(2):855--895, 2005.

\bibitem{Lla-Tom-94}
R.~de~La~Llave and S.~Tompaidis.
\newblock Nature of singularities for analyticity domains of invariant curves.
\newblock {\em Physical Review Letters}, 73(11):1459, 1994.

\bibitem{Lla-Tom-95}
R.~De~La~Llave and S.~Tompaidis.
\newblock On the singularity structure of invariant curves of symplectic
  mappings.
\newblock {\em Chaos: An Interdisciplinary Journal of Nonlinear Science},
  5(1):227--237, 1995.

\bibitem{FalcoliniL92}
C.~Falcolini and R.~de~la Llave.
\newblock A rigorous partial justification of {G}reene's criterion.
\newblock {\em J. Statist. Phys.}, 67(3-4):609--643, 1992.

\bibitem{FoxM13}
A.~M. Fox and J.~D. Meiss.
\newblock Greene's residue criterion for the breakup of invariant tori of
  volume-preserving maps.
\newblock {\em Phys. D}, 243:45--63, 2013.

\bibitem{MeissFox}
A.~M. Fox and J.~D. Meiss.
\newblock Computing the conjugacy of invariant tori for volume-preserving maps.
\newblock {\em SIAM J. Appl. Dyn. Syst.}, 15(1):557--579, 2016.

\bibitem{Greene1979}
J.~Greene.
\newblock A method for determining a stochastic transition.
\newblock {\em Journal of Mathematical Physics}, 20(6):1183--1201, 1979.

\bibitem{GreeneP81}
J.~M. Greene and I.~C. Percival.
\newblock {H}amiltonian maps in the complex plane.
\newblock {\em Phys. D}, 3(3):530--548, 1981.

\bibitem{Haro2016}
A.~Haro, M.~Canadell, J.-L. Figueras, A.~Luque, and J.-M. Mondelo.
\newblock {\em The parameterization method for invariant manifolds}, volume 195
  of {\em Applied Mathematical Sciences}.
\newblock Springer, [Cham], 2016.
\newblock From rigorous results to effective computations.

\bibitem{Khinchin}
Y.~A. Khinchin.
\newblock Continued fractions.
\newblock {\em Dover Publications}, 1997.

\bibitem{KimOstlund}
S.-H. Kim and S.~\"{O}stlund.
\newblock Simultaneous rational approximations in the study of dynamical
  systems.
\newblock {\em Phys. Rev. A (3)}, 34(4):3426--3434, 1986.

\bibitem{Kolmogorov54}
A.~N. Kolmogorov.
\newblock On conservation of conditionally periodic motions for a small change
  in {H}amilton's function.
\newblock {\em Dokl. Akad. Nauk SSSR (N.S.)}, 98:527--530, 1954.
\newblock English translation in {\it Stochastic Behavior in Classical and
  Quantum Hamiltonian Systems (Volta Memorial Conf., Como, 1977)}, Lecture
  Notes in Phys., 93, pages 51--56. Springer, Berlin, 1979.

\bibitem{KM}
H.~Kook and J.~Meiss.
\newblock Periodic orbits for reversible, symplectic mappings.
\newblock {\em Phys. D}, 35:65--86, 1989.

\bibitem{McKay92}
R.~S. MacKay.
\newblock Greene's residue criterion.
\newblock {\em Nonlinearity}, 5(1):161--187, 1992.

\bibitem{MeissSander}
J.~D. Meiss and E.~Sander.
\newblock Birkhoff averages and the breakdown of invariant tori in
  volume-preserving maps.
\newblock {\em Phys. D}, 428:Paper No. 133048, 20, 2021.

\bibitem{Moser62}
J.~Moser.
\newblock On invariant curves of area-preserving mappings of an annulus.
\newblock {\em Nachr. Akad. Wiss. G\"ottingen Math.-Phys. Kl. II}, 1962:1--20,
  1962.

\bibitem{Moser67}
J.~Moser.
\newblock Convergent series expansions for quasi-periodic motions.
\newblock {\em Math. Ann.}, 169:136--176, 1967.

\bibitem{Schweiger1990}
F.~Schweiger.
\newblock On the invariant measure for {J}acobi-{P}erron algorithm.
\newblock {\em Math. Pannon.}, 1(2):91--106, 1990.

\bibitem{Tompaidis96bis}
S.~Tompaidis.
\newblock Approximation of invariant surfaces by periodic orbits in
  high-dimensional maps: some rigorous results.
\newblock {\em Experiment. Math.}, 5(3):197--209, 1996.

\bibitem{Tompaidis96}
S.~Tompaidis.
\newblock Numerical study of invariant sets of a quasiperiodic perturbation of
  a symplectic map.
\newblock {\em Experiment. Math.}, 5(3):211--230, 1996.

\bibitem{Wenzel}
W.~Wenzel and J.~C. Biham, O.
\newblock Periodic orbits in the dissipatuve standard map.
\newblock {\em Phys. Rev. A}, 43:6550--6557, 1991.

\end{thebibliography}

\end{document}